\newtheorem{theorem}{Theorem}[section] 
\newtheorem{lemma}[theorem]{Lemma}
\newtheorem{corollary}[theorem]{Corollary}
\newtheorem{proposition}[theorem]{Proposition}
\theoremstyle{definition}
\theoremstyle{remark}
\numberwithin{equation}{section}
\newcommand\numberthis{\addtocounter{equation}{1}\tag{\theequation}}
\begin{document}

\title{Wave Front Sets of Reductive Lie Group Representations II}

\author{Benjamin Harris}
\address{Department of Mathematics, Bard College at Simon's Rock, Great Barrington, Massachusetts 01230}
\email{Benjamin.Harris@simons-rock.edu}
\thanks{The author was an NSF VIGRE postdoc at Louisiana State University while this research was conducted.}



\subjclass[2010]{22E46, 22E45, 43A85}

\date{March 25, 2017}


\keywords{Wave Front Set, Singular Spectrum, Analytic Wave Front Set, Reductive Lie Group, Real Reductive Algebraic Group, Induced Representation, Tempered Representation, Branching Problem, Discrete Series, Reductive Homogeneous Space}

\begin{abstract}
In this paper it is shown that the wave front set of a direct integral of singular, irreducible representations of a real, reductive algebraic group is contained in the singular set. Combining this result with the results of the first paper in this series, the author obtains asymptotic results on the occurrence of tempered representations in induction and restriction problems for real, reductive algebraic groups.
\end{abstract}

\maketitle

\section{Introduction}
This is the second in a series of papers on wave front sets of reductive Lie group representations. The first was a joint paper with Hongyu He and Gestur \'{O}lafsson \cite{HHO}. 

Let $G$ be a Lie group, and let $(\pi,V)$ be a unitary representation of $G$. We define the \emph{wave front set} of $\pi$ to be
$$\operatorname{WF}(\pi)=\overline{\bigcup_{u,v\in V}\operatorname{WF}_e(\pi(g)u,v)}$$
and the \emph{singular spectrum} of $\pi$ to be
$$\operatorname{SS}(\pi)=\overline{\bigcup_{u,v\in V}\operatorname{SS}_e(\pi(g)u,v)}.$$
If $f$ is a continuous function on a Lie group $G$, $\operatorname{WF}_e(f)$ (resp. $\operatorname{SS}_e(f)$) denotes the piece of the wave front set (resp. singular spectrum) of $f$ in the fiber over the identity in $iT^*G$. If $\mathfrak{g}$ denotes the Lie algebra of $G$, note that both the wave front set and singular spectrum of $\pi$ are closed, invariant cones in $i\mathfrak{g}^*$. These ideas were first introduced by Kashiwara-Vergne \cite{KV} and Howe \cite{How}.

Let $G$ be a real, reductive algebraic group, and let $\widehat{G}$ denote the space of irreducible, unitary representations of $G$ equipped with the Fell topology. Let $\widehat{G}_{\text{temp}}^{\text{\ }\prime}\subset \widehat{G}$ be the open subset of irreducible, tempered representations of $G$ with regular infinitesimal character. Define
$$\widehat{G}_s=\widehat{G}-\widehat{G}_{\text{temp}}^{\text{\ }\prime}$$ and call $\widehat{G}_s$ the set of \emph{singular} irreducible, unitary representations of $G$. Similarly, let $i(\mathfrak{g}^*)'$ denote the set of regular, semisimple elements in $i\mathfrak{g}^*$. We analogously call $$i\mathfrak{g}_s^*=i\mathfrak{g}^*-i(\mathfrak{g}^*)'$$
the set of \emph{singular} elements in $i\mathfrak{g}^*$.

If $\pi$ is a unitary representation of $G$, then $\pi$ can be written as an integral of irreducible, unitary representations of $G$ against a positive measure $\mu$ on the unitary dual, $\widehat{G}$. The measure $\mu$ is unique up to an equivalence relation; in particular, the support of $\mu$ is a closed subset of $\widehat{G}$ in the Fell topology that depends only on $\pi$ and not on the choice of representative $\mu$ (see for instance Chapter 8 of \cite{Di} for this theory). The \emph{support} of $\pi$, denoted $\operatorname{supp}\pi$, is defined to be the support of $\mu$. Roughly, $\operatorname{supp}\pi$ is the set of irreducible, unitary representations contained in the decomposition of $\pi$ into irreducibles.

\begin{theorem} \label{maintheorem} Let $\pi$ be a unitary representation of a real, reductive algebraic group $G$, and suppose $\operatorname{supp}\pi\subset \widehat{G}_s$. Then 
$$\operatorname{WF}(\pi)\subset \operatorname{SS}(\pi)\subset i\mathfrak{g}_s^*.$$
\end{theorem}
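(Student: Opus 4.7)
The plan is to establish the two inclusions of Theorem \ref{maintheorem} independently. The inclusion $\operatorname{WF}(\pi) \subset \operatorname{SS}(\pi)$ is a general feature of distributions: the analytic wave front set always contains the $C^\infty$ wave front set, and since the definitions of $\operatorname{WF}(\pi)$ and $\operatorname{SS}(\pi)$ involve the same unions over matrix coefficients followed by closure, the fiber-wise inclusion transfers immediately. This step requires no input from representation theory.

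The substantive inclusion $\operatorname{SS}(\pi) \subset i\mathfrak{g}_s^*$ I would prove contrapositively. Since $i(\mathfrak{g}^*)'$ is open and $\operatorname{SS}(\pi)$ is a closed cone, it suffices to fix $\xi_0 \in i(\mathfrak{g}^*)'$, choose a closed conic neighborhood $\Gamma$ of $\xi_0$ contained in $i(\mathfrak{g}^*)'$, and prove $\Gamma \cap \operatorname{SS}(\pi) = \emptyset$. By definition this reduces to showing that every matrix coefficient $f(g) = (\pi(g)u,v)$ satisfies $\operatorname{SS}_e(f) \cap \Gamma = \emptyset$.

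Writing $\pi = \int_{\widehat{G}_s} \pi_\sigma \, d\mu(\sigma)$ gives $f = \int f_\sigma \, d\mu(\sigma)$ with $f_\sigma(g) = (\pi_\sigma(g) u_\sigma, v_\sigma)$. The two ingredients I would combine are: (i) for each singular irreducible $\sigma$, $\operatorname{SS}_e(f_\sigma) \subset i\mathfrak{g}_s^*$, which should follow from an analytic-category refinement of the Howe--Rossmann characterization of wave front sets of irreducible unitary representations, together with the fact that the coadjoint orbit associated to a singular representation cannot meet the regular semisimple locus; and (ii) an FBI-transform or analytic wave-packet estimate, uniform in $\sigma$, allowing the fiber-wise exponential decay of the FBI transform of $f_\sigma$ outside $\Gamma$ to persist after integration against the spectral measure $\mu$.

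The principal obstacle is item (ii). The singular spectrum is a much finer invariant than the smooth wave front set, and fiber-wise exponential estimates generally do not pass under integration without a uniform bound. The argument will most plausibly proceed via an auxiliary reduction through Harish-Chandra's Plancherel theorem to estimates on characters, where Rossmann's formula identifying $\Theta_\sigma$ on the regular semisimple locus with the Fourier transform of the canonical measure on an integral regular coadjoint orbit, together with its analytic refinements, should furnish uniform control; alternatively, uniform asymptotic estimates on matrix coefficients of irreducible unitary representations (via $K$-finiteness and leading exponents) may suffice. Once the uniformity is in hand, the remaining argument is a direct combination of (i) and (ii) with the definition of $\operatorname{SS}$.
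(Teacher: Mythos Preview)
Your strategy has a genuine gap in item (i) that undermines the whole approach. The bound $\operatorname{SS}_e(f_\sigma)\subset i\mathfrak{g}_s^*$ holds for \emph{every} irreducible unitary representation $\sigma$, not just singular ones: by Howe's and Rossmann's results the wave front set (and singular spectrum) of any irreducible is contained in the nilpotent cone, which lies entirely in $i\mathfrak{g}_s^*$. So the fiber-wise inclusion carries no information distinguishing $\widehat{G}_s$ from $\widehat{G}_{\text{temp}}^{\text{\ }\prime}$. Since integrals of regular tempered representations (for instance $L^2(G)$ itself) can have wave front set equal to all of $i\mathfrak{g}^*$, no amount of uniformity in (ii) will rescue an argument built on (i). The point is that $\operatorname{SS}$ of an integral is not controlled by the union of the $\operatorname{SS}$ of the fibers; what matters is how the Fourier-side supports of the fibers spread out as the parameter varies.

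Your fallback suggestion---reduce to characters and invoke Rossmann-type formulas---is the correct route and is essentially what the paper does, but two concrete ingredients are missing from your sketch. First, for a general (possibly non-tempered) irreducible $J(\Gamma)$ one does not get a single regular coadjoint orbit; the character is a signed integer combination of Fourier transforms along contours $X(\Gamma_1)=K\cdot(\mathcal{O}^{M_1}_{D(\Lambda_1)}+\nu_1+i(\mathfrak{g}/\mathfrak{p}_1)^*)$ in $\mathfrak{g}_{\mathbb{C}}^*$, where $\nu_1$ may have nonzero real part. Second, and this is the heart of the matter, the geometric fact that does the work is that the union of these contours over all $J(\Gamma)\in\widehat{G}_s$ has asymptotic cone contained in $i\mathfrak{g}_s^*$. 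This uses that the real parts $\operatorname{Re}\nu_1$ are uniformly bounded (Langlands) and that the imaginary parameters $d\lambda_1+\operatorname{Im}\nu_1$ stay within bounded distance of the singular hyperplanes in $i\mathfrak{h}_1^*$. Once this is established, a H\"ormander-type estimate (controlling $\operatorname{SS}_0$ of a distribution by the asymptotic cone of the support of its Fourier transform, made uniform over the family) combined with the reduction from matrix coefficients to integrated characters completes the proof. Your proposal identifies the right destination but not the vehicle: you must track the actual Fourier-side contours, not merely the singular spectra of the individual pieces.
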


In short, the wave front set of an integral of singular, irreducible representations is contained in the singular set. This Theorem will be proved in Sections 2 and 3.
\bigskip

When Theorem \ref{maintheorem} is combined with Theorem 1.1 of \cite{HHO}, Theorem 1.2 of \cite{HHO}, and Proposition 1.5 of \cite{How}, we obtain numerous new applications to induction and restriction problems. We begin with induction problems. If $V$ is a finite dimensional vector space, a subset $\mathcal{C}\subset V$ is a \emph{cone} in $V$ if $t\mathcal{C}=\mathcal{C}$ for every positive real number $t\in \mathbb{R}_{>0}$. If $S\subset V$ is any subset, define the \emph{asymptotic cone} of $S$ in $V$ to be
$$\operatorname{AC}(S)=\{\xi\in V|\ \xi\in \mathcal{C}\ \text{an\ open\ cone}\implies \mathcal{C}\cap S\ \text{unbounded}\}\cup \{0\}.$$
The asymptotic cone of any subset $S\subset V$ is always a closed cone in $V$.
If $\sigma\in \widehat{G}_{\text{temp}}^{\text{\ }\prime}$ is an irreducible, tempered representation of $G$ with regular infinitesimal character, then, following Duflo and Rossmann \cite{Du70}, \cite{Ro78}, we associate to $\sigma$ a single coadjoint orbit $\mathcal{O}_{\sigma}\subset i\mathfrak{g}^*$.

\begin{corollary} \label{inducedcor} Suppose $G$ is a real, reductive algebraic group, suppose $H\subset G$ is a closed subgroup, and suppose $(\tau,W)$ is a unitary representation of $H$. Let $\mathfrak{g}$ (resp. $\mathfrak{h}$) denote the Lie algebra of $G$ (resp. $H$), let $q:i\mathfrak{g}^*\rightarrow i\mathfrak{h}^*$ be the natural projection, and let $(\mathfrak{g}^*)'$ denote the set of regular, semisimple elements in $\mathfrak{g}^*$. Then  
$$\operatorname{AC}\left(\bigcup_{\substack{\sigma\in \operatorname{supp}\operatorname{Ind}_H^G\tau\\ \sigma\in \widehat{G}_{\text{temp}}^{\text{\ }\prime}}}\mathcal{O}_{\sigma}\right)\supset \overline{\operatorname{Ad}^*(G)\cdot q^{-1}(\operatorname{SS}(\tau))}\cap i(\mathfrak{g}^*)'.$$
\end{corollary}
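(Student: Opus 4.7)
The plan is to split the induced representation on the unitary dual, kill the singular part via Theorem~\ref{maintheorem}, and recognize the surviving tempered part through the Duflo--Rossmann orbit correspondence.

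Set $\pi = \operatorname{Ind}_H^G \tau$ and use the central decomposition on $\widehat{G}$ to write $\pi = \pi_s \oplus \pi_r$ with $\operatorname{supp}\pi_s \subset \widehat{G}_s$ and $\operatorname{supp}\pi_r \subset \widehat{G}_{\text{temp}}^{\text{\ }\prime}$. Every matrix coefficient of $\pi$ splits as $(\pi_s(g)u_s,v_s)+(\pi_r(g)u_r,v_r)$ under the corresponding decomposition of vectors, and both $\operatorname{SS}(\pi_s)$ and $\operatorname{SS}(\pi_r)$ are closed; taking the union over all $u,v$ and passing to closures gives $\operatorname{SS}(\pi)=\operatorname{SS}(\pi_s)\cup\operatorname{SS}(\pi_r)$. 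Theorem~\ref{maintheorem} forces $\operatorname{SS}(\pi_s)\subset i\mathfrak{g}_s^*$, so intersecting with the regular semisimple set yields
$$\operatorname{SS}(\pi)\cap i(\mathfrak{g}^*)' \subset \operatorname{SS}(\pi_r).$$

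Next I invoke Theorem~1.1 of \cite{HHO}, which bounds the singular spectrum of an induced representation from below by $\overline{\operatorname{Ad}^*(G)\cdot q^{-1}(\operatorname{SS}(\tau))}$. Intersecting this lower bound with $i(\mathfrak{g}^*)'$ and combining with the display above gives
$$\overline{\operatorname{Ad}^*(G)\cdot q^{-1}(\operatorname{SS}(\tau))}\cap i(\mathfrak{g}^*)' \subset \operatorname{SS}(\pi_r).$$
Finally, apply Proposition~1.5 of \cite{How} to $\pi_r$: for each irreducible $\sigma$ in $\operatorname{supp}\pi_r$, the Rossmann--Howe identity gives $\operatorname{WF}(\sigma)=\operatorname{AC}(\mathcal{O}_\sigma)$, and its direct-integral form yields $\operatorname{SS}(\pi_r)\subset \overline{\bigcup_{\sigma\in\operatorname{supp}\pi_r}\operatorname{AC}(\mathcal{O}_\sigma)}\subset \operatorname{AC}\bigl(\bigcup_{\sigma}\mathcal{O}_\sigma\bigr)$, since the asymptotic cone of a union dominates the closed union of asymptotic cones. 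Chaining the three inclusions yields the corollary.

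The main obstacle is pinning down the ``integrated'' form of Howe's Proposition~1.5: namely, that for a direct integral of tempered irreducibles with regular infinitesimal character, the singular spectrum lies inside the asymptotic cone of the union of the associated coadjoint orbits. This requires the measurable $\sigma$-dependence of $\mathcal{O}_\sigma$ and an interchange of $\operatorname{SS}$ computation with integration against $\mu$, both of which should be handled by Howe's original machinery together with the orbit-method conventions of Duflo and Rossmann used throughout the paper. The decomposition $\operatorname{SS}(\pi)=\operatorname{SS}(\pi_s)\cup\operatorname{SS}(\pi_r)$ itself is a routine matrix-coefficient computation, but care is needed because in general the singular spectrum of a direct integral is not merely the union of the singular spectra of its fibers.
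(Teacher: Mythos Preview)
Your argument is essentially the paper's own proof: split $\pi$ over $\widehat{G}_s$ and $\widehat{G}_{\text{temp}}^{\text{\ }\prime}$, use the matrix-coefficient decomposition to write $\operatorname{SS}(\pi)$ as the union of the two singular spectra (the paper cites Proposition~1.3 of \cite{How} here), kill the singular part on $i(\mathfrak{g}^*)'$ via Theorem~\ref{maintheorem}, feed in the lower bound from Theorem~1.1 of \cite{HHO}, and then pass to the asymptotic cone of orbits.

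The only real discrepancy is your final step. You invoke ``Proposition~1.5 of \cite{How}'' and then flag as an obstacle the passage from individual $\sigma$'s to the direct integral. In fact Proposition~1.5 of \cite{How} is a statement about \emph{restriction} (it gives $\operatorname{WF}(\pi|_H)\supset \overline{q(\operatorname{WF}(\pi))}$) and plays no role in the induction corollary. The statement you actually need---that for a direct integral of irreducible tempered representations with regular infinitesimal character one has $\operatorname{SS}(\pi_r)\subset \operatorname{AC}\bigl(\bigcup_{\sigma\in\operatorname{supp}\pi_r}\mathcal{O}_\sigma\bigr)$---is precisely Theorem~1.2 of \cite{HHO}, already proved in the first paper of the series. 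So your ``main obstacle'' is not an obstacle at all: just cite Theorem~1.2 of \cite{HHO} in place of Howe's Proposition~1.5 and the proof is complete, exactly as in the paper.
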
 

Since $\operatorname{WF}(\tau)\subset \operatorname{SS}(\tau)$, the same statement holds with $\operatorname{SS}(\tau)$ replaced by $\operatorname{WF}(\tau)$. This statement shows the existence of families of tempered representations in many induction problems. Let us look at a special case. Let $X$ be a homogeneous space for $G$ with a non-zero invariant density. Whenever $x\in X$ is a point, we have a surjection
$G\rightarrow X$ by $g\mapsto g\cdot x$. These maps gives rise to surjective maps on tangent spaces which pull back to a family of injective maps on cotangent spaces
$$iT_x^*X\hookrightarrow iT^*_eG\cong i\mathfrak{g}^*$$
for every $x\in X$. We will use these maps to identify $iT_x^*X$ as a subset of $i\mathfrak{g}^*$ for every $x\in X$. 

\begin{corollary} \label{L^2cor} Let $G$ be a real, reductive algebraic group, and let $X$ be a homogeneous space for $G$ with a non-zero invariant density. Then 
$$\operatorname{AC}\left(\bigcup_{\substack{\sigma\in \operatorname{supp}L^2(X)\\ \sigma\in \widehat{G}_{\text{temp}}^{\text{\ }\prime}}}\mathcal{O}_{\sigma}\right)\supset \overline{\bigcup_{x\in X} iT_x^*X}\cap i(\mathfrak{g}^*)'.$$
\end{corollary}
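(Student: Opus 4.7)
The plan is to deduce Corollary \ref{L^2cor} as a direct special case of Corollary \ref{inducedcor} applied to the trivial representation. I would first fix a base point $x_0\in X$, let $H=\operatorname{Stab}_G(x_0)$, so that $H$ is a closed subgroup and $X\cong G/H$. Since $X$ carries a $G$-invariant measure, there is a unitary isomorphism of $G$-representations $L^2(X)\cong \operatorname{Ind}_H^G\mathbbm{1}$, where $\mathbbm{1}$ is the trivial one-dimensional representation of $H$. In particular $\operatorname{supp} L^2(X)=\operatorname{supp}\operatorname{Ind}_H^G\mathbbm{1}$, so the two left-hand sides already match.

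Next I would compute $\operatorname{SS}(\mathbbm{1})$. The matrix coefficients $h\mapsto \langle \mathbbm{1}(h)u,v\rangle$ are constant functions on $H$, and constant functions are real-analytic, so $\operatorname{SS}_e$ of each matrix coefficient is the trivial fiber $\{0\}\subset i\mathfrak{h}^*$. Hence $\operatorname{SS}(\mathbbm{1})=\{0\}$, and therefore
\[
q^{-1}(\operatorname{SS}(\mathbbm{1}))=q^{-1}(\{0\})=i\mathfrak{h}^{\perp},
\]
the annihilator of $\mathfrak{h}$ in $i\mathfrak{g}^*$.

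Then I would translate the right-hand side of Corollary \ref{L^2cor} into the form appearing in Corollary \ref{inducedcor}. Under the injection $iT_{x_0}^*X\hookrightarrow i\mathfrak{g}^*$ defined in the excerpt, the image of $iT_{x_0}^*X$ is precisely $i\mathfrak{h}^{\perp}$, since the differential at $e$ of the orbit map $g\mapsto g\cdot x_0$ has kernel $\mathfrak{h}$. For a general point $x=g\cdot x_0$, the identity $(g'\mapsto g'\cdot x)=(g'\mapsto (g'g)\cdot x_0)$ together with left translation shows that the embedding $iT_x^*X\hookrightarrow i\mathfrak{g}^*$ has image $\operatorname{Ad}^*(g)\cdot i\mathfrak{h}^{\perp}$. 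Taking the union over $x\in X$ yields
\[
\bigcup_{x\in X} iT_x^*X \;=\; \operatorname{Ad}^*(G)\cdot i\mathfrak{h}^{\perp} \;=\; \operatorname{Ad}^*(G)\cdot q^{-1}(\operatorname{SS}(\mathbbm{1})).
\]
Closing up and intersecting with $i(\mathfrak{g}^*)'$ on both sides, the right-hand side of Corollary \ref{inducedcor} becomes the right-hand side of Corollary \ref{L^2cor}, and the corollary follows.

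I do not expect a serious obstacle: the argument is essentially a translation between the $G/H$-picture and the $\operatorname{Ind}_H^G$-picture. The only point that takes a moment to verify is that the embedding $iT_x^*X\hookrightarrow i\mathfrak{g}^*$ transforms by $\operatorname{Ad}^*$ as $x$ varies, but this is a direct consequence of the chain rule applied to the orbit map at different base points, combined with the equivariance of left translation.
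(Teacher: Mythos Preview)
Your proposal is correct and follows essentially the same approach as the paper: apply Corollary \ref{inducedcor} with $\tau=\mathbbm{1}$, note that $\operatorname{SS}(\mathbbm{1})=\{0\}$ so $q^{-1}(\operatorname{SS}(\mathbbm{1}))=i(\mathfrak{g}/\mathfrak{h})^*$, and then identify $\bigcup_{x\in X}iT_x^*X$ with $\operatorname{Ad}^*(G)\cdot i(\mathfrak{g}/\mathfrak{h})^*$ via the $\operatorname{Ad}^*$-equivariance of the cotangent embeddings. The only notational difference is that you write $i\mathfrak{h}^{\perp}$ where the paper writes $i(\mathfrak{g}/\mathfrak{h})^*$.
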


Suppose $G=\operatorname{Sp}(2n,\mathbb{R})$ is the symplectic group of invertible linear transformations preserving the standard symplectic form on $\mathbb{R}^{2n}$, and embed $\operatorname{GL}(n,\mathbb{R})$ into $\operatorname{Sp}(2n,\mathbb{R})$ by
\[A\mapsto \left(\begin{matrix} A & 0\\ 0 & A^{-1}\end{matrix}\right).\]
Suppose $\mathfrak{p}=(p_1,\ldots,p_k)$ and $\mathfrak{q}=(q_1,\ldots,q_l)$ are sequences of nonnegative integers with $\sum p_i+\sum q_j\leq n$, and let 
$$G_{p,q}=\prod_i \operatorname{GL}(p_i,\mathbb{R})\times \prod_j \operatorname{GL}(q_j,\mathbb{Z})$$
be the corresponding diagonal subgroup of $\operatorname{GL}(n,\mathbb{R})$ and $\operatorname{Sp}(2n,\mathbb{R})$. Let
$$X_{\mathfrak{p},\mathfrak{q}}=\operatorname{Sp}(2n,\mathbb{R})/G_{\mathfrak{p},\mathfrak{q}}$$
be the corresponding homogeneous space. Then (see Section 4) a calculation shows that the right hand side of Corollary \ref{L^2cor} contains all of the regular, elliptic elements in $i\mathfrak{g}^*$ when $X=X_{\mathfrak{p},\mathfrak{q}}$. Recall that the Harish-Chandra discrete series of $G=\operatorname{Sp}(2n,\mathbb{R})$ fall into a finite number of families depending on conjugacy classes of Weyl chambers (see \cite{HC66} for the original reference or page 310 of \cite{Kn86} for an exposition). If $\mathcal{F}$ is one of these families, then we deduce that there are infinitely many distinct $\sigma\in \mathcal{F}$ for which
$$\operatorname{Hom}_G(\sigma,L^2(X_{\mathfrak{p},\mathfrak{q}}))\neq \{0\}$$
for every $\mathfrak{p}$ and $\mathfrak{q}$. In particular, $L^2(X_{\mathfrak{p},\mathfrak{q}})$ contains infinitely many distinct Harish-Chandra discrete series of $G=\operatorname{Sp}(2n,\mathbb{R})$ for every $\mathfrak{p}$ and $\mathfrak{q}$. In Section 4 we will show how to deduce these results from Corollary \ref{L^2cor}, and we will give additional families of examples.

Next, we turn our attention to restriction problems.

\begin{corollary} \label{restcor} Suppose $\pi$ is a unitary representation of a Lie group $G$, and suppose $H\subset G$ is a closed, real, reductive algebraic subgroup. Let $\mathfrak{g}$ (resp. $\mathfrak{h}$) denote the Lie algebra of $G$ (resp. $H$), let
$$q:i\mathfrak{g}^*\rightarrow i\mathfrak{h}^*$$
denote the pullback of the inclusion of Lie algebras, and let $i(\mathfrak{h}^*)'$ denote the set of regular, semisimple elements in $i\mathfrak{h}^*$. Then 
$$\operatorname{AC}\left(\bigcup_{\substack{\sigma\in \operatorname{supp}\pi|_H\\ \sigma\in \widehat{H}_{\text{temp}}^{\text{\ }\prime}}}\mathcal{O}_{\sigma}\right)\supset \overline{q(\operatorname{SS}(\pi))}\cap (\mathfrak{h}^*)'.$$
\end{corollary}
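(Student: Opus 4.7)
The plan is to reduce the statement to Theorem \ref{maintheorem} combined with the two auxiliary results cited in the introduction (Proposition 1.5 of \cite{How} and Theorem 1.1/1.2 of \cite{HHO}), using the decomposition of $\pi|_H$ according to singular versus tempered-regular spectral support. First, I would write $\pi|_H \cong \pi_s \oplus \pi_t$, where $\pi_s$ is a direct integral supported on $\widehat{H}_s$ and $\pi_t$ is a direct integral supported on $\widehat{H}_{\text{temp}}^{\text{\ }\prime}$. Such a splitting exists because $\widehat{H}_{\text{temp}}^{\text{\ }\prime}$ is open, hence Borel, in $\widehat{H}$, so the decomposition measure $\mu$ splits into its restrictions to these two sets, giving two genuine unitary $H$-subrepresentations of $\pi|_H$.

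Next, I would apply Theorem \ref{maintheorem} to $\pi_s$ (viewed as a unitary representation of the real reductive algebraic group $H$), obtaining
\[
\operatorname{SS}(\pi_s) \subset i\mathfrak{h}_s^*, \qquad \text{so} \qquad \operatorname{SS}(\pi_s)\cap i(\mathfrak{h}^*)' = \emptyset.
\]
Combined with the obvious $\operatorname{SS}(\pi|_H)\subset \operatorname{SS}(\pi_s)\cup \operatorname{SS}(\pi_t)$, this shows that in the regular-semisimple part of $i\mathfrak{h}^*$ all of the singular spectrum of $\pi|_H$ comes from $\pi_t$.

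The third step is to feed in information from above. Proposition 1.5 of \cite{How}, applied to the inclusion of the closed subgroup $H\hookrightarrow G$, identifies the relevant matrix coefficients of $\pi|_H$ with restrictions of matrix coefficients of $\pi$, yielding
\[
q(\operatorname{SS}(\pi)) \subset \operatorname{SS}(\pi|_H).
\]
Since $\operatorname{SS}(\pi|_H)$ is closed, we may take closure on the left. Intersecting with $i(\mathfrak{h}^*)'$ and combining with the previous step gives
\[
\overline{q(\operatorname{SS}(\pi))} \cap i(\mathfrak{h}^*)' \subset \operatorname{SS}(\pi_t).
\]
Finally, Theorem 1.1 (or 1.2) of \cite{HHO}, applied to the direct integral $\pi_t$ of tempered irreducibles with regular infinitesimal character, gives
\[
\operatorname{SS}(\pi_t) \subset \operatorname{AC}\Bigl(\bigcup_{\sigma\in \operatorname{supp}\pi_t} \mathcal{O}_\sigma\Bigr),
\]
and since $\operatorname{supp}\pi_t = \operatorname{supp}(\pi|_H)\cap \widehat{H}_{\text{temp}}^{\text{\ }\prime}$, the corollary follows by chaining the inclusions.

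The main obstacle I anticipate is not in the overall bookkeeping but in confirming that Proposition 1.5 of \cite{How} really does furnish the containment $q(\operatorname{SS}(\pi))\subset \operatorname{SS}(\pi|_H)$ in the stated direction (as opposed to the a~priori one-sided Hörmander inequality one gets from restricting individual matrix coefficients). A secondary technical point is to verify that the measurable splitting $\pi|_H = \pi_s\oplus \pi_t$ is compatible with the direct-integral theory invoked in the proof of Theorem \ref{maintheorem}, so that the theorem can be applied to $\pi_s$ as a bona fide unitary $H$-representation. Both should be resolved by carefully unpacking the cited results, but they are the nontrivial pieces in an otherwise formal argument.
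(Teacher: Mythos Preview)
Your proposal is correct and follows essentially the same route as the paper: decompose $\pi|_H$ into its $\widehat{H}_{\text{temp}}^{\text{\ }\prime}$-part and its $\widehat{H}_s$-part, apply Theorem~\ref{maintheorem} to the singular piece, use Proposition~1.5 of \cite{How} (and its singular-spectrum analogue) for the lower bound on $\operatorname{SS}(\pi|_H)$, and finish with Theorem~1.2 of \cite{HHO}. The paper makes explicit that the union property $\operatorname{SS}(\pi|_H)\subset \operatorname{SS}(\pi_s)\cup\operatorname{SS}(\pi_t)$ is Proposition~1.3 of \cite{How}, and it confirms (as you anticipated) that the singular-spectrum version of Proposition~1.5 of \cite{How} requires checking the proof rather than just the statement; your second concern about the measurable splitting is not an issue, since Theorem~\ref{maintheorem} applies to any unitary $H$-representation with support in $\widehat{H}_s$.
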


Again, one can replace $\operatorname{SS}(\pi)$ by the smaller set $\operatorname{WF}(\pi)$ in the above Corollary. Let us consider an example. Suppose $G=\operatorname{GL}(2n,\mathbb{R})$, $H=\operatorname{SO}(n,n)$, and $\pi$ is a Stein complementary series representation of $G=\operatorname{GL}(2n,\mathbb{R})$ (see \cite{St67} for the original paper introducing Stein complementary series; see \cite{Vo86} to understand how they fit into the full unitary dual of $\operatorname{GL}(2n,\mathbb{R})$). We show in Section 5 that in this case
$$\overline{q(\operatorname{WF}(\pi))}= i\mathfrak{h}^*.$$ 
This means that the tempered part of the decomposition of $\pi|_H$ into irreducibles is ``asymptotically dense'' in $\widehat{H}_{\text{temp}}$. In particular, if $n$ is even, then $\operatorname{SO}(n,n)$ has Harish-Chandra discrete series representations and we deduce that there are infinitely many distinct Harish-Chandra discrete series representations $\sigma$ of $\operatorname{SO}(n,n)$ for which
$$\operatorname{Hom}_{\operatorname{SO}(n,n)}(\sigma,\pi|_H)\neq \{0\}.$$

We finish this introduction with two remarks. First, Corollary \ref{inducedcor}, Corollary \ref{L^2cor}, and Corollary \ref{restcor} are all in the spirit of the orbit method. For an introduction to this philosophy, the author recommends Kirillov's book \cite{Kir} and Vergne's expository article \cite{Ve83}.

The second remark is that all of the results in this piece should hold when $G$ is a reductive Lie group of Harish-Chandra class. The author states and proves them only for real, reductive algebraic groups only because many of the key results on irreducible, unitary representations have only been written down in this degree of generality.

\section{On Characters and Contours}

Let $G$ be a real, reductive algebraic group. We borrow some notation from Section 6 of \cite{ALTV}. A \emph{Langlands parameter} for $G$ is a triple $\Gamma=(H,\gamma,R_{i\mathbb{R}}^+)$ where $H\subset G$ is a Cartan subgroup with Lie algebra $\mathfrak{h}$, $\gamma$ is a level one character of the $\rho_{\text{abs}}$ double cover of $H$ (see Section 5 of \cite{ALTV} for an explanation), and $R_{i\mathbb{R}}^+$ is a choice of positive roots among the set of imaginary roots for $H$ in $\mathfrak{g}$ for which $d\gamma\in \mathfrak{h}_{\mathbb{C}}^*$ is weakly dominant. This triple must satisfy a couple of other technical assumptions (see Theorem 6.1 of \cite{ALTV}). 

To each Langlands parameter, one associates a standard representation $I(\Gamma)$, which is a finite length, possibly nonunitary representation of $G$ on a Hilbert space and a Langlands quotient $J(\Gamma)$, which is an irreducible, possibly nonunitary representation of $G$ on a Hilbert space. If $\Gamma$ and $\Gamma_1$ are conjugate under the action of $G$, then $I(\Gamma)$ (resp. $J(\Gamma)$) is isomorphic to $I(\Gamma_1)$ (resp. $J(\Gamma_1)$) as a representation of $G$. In addition, every irreducible, nonunitary representation can be realized as $J(\Gamma)$ for some $\Gamma$, and the $\Gamma$ in this realization is unique up to conjugacy (see \cite{La} for the original reference and Section 6 of \cite{ALTV}, Chapter 6 of \cite{Vo81}, and Chapters 8 and 14 of \cite{Kn86} for expositions). In order to construct $I(\Gamma)$ from $\Gamma$, we must break up $\Gamma$ into a discrete piece and a continuous piece.

Following definition 6.5 of \cite{ALTV}, we say that $\Lambda=(T,\lambda,R_{i\mathbb{R}}^+)$ is a \emph{discrete Langlands parameter} for $G$ if $T$ is the maximal compact subgroup of a Cartan subgroup in $H\subset G$ with Lie algebra $\mathfrak{t}$, $\lambda$ is a level one character of the $\rho_{i\mathbb{R}}$ double cover of the $T$ (see Section 5 of \cite{ALTV} for a definition), and $R_{i\mathbb{R}}^+$ is a choice of positive roots for the set of imaginary roots of $H$ in $\mathfrak{g}$ for which $d\lambda\in i\mathfrak{t}^*$ is weakly dominant. This triple must in addition satisfy a couple of technical assumptions (see Definition 6.5 of \cite{ALTV}). Similarly, a \emph{continuous parameter} for $\Lambda$ is a pair $(A,\nu)$ where $TA=H$ is a Cartan subgroup in $G$ and $\nu$ is a (possibly nonunitary) character of $A$. In addition, $(A,\nu)$ must satisfy a technical assumption (see Definition 6.5 of \cite{ALTV}). 

If $\Gamma$ is a Langlands parameter for $G$, then we may decompose $\Gamma$ into a discrete Langlands parameter $\Lambda$ and a continuous parameter $(A,\nu)$. If $MA$ is the Langlands decomposition of $Z_G(A)$, the centralizer of $A$ in $G$, then we may associate a limit of Harish-Chandra discrete series representation of $M$, $D(\Lambda)$, to $\Lambda$ (see Section 9 of \cite{ALTV} or page 460 of \cite{Kn86}). One can then tensor with the (possibly nonunitary) representation $\nu$ of $A$, and choose a parabolic $P=MAN$ making the real part of $\nu$ weakly dominant for the weights of $\mathfrak{a}$, the Lie algebra of $A$, in $\mathfrak{n}$, the Lie algebra of $N$. Finally, we extend $D(\Lambda)\otimes \nu$ trivially on $N$ to $P$ and we induce from $P$ to $G$. This induced representation is $I(\Gamma)$ (see Section 6 of \cite{ALTV}).

The representation $J(\Gamma)$ is the unique irreducible quotient of the representation $I(\Gamma)$. If $\Gamma=(\Lambda,\nu)$, then we will write $I(\Lambda,\nu)$ (resp. $J(\Lambda,\nu)$) for $I(\Gamma)$ (resp. $J(\Gamma)$). 
\bigskip

Let $G_{\mathbb{C}}$ denote the complexification of the real, reductive algebraic group $G$. If $\Gamma$ is a Langlands parameter, then we write $\mathcal{O}_{\Gamma}^{G_{\mathbb{C}}}$ for the complex coadjoint orbit in $\mathfrak{g}_{\mathbb{C}}^*$ through $d\gamma\in \mathfrak{h}_{\mathbb{C}}^*$. Recall that $\mathcal{O}_{\Gamma}^{G_{\mathbb{C}}}$ is the infinitesimal character of both $I(\Gamma)$ and $J(\Gamma)$. We note that for a fixed Langlands parameter $\Gamma$, there are finitely many Langlands parameters $\Gamma_1$ (up to $G$-conjugacy) for which $\mathcal{O}_{\Gamma}^{G_{\mathbb{C}}}=\mathcal{O}_{\Gamma_1}^{G_{\mathbb{C}}}$.

Let $\Theta_{I(\Gamma)}$ denote the Harish-Chandra character of $I(\Gamma)$, and let $\Theta(\Gamma)$ denote the Harish-Chandra character of $J(\Gamma)$ (see \cite{HC54}, \cite{HC56}, \cite{HC65} for the original references and chapter 10 of \cite{Kn86} for an exposition). These are generalized functions on $G$. If $\Gamma$ is a Langlands parameter, then we may write 
$$\Theta(\Gamma)=\sum_{\Gamma_1} M_{\Gamma_1,\Gamma} \Theta_{I(\Gamma_1)}$$
in a unique way as a finite sum with integer coefficients (see \cite{HC54}, \cite{La} for the original references and Chapter 10 of \cite{Kn86}, Section 15 of \cite{ALTV} for expositions). 
Recall $M_{\Gamma_1,\Gamma}\neq 0$ implies $\mathcal{O}_{\Gamma}^{G_{\mathbb{C}}}=\mathcal{O}_{\Gamma_1}^{G_{\mathbb{C}}}$. 
Define the Lie algebra analogue of the character of $I(\Gamma)$ (resp. $J(\Gamma)$) to be
$$\theta_{I(\Gamma)}=j_G^{1/2}\exp^*\Theta_{I(\Gamma)}\ (\text{resp.}\ \theta(\Gamma)=j_G^{1/2}\exp^*\Theta(\Gamma))$$
where $j_G^{1/2}$ is the unique analytic square root of the Jacobian of the exponential map normalized so that $j_G^{1/2}(0)=1$. Then $\theta_{I(\Gamma)}$ and $\theta(\Gamma)$ are invariant eigendistributions on $\mathfrak{g}$ \cite{HC65}, and we obtain
$$\theta(\Gamma)=\sum_{\Gamma_1} M_{\Gamma_1,\Gamma} \theta_{I(\Gamma_1)}.$$

\bigskip
Next, we associate to every standard representation, $I(\Gamma)$, a contour, $X(\Gamma)$, utilizing the work of Duflo and Rossmann \cite{Du70}, \cite{Ro78}, \cite{Ro80}, \cite{Ro84}. We define this contour as follows. First, break $\Gamma=(H,\gamma,R_{i\mathbb{R}}^+)$ into a discrete Langlands parameter $\Lambda=(T,\lambda,R_{i\mathbb{R}}^+)$ and a continuous Langlands parameter $(A,\nu)$. Let $MA=Z_G(A)$ be the Langlands decomposition of the centralizer of $A$ in $G$. Utilizing Rossmann's character formulas \cite{Ro78}, \cite{Ro80}, we associate a finite union of regular, coadjoint $M$ orbits $\mathcal{O}^M_{D(\Lambda)}\subset i\mathfrak{m}^*$ to the discrete Langlands parameter $\Lambda$. Each of these coadjoint orbits will contain $M\cdot d\lambda$ in its closure, and this union will be the single coadjoint orbit $M\cdot d\lambda$ whenever $d\lambda$ is regular. If $P=MAN$ is the parabolic subgroup we used to construct $I(\Gamma)$ (so the real part of $\nu$ is weakly dominant for the weights of $\mathfrak{a}$ in $\mathfrak{n}$) and $\mathfrak{p}$ is the Lie algebra of $P$, then we associate to $I(\Gamma)$ the contour
$$X(\Gamma)=K\cdot (\mathcal{O}_{D(\Lambda)}^M+\nu+i(\mathfrak{g}/\mathfrak{p})^*)$$
in $\mathfrak{g}_{\mathbb{C}}^*$. Here $K\subset G$ is a maximal compact subgroup of $G$, which can be written as the fixed point set of an involution of $G$ that leaves $H$ stable. We note that when $\nu$ is purely imaginary, $I(\Gamma)$ is a tempered representation and $X(\Gamma)$ is simply the union of coadjoint orbits $G\cdot (\mathcal{O}_{D(\Lambda)}^M+\nu)\subset i\mathfrak{g}^*$ associated to $I(\Gamma)$ by Duflo and Rossmann \cite{Du70}, \cite{Ro78}, \cite{Ro80}. 

The contour $X(\Gamma)$ comes with a natural analytic density; we will often abuse notation and write $X(\Gamma)$ for both the contour and the density. To define this density, we first recall the definition of the canonical density on a coadjoint $M$ orbit $\mathcal{O}^M_{\xi}=M\cdot \xi\subset i\mathfrak{m}^*$. There is a natural symplectic $2$ form on $\mathcal{O}^M_{\xi}$ defined by 
$$\omega_{\eta}(\operatorname{ad}_X^*\eta,\operatorname{ad}_Y^*\eta)=\eta([X,Y])$$
for every $\eta\in \mathcal{O}^M_{\xi}$. If $\dim \mathcal{O}^M_{\xi}=2m$, then we may define a top dimensional form on $\mathcal{O}^M_{\xi}$ by $$\frac{\omega^{\wedge m}}{(2\pi)^mm!}.$$
The absolute value of this top dimensional form is the required canonical density on our coadjoint orbit $\mathcal{O}^M_{\xi}$. Now, $\mathcal{O}_{D(\Lambda)}^M$ is a finite union of coadjoint orbits so we may give it the corresponding finite union of canonical densities. Translating, we have a density on $\mathcal{O}_{D(\Lambda)}^M+\nu$. 

Next, we have an analytic map with compact fibers
$$K\times_{K\cap M} (\mathcal{O}^M_{D(\Lambda)}+\nu+i(\mathfrak{g}/\mathfrak{p})^*)\longrightarrow X(\Gamma).$$
Thus, we may give an analytic density on the left and push it forward to an analytic density on $X(\Gamma)$. To do this, we note that we have an isomorphism $\mathfrak{k}/(\mathfrak{k}\cap \mathfrak{m})\cong \mathfrak{g}/\mathfrak{p}$. Thus, we have a natural pairing
$$\mathfrak{k}/(\mathfrak{k}\cap \mathfrak{m})\times i(\mathfrak{g}/\mathfrak{p})^*\rightarrow \mathbb{C}$$ and a corresponding natural pairing of top dimensional alternating tensors on these vector spaces
$$\Lambda^{\text{top}}(\mathfrak{k}/(\mathfrak{k}\cap \mathfrak{m}))^*\times \Lambda^{\text{top}}i(\mathfrak{g}/\mathfrak{p}) \rightarrow \mathbb{C}.$$
Choose top dimensional alternating tensors on these spaces that pair to a square root of $-1$, and call them $\alpha$ and $\beta$. Then extend $\alpha$ to an invariant top dimensional form on $K/(K\cap M)$ and extend $\beta$ to a translation invariant top dimensional form on $i(\mathfrak{g}/\mathfrak{p})^*$. Now, define a density on 
$$K\times_{K\cap M} (\mathcal{O}^M_{D(\Lambda)}+\nu+i(\mathfrak{g}/\mathfrak{p})^*)$$
by tensoring the absolute value of $\alpha$ with the canonical densities on the coadjoint orbits and tensoring again with the absolute value of $\beta$. This density is independent of the above choice of a square root of $-1$ since we took absolute values. Now, push this density forward to an analytic density on $X(\Gamma)$; this is the density we will work with from now on.

Next, to each irreducible representation $J(\Gamma)$, we wish to associate a contour $C(\Gamma)$ defined by
$$C(\Gamma)=\sum_{\Gamma_1} M_{\Gamma_1,\Gamma}X(\Gamma_1).$$
We also denote by $C(\Gamma)$ the associated linear combination of densities.

Write $\mathcal{D}_c^{\infty}(\mathfrak{g})$ for the space of smooth, compactly supported densities on $\mathfrak{g}$. If $\omega\in \mathcal{D}_c^{\infty}(\mathfrak{g})$, then we define the Fourier transform of $\omega$ to be
$$\mathcal{F}[\omega](\xi)=\int_{\mathfrak{g}} e^{\langle \xi,X\rangle} d\omega_{X}.$$
We note that $\mathcal{F}[\omega]$ is a Schwartz function on $i\mathfrak{g}^*$ and, by the Paley-Wiener Theorem, a complex analytic function on $\mathfrak{g}_{\mathbb{C}}^*$. The Fourier transform realizes the key relationship between characters and contours.

\begin{proposition}[Duflo, Rossmann]\label{prop:Duflo_Rossmann} If $\omega$ is a smooth, compactly supported density on $\mathfrak{g}$, then for every Langlands parameter $\Gamma$, we have the equality
$$\langle \theta(\Gamma),\omega\rangle=\langle C(\Gamma),\mathcal{F}[\omega]\rangle.$$
\end{proposition}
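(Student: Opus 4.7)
The plan is to reduce to the case of standard representations and then invoke the induced-character formula together with Rossmann's orbit-integral formula on the Levi $M$. Concretely, because both sides of the asserted equality depend linearly on the distribution/contour and because we have the parallel formulas
$$\theta_{J(\Gamma)}=\sum_{\Gamma_1}M_{\Gamma_1,\Gamma}\,\theta_{I(\Gamma_1)}\quad\text{and}\quad C(\Gamma)=\sum_{\Gamma_1}M_{\Gamma_1,\Gamma}\,X(\Gamma_1),$$
it is enough to prove $\langle\theta_{I(\Gamma)},\omega\rangle=\langle X(\Gamma),\mathcal{F}[\omega]\rangle$ for every Langlands parameter $\Gamma$.

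For standard $I(\Gamma)=\operatorname{Ind}_P^G(D(\Lambda)\otimes\nu)$, I would then apply the classical induced-character formula of Harish-Chandra. After transferring everything to the Lie algebra via the map $j_G^{1/2}\exp^*$ (noting that $j_G^{1/2}$ is precisely tailored to make the induced character computation compatible with the analogous construction for $M$, via the identity $j_G^{1/2}|_{\mathfrak{m}+\mathfrak{a}}=j_M^{1/2}j_A^{1/2}\cdot(\text{Jacobian of conjugation by }K/(K\cap M))$), the pairing $\langle\theta_{I(\Gamma)},\omega\rangle$ becomes an integral of $\theta_{D(\Lambda)\otimes\nu}$ against a density on $\mathfrak{m}+\mathfrak{a}$ obtained from $\omega$ by integration over $K/(K\cap M)$ and over the nilradical direction $\mathfrak{n}$. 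The integration over $\mathfrak{n}$, on the Fourier side, is exactly restriction from $\mathfrak{g}^*$ to the annihilator, i.e.\ to $(\mathfrak{m}+\mathfrak{a})^*$, and corresponds to the $i(\mathfrak{g}/\mathfrak{p})^*$ factor appearing in the definition of $X(\Gamma)$.

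Having reduced to $M$, I invoke Rossmann's orbit-integral formula for limits of discrete series: Duflo--Rossmann show that
$$\theta_{D(\Lambda)}(Y)=\int_{\mathcal{O}^M_{D(\Lambda)}}e^{\langle\xi,Y\rangle}\,d\beta_{\mathcal{O}^M_{D(\Lambda)}}(\xi)$$
in the distributional sense on $\mathfrak{m}$, where $\beta$ denotes the canonical Liouville density. Tensoring with $\nu$ translates this orbit to $\mathcal{O}^M_{D(\Lambda)}+\nu$, and the $K$-integration combined with the $i(\mathfrak{g}/\mathfrak{p})^*$-integration then reassembles, via the Fubini-style density-bookkeeping spelled out above using the pairing $\Lambda^{\text{top}}(\mathfrak{k}/(\mathfrak{k}\cap\mathfrak{m}))^*\times\Lambda^{\text{top}}i(\mathfrak{g}/\mathfrak{p})\to\mathbb{C}$, into integration of $\mathcal{F}[\omega]$ against the pushforward density on $X(\Gamma)=K\cdot(\mathcal{O}^M_{D(\Lambda)}+\nu+i(\mathfrak{g}/\mathfrak{p})^*)$. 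This is exactly the right-hand side $\langle X(\Gamma),\mathcal{F}[\omega]\rangle$.

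The main obstacle is the careful bookkeeping of densities and normalizations rather than any single hard estimate: one must verify that the normalization of the Liouville form $\omega^{\wedge m}/((2\pi)^m m!)$ on coadjoint orbits, the $\sqrt{-1}$-normalized pairing defining the density on $X(\Gamma)$, and the square-root-Jacobian $j_G^{1/2}$ on the group side all conspire correctly. A secondary technical issue is that $\nu$ need not be purely imaginary, so $X(\Gamma)$ sits in $\mathfrak{g}_{\mathbb{C}}^*$ rather than $i\mathfrak{g}^*$; here one exploits that $\mathcal{F}[\omega]$ extends to an entire function of Paley--Wiener type on $\mathfrak{g}_{\mathbb{C}}^*$, so the contour integral against $X(\Gamma)$ is absolutely convergent and well-defined, and one can deform from the tempered case ($\nu\in i\mathfrak{a}^*$, where both sides are classical Duflo--Rossmann) to general $\nu$ by holomorphy in the continuous parameter.
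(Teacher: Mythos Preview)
Your sketch is essentially correct and in fact goes well beyond what the paper itself provides. The paper does not give a proof of this Proposition at all: it simply records the one-line attribution ``This result follows from \cite{Du70}, \cite{Ro78}, \cite{Ro80}, and pages 377--378 of \cite{Ro84}.'' In other words, the Proposition is quoted from the literature rather than proved.

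What you have written is a reasonable outline of the argument contained in those references, and your strategy matches theirs: reduce by linearity to the standard modules $I(\Gamma)$, compute the character of an induced representation by the Harish-Chandra induced-character formula transported to the Lie algebra via $j_G^{1/2}\exp^*$, invoke the Duflo--Rossmann orbit-integral formula for (limits of) discrete series on the Levi $M$, and then reassemble the $K$- and nilradical-integrations into the contour $X(\Gamma)$. Your remark that the non-tempered case (general $\nu$) is handled by the Paley--Wiener extension of $\mathcal{F}[\omega]$ to $\mathfrak{g}_{\mathbb{C}}^*$ is exactly the point of Rossmann's pages 377--378 in \cite{Ro84}. The caveats you flag about density normalizations ($j_G^{1/2}$ versus $j_M^{1/2}$, the $\sqrt{-1}$ pairing, the Liouville normalization) are genuine bookkeeping issues that one must check, but they are not obstructions---the definitions in the paper were set up precisely so that these match.

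So: the approach is the same as the cited literature, and your proposal is a faithful expansion of what the paper merely cites.
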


This result follows from \cite{Du70}, \cite{Ro78}, \cite{Ro80}, and pages 377-378 of \cite{Ro84}.
\bigskip

We end this section with a Lemma showing that the asymptotics of unions of singular cycles are contained in the singular set. In the next section, we will use this Lemma in the proof of Theorem \ref{maintheorem}. As in the introduction, if $S\subset V$ is a subset of a finite dimensional, real vector space, then the asymptotic cone of $S$ in $V$ is defined by
$$\operatorname{AC}(S)=\{\xi\in V|\ \xi\in \mathcal{C}\ \text{an\ open\ cone}\implies \mathcal{C}\cap S\ \text{unbounded}\}\cup \{0\}.$$ 
Further, if $C(\Gamma)\subset \mathfrak{g}_{\mathbb{C}}^*$ is one of the above cycles, let
$\operatorname{supp}C(\Gamma)$ denote the support of this cycle. 

\begin{lemma} \label{singularbound} We have the inclusion
$$\operatorname{AC}\left(\bigcup_{J(\Gamma)\in \widehat{G}_s} \operatorname{supp}C(\Gamma)\right)\subset i\mathfrak{g}_s^*.$$
\end{lemma}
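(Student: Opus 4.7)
The plan is to analyze $\operatorname{supp}C(\Gamma)$ through the decomposition $C(\Gamma)=\sum_{\Gamma_1}M_{\Gamma_1,\Gamma}X(\Gamma_1)$ and reduce the problem to an asymptotic statement about infinitesimal characters. First I would show that every contour $X(\Gamma_1)$ lies in the fiber of the Chevalley quotient $\Phi\colon\mathfrak{g}_\mathbb{C}^*\to\mathfrak{g}_\mathbb{C}^*/\!/G_\mathbb{C}$ over $\chi_{\Gamma_1}=d\lambda_1+\nu_1$. Writing a typical point of $\mathcal{O}_{D(\Lambda_1)}^M+\nu_1+i(\mathfrak{g}/\mathfrak{p}_1)^*$ as $\xi_{MA}+i\tau$, where $\xi_{MA}$ lies in the Levi factor and is $M$-conjugate to $d\lambda_1+\nu_1$, and $i\tau\in i(\mathfrak{g}/\mathfrak{p}_1)^*\cong i\bar{\mathfrak{n}}^*$, I conjugate by $\operatorname{Ad}^*\exp(tH)$ for $H\in\mathfrak{a}$ strictly dominant for $\mathfrak{n}$: this sends $i\tau\to 0$ as $t\to+\infty$ while fixing $\xi_{MA}$. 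By continuity and $G$-invariance of $\Phi$, $\Phi(\xi_{MA}+i\tau)=\Phi(\xi_{MA})=\Phi(\chi_{\Gamma_1})$. Since $M_{\Gamma_1,\Gamma}\ne 0$ forces $\Phi(\chi_{\Gamma_1})=\Phi(\chi_\Gamma)$, taking unions yields
$$\bigcup_{J(\Gamma)\in\widehat{G}_s}\operatorname{supp}C(\Gamma)\ \subseteq\ \Phi^{-1}(S),\qquad S=\{\Phi(\chi_\Gamma):J(\Gamma)\in\widehat{G}_s\}.$$

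Next I would bound the real part. The only real contribution to $X(\Gamma_1)$ is $K\cdot\operatorname{Re}\nu_1$, and classical bounds on the unitary dual (Knapp--Stein intertwining theory together with Vogan's classification) confine $\operatorname{Re}\nu_1$ to a uniformly bounded subset of $\mathfrak{a}^*$ as $\Gamma$ ranges over unitary Langlands parameters. This forces $\operatorname{AC}(\bigcup\operatorname{supp}C(\Gamma))\subseteq i\mathfrak{g}^*$.

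The main work is showing the asymptotic cone of $S$ lies in the discriminant locus, i.e., the image under $\Phi$ of the non-regular-semisimple elements of $\mathfrak{g}_\mathbb{C}^*$. For tempered $J(\Gamma)\in\widehat{G}_s$, $\chi_\Gamma$ already lies on one of finitely many root hyperplanes (up to $W$-translation), a condition preserved by scaling. For non-tempered unitary $J(\Gamma)$, positive-definiteness of the invariant Hermitian form forces the Hermitian condition $w\chi_\Gamma=-\overline{\chi_\Gamma}$ for some Weyl element $w$, together with matching conditions on the discrete and continuous data (as in the Stein/Speh complementary series for $\operatorname{GL}$, where one must take $\sigma_1=\sigma_2$, producing coincident eigenvalue pairs in $\chi_\Gamma$); these relations persist under scaling and place the asymptotic direction of $\chi_\Gamma$ on root hyperplanes. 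Combining with the real-part bound gives $\operatorname{AC}(\Phi^{-1}(S))\cap i\mathfrak{g}^*\subseteq i\mathfrak{g}_s^*$, which is the desired inclusion.

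The hard part is this final step: extracting from unitarity the precise claim that the asymptotic directions of infinitesimal characters of singular unitary representations are non-regular. In full generality it appears to require either invoking Vogan's classification of the unitary dual or a uniform, case-by-case treatment of the Hermitian and positive-definiteness constraints across all complementary-series families.
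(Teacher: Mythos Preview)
Your overall strategy---reducing via the Chevalley map to infinitesimal characters, bounding $\operatorname{Re}\nu$ uniformly, then arguing singularity---closely parallels the paper's proof. There are two places where your argument falls short.

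First, you overestimate the ``hard part.'' The paper does not need Vogan's classification of the unitary dual or any case-by-case treatment of complementary series. Instead it invokes a single fact (citing Knapp--Zuckerman, equivalently Theorem~16.6 of Knapp's book): for any unitary $J(\Gamma)\in\widehat{G}_s$, the element $d\lambda+\operatorname{Im}\nu\in i\mathfrak{h}^*$ is \emph{exactly} singular, not merely asymptotically so. Combined with the uniform bound on $\operatorname{Re}\nu$ and $\operatorname{Re}\nu_1$ (your step on the real part, the paper's Lemma~2.3) and the equality of complex infinitesimal characters when $M_{\Gamma_1,\Gamma}\neq 0$, one transfers through the complex Weyl group to conclude that $d\lambda_1+\operatorname{Im}\nu_1$ lies within a fixed distance $2d$ of the singular hyperplanes for every relevant $\Gamma_1$. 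This replaces your proposed Hermitian/positivity analysis entirely; the Hermitian symmetry $w\chi_\Gamma=-\overline{\chi_\Gamma}$ alone does not force the asymptotic direction of $\chi_\Gamma$ onto a root hyperplane, so your sketch of that step is genuinely incomplete.

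Second, the implication ``$\operatorname{AC}(S)$ lies in the discriminant locus $\Rightarrow \operatorname{AC}(\Phi^{-1}(S))\cap i\mathfrak{g}^*\subseteq i\mathfrak{g}_s^*$'' is not justified as written: the fibers of $\Phi$ are unbounded, so control on $S$ alone does not bound $\operatorname{AC}(\Phi^{-1}(S))$. The paper closes this gap by fixing a regular $\xi\in i(\mathfrak{h}_2^*)'$, choosing a cone $\mathcal{C}=\operatorname{Ad}^*(U)\cdot\mathcal{C}_2$ with $U\subset G$ precompact and $\mathcal{C}_2\subset i(\mathfrak{h}_2^*)'$ avoiding the walls, and then using that the $G$-action on $i(\mathfrak{g}^*)'$ is \emph{proper} to deduce that $\mathcal{C}$ meets the union of relevant $G$-orbits $\operatorname{Ad}^*(G)\cdot(d\lambda_1+\operatorname{Im}\nu_1)$ in a bounded set. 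You need to supply an argument of this kind; the real-part bound by itself only gets you into $i\mathfrak{g}^*$, not into $i\mathfrak{g}_s^*$.
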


As defined in the introduction, $\widehat{G}_s=\widehat{G}-\widehat{G}_{\text{temp}}^{\text{\ }\prime}$ is the set of singular, unitary representations of $G$ and $i\mathfrak{g}_s^*=i\mathfrak{g}^*-i(\mathfrak{g}^*)'$ is the set of singular elements in $i\mathfrak{g}^*$.

\begin{proof}  Recall $\widehat{G}$ denotes the set of irreducible, unitary representations of $G$. Define the \emph{extended unitary dual} of $G$ to be  
$$\widehat{G}_{\text{ext}}=\{J(\Gamma_1)|\ \exists\ \Gamma\ \text{s.t.}\ M_{\Gamma_1,\Gamma}\neq 0\ \text{and}\ J(\Gamma)\in \widehat{G}\}.$$ Similarly, recall from the introduction $\widehat{G}_s=\widehat{G}-\widehat{G}_{\text{temp}}^{\text{\ }\prime}$ is the set of singular, unitary representations of $G$ and define the \emph{singular, extended unitary dual} of $G$ to be 
$$\widehat{G}_{\text{ext},s}=\{J(\Gamma_1)|\ \exists\ \Gamma\ \text{s.t.}\ M_{\Gamma_1,\Gamma}\neq 0\ \text{and}\ J(\Gamma)\in \widehat{G}_s\}.$$

Since $C(\Gamma)=\sum_{\Gamma_1}M_{\Gamma_1,\Gamma}X(\Gamma_1)$, to prove the Lemma, it is enough to show
$$\operatorname{AC}\left(\bigcup_{J(\Gamma)\in \widehat{G}_{\text{ext},s}} X(\Gamma)\right)\subset i\mathfrak{g}_s^*.$$  
Next, we require an elementary Lemma.

\begin{lemma} \label{tinylemma} Suppose $\{S_i\}_{i\in I}$ is a finite collection of subsets of a real, finite dimensional vectors space. Then
$$\operatorname{AC}\left(\bigcup_{i\in I} S_i\right)\subset \bigcup_{i\in I} \operatorname{AC}\left(S_i\right).$$
\end{lemma}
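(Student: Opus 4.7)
The plan is to prove Lemma \ref{tinylemma} by contrapositive, exploiting the finiteness of $I$ in the two essential places: finite intersections of open sets are open, and finite unions of bounded sets are bounded.

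First I would dispose of the case $\xi = 0$, which lies in every asymptotic cone by definition, so the inclusion is automatic there. From now on fix $\xi \neq 0$ and assume $\xi \notin \bigcup_{i \in I} \operatorname{AC}(S_i)$; the goal is to show $\xi \notin \operatorname{AC}\bigl(\bigcup_{i \in I} S_i\bigr)$. For each $i \in I$, the assumption $\xi \notin \operatorname{AC}(S_i)$ unpacks directly from the definition to produce an open cone $\mathcal{C}_i \subset V$ with $\xi \in \mathcal{C}_i$ such that $\mathcal{C}_i \cap S_i$ is bounded.

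Next I would set $\mathcal{C} = \bigcap_{i \in I} \mathcal{C}_i$. Since $I$ is finite, $\mathcal{C}$ is open; since each $\mathcal{C}_i$ is a cone, so is $\mathcal{C}$; and $\xi$ lies in $\mathcal{C}$ since it lies in every $\mathcal{C}_i$. Then
$$\mathcal{C} \cap \bigcup_{i \in I} S_i \;=\; \bigcup_{i \in I} \bigl(\mathcal{C} \cap S_i\bigr) \;\subset\; \bigcup_{i \in I} \bigl(\mathcal{C}_i \cap S_i\bigr),$$
which is a finite union of bounded subsets of $V$, hence bounded. This produces an open cone containing $\xi$ whose intersection with $\bigcup_i S_i$ is bounded, witnessing that $\xi \notin \operatorname{AC}\bigl(\bigcup_i S_i\bigr)$ and completing the contrapositive.

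There is no real obstacle here; the only point deserving emphasis is that the finiteness of $I$ is used twice and is essential — an infinite intersection of open cones need not be open (and an infinite union of bounded sets need not be bounded), so the lemma genuinely fails without this hypothesis.
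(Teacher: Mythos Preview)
Your argument is correct and is precisely the elementary proof one expects: the paper in fact omits the proof entirely (``We leave the elementary proof to the reader''), so there is nothing to compare against, but your contrapositive using the finite intersection of the witnessing cones is the intended argument.
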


We leave the elementary proof to the reader. To use this Lemma in our context, we will break up $\widehat{G}_{\text{ext},s}$ into a finite number of pieces. If $\Gamma=(H,\gamma,R_{i\mathbb{R}}^+)$ is a Langlands parameter, let $H(\Gamma):=H$ denote the associated Cartan subgroup. If $H$ and $H_1$ are Cartan subgroups of $G$, define
$$(\widehat{G}_{\text{ext},s})_{H,H_1}$$
$$=\{J(\Gamma_1)|\ H_1=H(\Gamma_1),\ \exists\ \Gamma\ \text{s.t.}\ M_{\Gamma_1,\Gamma}\neq \{0\}\ \&\ J(\Gamma)\in \widehat{G}_s,\ H=H(\Gamma)\}.$$
Since there are finitely many conjugacy classes of Cartan subgroups of $G$, by Lemma \ref{tinylemma}, it is enough to show
$$\operatorname{AC}\left(\bigcup_{J(\Gamma_1)\in (\widehat{G}_{\text{ext},s})_{H,H_1}} X(\Gamma_1)\right)\subset i\mathfrak{g}_s^*$$
for every pair of Cartan subgroups $H$, $H_1$.

\begin{lemma} [Langlands] \label{realpartlemma} Fix a Cartan subgroup $H\subset G$, decompose $H=TA$, let $\mathfrak{a}$ denote the Lie algebra of $A$, and let $i\mathfrak{a}^*$ denote the set of imaginary valued linear functionals on $\mathfrak{a}$ (or alternately unitary one dimensional characters of $A$). Then there exists a bounded subset $B_H\subset \mathfrak{a}^*$ such that if $\Gamma$ is a Langlands parameter with $H(\Gamma)=H$ and continuous part $(A,\nu)$ for which $J(\Gamma)\in \widehat{G}_{\text{ext}}$, then $$\nu\in i\mathfrak{a}^*+B_H.$$
\end{lemma}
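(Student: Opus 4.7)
My plan is to reduce the boundedness of $\operatorname{Re}(\nu)$ to the classical unitarity bound on real parts of continuous Langlands parameters of genuinely unitary representations, and then to transfer that bound through the identity of infinitesimal characters. Unpacking the definition of $\widehat{G}_{\text{ext}}$, if $J(\Gamma)\in \widehat{G}_{\text{ext}}$ then there is a Langlands parameter $\Gamma'$ with $J(\Gamma')\in \widehat{G}$ genuinely unitary and $M_{\Gamma,\Gamma'}\neq 0$. As noted earlier in Section 2, this last condition forces $\mathcal{O}_\Gamma^{G_{\mathbb{C}}}=\mathcal{O}_{\Gamma'}^{G_{\mathbb{C}}}$, so after identifying the two complex Cartans with a common abstract Cartan, $d\gamma$ and $d\gamma'$ lie in the same $W_{G_{\mathbb{C}}}$-orbit.

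The first main step is to invoke the classical unitarity bound: for each Cartan subgroup $H'=T'A'$ in $G$, there is a bounded subset $C_{H'}\subset \mathfrak{a}'^*$ such that every unitary Langlands quotient $J(\Gamma')$ with $H(\Gamma')=H'$ and continuous parameter $(A',\nu')$ satisfies $\operatorname{Re}(\nu')\in C_{H'}$. This is standard: Hermiticity of $J(\Gamma')$ forces $\operatorname{Re}(\nu')$ to lie in a compact set controlled by $\rho_{P'}$ and the real Weyl group of $\mathfrak{a}'$ in $\mathfrak{g}$, a set depending only on the Cartan class of $H'$ and not on the discrete parameter $\Lambda'$. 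See \cite{La} or \cite{Kn86}. Since $G$ has only finitely many conjugacy classes of Cartans, we obtain a uniform bound on $\operatorname{Re}(d\gamma')$ in $\mathfrak{g}_{\mathbb{C}}^*$ valid for all unitary $\Gamma'$.

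The second step is the transfer. Since $W_{G_{\mathbb{C}}}$ is finite and preserves the real structure on $\mathfrak{g}_{\mathbb{C}}^*$ (with respect to any fixed $G_{\mathbb{C}}$-invariant bilinear form), the Weyl-conjugate $d\gamma$ of $d\gamma'$ also has bounded real part. Decomposing $d\gamma=d\lambda+\nu$ with $d\lambda\in i\mathfrak{t}^*$ purely imaginary and $\nu\in \mathfrak{a}_{\mathbb{C}}^*$, we have $\operatorname{Re}(d\gamma)|_{\mathfrak{t}}=0$ and $\operatorname{Re}(d\gamma)|_{\mathfrak{a}}=\operatorname{Re}(\nu)$. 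Hence $\operatorname{Re}(\nu)$ is bounded in $\mathfrak{a}^*$ by a set $B_H$ depending only on $H$, giving $\nu\in i\mathfrak{a}^*+B_H$ as claimed.

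The main obstacle I anticipate is pinning down the unitarity bound in the precise form needed, namely uniform across all discrete parameters $\Lambda'$ attached to a given Cartan $H'$. The existence of some bound on $\operatorname{Re}(\nu')$ for unitary Langlands quotients is folklore, but verifying that the bound depends only on $H'$ (and not on $\Lambda'$ or on the infinitesimal character) requires tracing the Hermiticity condition through the Langlands classification carefully; once this uniformity is established, the remaining conjugation-and-restriction argument is routine.
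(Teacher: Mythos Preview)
Your first step (the uniform bound on $\operatorname{Re}\nu'$ for genuinely unitary $J(\Gamma')$) is fine and is indeed the classical input. The gap is in your second step, the ``routine'' transfer, which does not go through as written.

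You assert that $W_{G_{\mathbb{C}}}$, together with the $G_{\mathbb{C}}$-conjugation identifying the two complex Cartans, preserves the real structure and hence sends $\operatorname{Re}(d\gamma')$ to $\operatorname{Re}(d\gamma)$. This is false. The relevant complex conjugation is the one on $\mathfrak{h}_{\mathbb{C}}^*$ fixing $\mathfrak{h}^*$; a reflection $s_\alpha$ commutes with it only when $\alpha$ is real or imaginary, not when $\alpha$ is complex. Likewise, a Cayley transform between two nonconjugate real Cartans is implemented by an element of $G_{\mathbb{C}}\setminus G$ and does not intertwine the two real structures: for instance in $\operatorname{SL}(2,\mathbb{R})$ the Cayley map sends $i\mathfrak{t}^*$ onto $\mathfrak{a}^*$, not onto $i\mathfrak{a}^*$. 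What \emph{is} preserved by the full complex Weyl group and by Cartan conjugation is the root-lattice real form $\mathfrak{h}_{\mathbb{R}}^*=i\mathfrak{t}^*\oplus\mathfrak{a}^*$; but in that decomposition the ``real'' component of $d\gamma$ is $d\lambda+\operatorname{Re}\nu$, and $d\lambda$ is unbounded. So from $|\operatorname{Re}\nu'|$ bounded and $d\gamma=w\cdot\operatorname{Ad}^*_{g^{-1}}(d\gamma')$ you cannot conclude that $\operatorname{Re}\nu$ is bounded by your argument; the $i\mathfrak{t}'^*$-part of $d\gamma'$ can feed into the $\mathfrak{a}^*$-part of $d\gamma$.

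The paper does not supply a proof either; it simply cites Langlands and the exposition in Knapp (Theorems 8.47 and 8.61). The actual argument there is analytic rather than purely Weyl-group-theoretic: unitarity of $J(\Gamma')$ forces its $K$-finite matrix coefficients to be bounded, and Harish-Chandra's asymptotic expansions then bound the real parts of \emph{all} exponents of $J(\Gamma')$, not just the leading one $\nu'$. The standard modules $I(\Gamma_1)$ entering the character formula for $J(\Gamma')$ are tied to these exponents through the Langlands classification, which yields the desired uniform bound on $\operatorname{Re}\nu_1$. So the obstacle is not where you anticipated: the unitarity bound for $\widehat{G}$ is standard, but the passage to $\widehat{G}_{\text{ext}}$ requires the exponent machinery rather than a conjugation argument.
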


This Lemma follows from results of Langlands \cite{La}, which make heavy use of results of Harish-Chandra \cite{HC66}, \cite{HC84}. An exposition of these results can be found in \cite{Kn86}, in particular, see Theorem 8.47 and Theorem 8.61. 

In passing, we note that more precise bounds on the size of $B_H$ appeared in Proposition 7.18 of \cite{SV}. However, we will not need precise bounds in this paper.

Suppose $\Gamma=(H,\gamma,R_{i\mathbb{R}}^+)$ is a Langlands parameter with discrete part $\Gamma=(T,\lambda,R_{i\mathbb{R}}^+)$ and continuous part $(A,\nu)$. Then we will write $\nu=\operatorname{Re}\nu+\operatorname{Im}\nu$ with $\operatorname{Re}\nu\in \mathfrak{a}^*$ and $\operatorname{Im}\nu\in i\mathfrak{a}^*$. One can write the conclusion of Lemma \ref{realpartlemma} as $\operatorname{Re}\nu\in B_H$.
\bigskip

Now, back to the proof of Lemma \ref{singularbound}. Fix Cartan subgroups $H$ and $H_1$ for which $(\widehat{G}_{\text{ext},s})_{H,H_1}$ is nonempty. Fix $J(\Gamma)\in \widehat{G}$ with $H(\Gamma)=H$ and suppose $J(\Gamma_1)\in (\widehat{G}_{\text{ext},s})_{H,H_1}$ with $M_{\Gamma_1,\Gamma}\neq 0$. Decompose $\Gamma=(H,\gamma,R_{i\mathbb{R}}^+)$ into a discrete Langlands parameter $\Lambda=(T,\lambda,R_{i\mathbb{R}}^+)$ and a continuous parameter $(A,\nu)$. Further, let $Z_G(A)=MA$ be the Langlands decomposition of $Z_G(A)$. Let $\mathfrak{h}$ denote the Lie algebra of $H$, let $\mathfrak{t}$ be the Lie algebra of $T$, let $\mathfrak{a}$ be the Lie algebra of $A$, and note $$d\lambda\in i\mathfrak{t}^*,\ \nu\in \mathfrak{a}_{\mathbb{C}}^*,\ \text{and}\ d\gamma=d\lambda+\nu\in \mathfrak{h}_{\mathbb{C}}^*.$$ 
Similarly, decompose the Langlands parameter $\Gamma_1=(H_1,\gamma_1,(R_{\mathbb{R}}^+)_1)$ into a discrete Langlands parameter $\Lambda_1=(T_1,\lambda_1,(R_{i\mathbb{R}}^+)_1)$ and a continuous parameter $(A_1,\nu_1)$. Let $Z_G(A_1)=M_1A_1$ be the Langlands decomposition of $Z_G(A_1)$. Let $\mathfrak{h}_1$ denote the Lie algebra of $H_1$, let $\mathfrak{t}_1$ denote the Lie algebra of $T_1$, let $\mathfrak{a}_1$ denote the Lie algebra of $A_1$, and note $$d\lambda_1\in i\mathfrak{t}_1^*,\ \nu_1\in (\mathfrak{a}_1)_{\mathbb{C}}^*,\ \text{and}\ d\gamma_1=d\lambda_1+\nu_1\in (\mathfrak{h}_1)_{\mathbb{C}}^*.$$ 

Let us look back at the contour $$X(\Gamma_1)=K\cdot (\mathcal{O}_{D(\Lambda_1)}^{M_1}+\nu_1+i(\mathfrak{g}/\mathfrak{p}_1)^*))\subset K\cdot (\mathcal{O}^{M_1}_{D(\Lambda_1)}+\operatorname{Im}\nu_1+i(\mathfrak{g}/\mathfrak{p}_1)^*)+K\cdot \operatorname{Re}\nu_1$$
and define
$$\widetilde{X}(\Gamma_1)=K\cdot (\mathcal{O}_{D(\Lambda_1)}^{M_1}+\operatorname{Im}\nu_1+i(\mathfrak{g}/\mathfrak{p}_1)^*).$$
Fix Cartan subgroups $H$ and $H_1$ for which $(\widehat{G}_{\text{ext},s})_{H,H_1}$ is nonempty. By Lemma \ref{realpartlemma}, $K\cdot \operatorname{Re}\nu_1\in K\cdot B_{H_1}$, a bounded set, for all $\Gamma_1$ with $J(\Gamma_1)\in (\widehat{G}_{\text{ext},s})_{H,H_1}$. Thus, we see
$$\operatorname{AC}\left(\bigcup_{J(\Gamma_1)\in (\widehat{G}_{\text{ext},s})_{H,H_1}}X(\Gamma_1)\right)\subset \operatorname{AC}\left(\bigcup_{J(\Gamma_1)\in (\widehat{G}_{\text{ext},s})_{H,H_1}}\widetilde{X}(\Gamma_1)\right).$$
Therefore, to prove Lemma \ref{singularbound}, we need only show 

\begin{equation}\label{eq:singularbound_reduction}
\operatorname{AC}\left(\bigcup_{J(\Gamma_1)\in (\widehat{G}_{\text{ext},s})_{H,H_1}}\widetilde{X}(\Gamma_1)\right)\subset i\mathfrak{g}_s^*
\end{equation}

Now, let $H_{\mathbb{C}}$ (resp. $(H_1)_{\mathbb{C}}$) denote the complexifications of $H$ (resp. $H_1$) in $G_{\mathbb{C}}$, the complexification of $G$. Notice $H_{\mathbb{C}}$ and $(H_1)_{\mathbb{C}}$ are complex Cartan subgroups of $G_{\mathbb{C}}$. In particular, they are conjugate by $g\in G_{\mathbb{C}}$, and we deduce $\operatorname{Ad}_g\mathfrak{h}_{\mathbb{C}}=(\mathfrak{h}_1)_{\mathbb{C}}$. Recall $M_{\Gamma_1,\Gamma}\neq 0$ implies that the representations $I(\Gamma)$ and $I(\Gamma_1)$ have the same infinitesimal character. If $$W_{\mathbb{C}}=N_{G_{\mathbb{C}}}(H_{\mathbb{C}})/H_{\mathbb{C}},\ W_{1,\mathbb{C}}=N_{G_{\mathbb{C}}}((H_1)_{\mathbb{C}})/(H_1)_{\mathbb{C}}$$ are the complex Weyl groups associated to $H$ and $H_1$, then we deduce that there exists $w\in W_{1,\mathbb{C}}$ such that 
$$w\cdot \operatorname{Ad}_{g^{-1}}^*d\gamma=d\gamma_1\in (\mathfrak{h}_1)^*_{\mathbb{C}}.$$

Put a $W_{\mathbb{C}}$ invariant inner product on $\mathfrak{h}_{\mathbb{C}}^*$, use $\operatorname{Ad}_{g^{-1}}^*$ to transfer it to $(\mathfrak{h}_1)_{\mathbb{C}}^*$, and let $|\cdot|$ denote the corresponding norms on $\mathfrak{h}_{\mathbb{C}}^*$ and $(\mathfrak{h}_1)_{\mathbb{C}}^*$. We know from Lemma \ref{realpartlemma} that there exists a constant $d>0$ such that $|\operatorname{Re}\nu|<d$ and $|\operatorname{Re}\nu_1|<d$ since $J(\Gamma), J(\Gamma_1)\in \widehat{G}_{\text{ext}}$. Moreover, since $J(\Gamma)\in \widehat{G}_{s}$, we deduce that $$d\lambda+\operatorname{Im}\nu\in i\mathfrak{h}^*-i(\mathfrak{h}^*)'$$ is singular (see \cite{KnZ} or Theorem 16.6 of \cite{Kn86}). Thus, $$w\cdot \operatorname{Ad}_{g^{-1}}^*(d\lambda+\operatorname{Im}\nu)\in (\mathfrak{h}_1)^*_{\mathbb{C}}-((\mathfrak{h}_1)^*_{\mathbb{C}})'$$ is singular, and 
\begin{equation}\label{eq:bound}
|(d\lambda_1+\operatorname{Im}\nu_1)-w\cdot \operatorname{Ad}_{g^{-1}}^*(d\lambda+\operatorname{Im}\nu)|\leq |\operatorname{Re}\nu_1|+|\operatorname{Re}\nu|<2d
\end{equation}
by two applications of the triangle inequality.
Thus, the distance between the parameter $d\lambda_1+\operatorname{Im}\nu_1$ and the singular set is at most $2d$ for all parameters $\Gamma_1$ with $J(\Gamma_1)\in (\widehat{G}_{\text{ext},s})_{H,H_1}$. To utilize this information in the proof of Lemma \ref{singularbound}, we need another Lemma.

There exists a natural map
$$p\colon \mathfrak{g}_{\mathbb{C}}^*\longrightarrow (\mathfrak{h}_1)_{\mathbb{C}}^*/W_{1,\mathbb{C}}$$
defined by mapping $\xi\in \mathfrak{g}_{\mathbb{C}}^*$ to the Weyl group orbit of points $\eta\in (\mathfrak{h}_1)_{\mathbb{C}}^*$ for which $q(\xi)=q(\eta)$ for every invariant polynomial $q\in \operatorname{Pol}(\mathfrak{g}_{\mathbb{C}}^*)^{G_{\mathbb{C}}}$.

\begin{lemma} \label{technicalcontour} If $\Gamma_1$ is a Langlands parameter with discrete part $\Lambda_1=(T_1,\lambda_1,(R_{i\mathbb{R}}^+)_1)$ and continuous part $(A_1,\nu_1)$, then the contour 
$$\widetilde{X}(\Gamma_1):=K\cdot (\mathcal{O}_{D(\Lambda_1)}^{M_1}+\operatorname{Im}\nu_1+i(\mathfrak{g}/\mathfrak{p}_1)^*)$$
is contained in the fiber $p^{-1}(d\lambda_1+\operatorname{Im}\nu_1)$.
\end{lemma}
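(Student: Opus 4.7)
The plan is to chase the $p$-value of a typical point in the contour through a sequence of three simplifications, at each stage exploiting the fact that invariant polynomials on $\mathfrak{g}_{\mathbb{C}}^*$ are constant on $G_{\mathbb{C}}$-orbits (and hence on orbit closures). A generic point in $\widetilde{X}(\Gamma_1)$ has the form $x = k \cdot (\xi + \operatorname{Im}\nu_1 + \eta)$ with $k \in K$, $\xi \in \mathcal{O}_{D(\Lambda_1)}^{M_1} \subset i\mathfrak{m}_1^*$, and $\eta \in i(\mathfrak{g}/\mathfrak{p}_1)^*$; since $K \subset G \subset G_{\mathbb{C}}$, the $K$-factor may be dropped from the very beginning, so it suffices to compute $p(\xi + \operatorname{Im}\nu_1 + \eta)$.

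Next I would eliminate the $\eta$-direction. Because $i(\mathfrak{g}/\mathfrak{p}_1)^*$ is the annihilator of $\mathfrak{p}_1 = \mathfrak{m}_1 + \mathfrak{a}_1 + \mathfrak{n}_1$, the functional $\eta$ is supported on $\mathfrak{n}_1^-$, while $\xi$ and $\operatorname{Im}\nu_1$ lie in $i\mathfrak{m}_1^*$ and $i\mathfrak{a}_1^*$ respectively and are therefore fixed under the adjoint action of $A_{1,\mathbb{C}}$. Letting $a \in A_{1,\mathbb{C}}$ run over elements that contract $\mathfrak{n}_1^-$, we obtain $\operatorname{Ad}^*_a(\xi + \operatorname{Im}\nu_1 + \eta) \to \xi + \operatorname{Im}\nu_1$, which shows that $\xi + \operatorname{Im}\nu_1$ lies in the closure of the $G_{\mathbb{C}}$-orbit of $\xi + \operatorname{Im}\nu_1 + \eta$. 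Continuity and $G_{\mathbb{C}}$-invariance of invariant polynomials then give $p(\xi + \operatorname{Im}\nu_1 + \eta) = p(\xi + \operatorname{Im}\nu_1)$.

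Now I would move $\xi + \operatorname{Im}\nu_1$ onto the Cartan. Each component orbit of $\mathcal{O}_{D(\Lambda_1)}^{M_1}$ is a regular elliptic coadjoint orbit in $i\mathfrak{m}_1^*$, so $\xi = m \cdot \mu$ for some $m \in M_1$ and some $\mu \in i\mathfrak{t}_1^*$, and because $A_1$ is central in $M_1 A_1$, $m$ fixes $\operatorname{Im}\nu_1$, yielding $\xi + \operatorname{Im}\nu_1 = m \cdot (\mu + \operatorname{Im}\nu_1)$ with $\mu + \operatorname{Im}\nu_1 \in (\mathfrak{h}_1)_{\mathbb{C}}^*$. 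By the Chevalley restriction theorem, $p$ restricted to $(\mathfrak{h}_1)_{\mathbb{C}}^*$ is nothing but the quotient by $W_{1,\mathbb{C}}$, so $p(\mu + \operatorname{Im}\nu_1) = W_{1,\mathbb{C}} \cdot (\mu + \operatorname{Im}\nu_1)$. It remains to identify this with $W_{1,\mathbb{C}} \cdot (d\lambda_1 + \operatorname{Im}\nu_1)$: by the construction of $\mathcal{O}_{D(\Lambda_1)}^{M_1}$ recalled earlier in the section, the orbit $M_1 \cdot d\lambda_1$ lies in the closure of $M_1 \cdot \mu$, so $\mu$ and $d\lambda_1$ share all $M_{1,\mathbb{C}}$-invariants and, being semisimple elements of $i\mathfrak{t}_1^*$, are conjugate by an element $w$ of the complex Weyl group of $\mathfrak{h}_1$ in $\mathfrak{m}_1$; since this Weyl group sits inside $W_{1,\mathbb{C}}$ and acts trivially on $\mathfrak{a}_{1,\mathbb{C}}^*$, we get $\mu + \operatorname{Im}\nu_1 = w \cdot (d\lambda_1 + \operatorname{Im}\nu_1)$, which completes the chain.

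The only step that is not essentially bookkeeping is the last one, namely the assertion that $\mu$ is $W_{M_1,\mathbb{C}}$-conjugate to $d\lambda_1$. This rests on the description of $\mathcal{O}_{D(\Lambda_1)}^{M_1}$ as a union of regular elliptic orbits whose common closure contains $M_1\cdot d\lambda_1$, together with the elementary fact that two elliptic elements of $i\mathfrak{t}_1^*$ with equal $M_{1,\mathbb{C}}$-invariants are $W_{M_1,\mathbb{C}}$-conjugate; once this is in hand, the identification $p(x) = W_{1,\mathbb{C}}\cdot (d\lambda_1 + \operatorname{Im}\nu_1)$ follows by concatenating the three reductions above.
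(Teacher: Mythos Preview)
There is a genuine gap in Steps 3--5. You assert that each component of $\mathcal{O}_{D(\Lambda_1)}^{M_1}$ is an orbit through some $\mu\in i\mathfrak{t}_1^*$, so that $\xi=m\cdot\mu$ with $\mu$ semisimple. This holds when $d\lambda_1$ is regular for $\mathfrak{m}_1$, but fails for genuine limits of discrete series: for instance, when $M_1=\operatorname{SL}(2,\mathbb{R})$ and $d\lambda_1=0$, Rossmann's cycle $\mathcal{O}_{D(\Lambda_1)}^{M_1}$ is supported on a nonzero \emph{nilpotent} orbit, whose points are not $M_1$-conjugate to anything in $i\mathfrak{t}_1^*$. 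Indeed, semisimple (in particular elliptic) coadjoint orbits are closed, so the very fact that $M_1\cdot d\lambda_1$ lies \emph{properly} in the closure already tells you the orbit cannot be elliptic when $d\lambda_1$ is singular. Your Weyl-group conjugacy argument in Step 5, which needs both $\mu$ and $d\lambda_1$ to be semisimple elements of $i\mathfrak{t}_1^*$, therefore does not apply. (There is also a harmless sign slip in Step 2: you want $a\in A_{1,\mathbb{C}}$ that contracts $\mathfrak{n}_1$, not $\mathfrak{n}_1^-$, since under the Killing pairing $(\mathfrak{g}/\mathfrak{p}_1)^*$ identifies with $\mathfrak{n}_1$.)

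The paper handles this by splitting into cases. When $d\lambda_1+\operatorname{Im}\nu_1$ is regular it observes directly that $\mathcal{O}_{\xi}^{M_1}+i(\mathfrak{g}/\mathfrak{p}_1)^*\subset \operatorname{Ad}^*(P_1)\cdot\xi$, so the whole contour sits in a single $G$-orbit. When it is singular, the paper invokes Rossmann's description of $\mathcal{O}_{D(\Lambda_1)}^{M_1}$ as a limit of regular orbits $\mathcal{O}^{M_1}_{\xi_k}$ with $\xi_k\to d\lambda_1$ and passes to the limit in the invariant polynomials. Your argument can be salvaged in the same spirit: after your Step 2, skip the detour through $i\mathfrak{t}_1^*$ and argue directly that any $\xi\in\operatorname{supp}\mathcal{O}_{D(\Lambda_1)}^{M_1}$ satisfies $q(\xi)=q(d\lambda_1)$ for every $M_{1,\mathbb{C}}$-invariant polynomial $q$ (since $M_1\cdot d\lambda_1\subset\overline{M_1\cdot\xi}$, or via the limiting description); then note that for any $G_{\mathbb{C}}$-invariant $Q$ the restriction $Q(\,\cdot+\operatorname{Im}\nu_1)|_{i\mathfrak{m}_1^*}$ is $M_{1,\mathbb{C}}$-invariant, which gives $p(\xi+\operatorname{Im}\nu_1)=p(d\lambda_1+\operatorname{Im}\nu_1)$ without ever needing $\xi$ to be semisimple.
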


In order to verify Lemma \ref{technicalcontour}, we need an additional Lemma.

\begin{lemma}\label{orbit_equality} If $\xi$ is a regular, semisimple element in $i(\mathfrak{h}_1)^*$, then
\[\mathcal{O}_{\xi}^{M_1}+i(\mathfrak{g}/\mathfrak{p}_1)^* = \operatorname{Ad}^*(P_1)\cdot \xi.\]
\end{lemma}

\begin{proof}

Let $P_1=M_1A_1N_1$ be the Langlands decomposition of $P_1$. Since $M_1A_1$ preserves $i(\mathfrak{g}/\mathfrak{p}_1)^*$ and acts by automorphisms, it is enough to verify 
\begin{equation}\label{eq:orbit_equality}
\operatorname{Ad}^*(N_1)\cdot \xi=\xi+i(\mathfrak{g}/\mathfrak{p}_1)^*.
\end{equation}
To check this, choose $\mathfrak{a}_1\subset \widetilde{\mathfrak{a}_1}$ where $\widetilde{\mathfrak{a}_1}\subset \mathfrak{g}$ is the split part of a maximally split Cartan subalgebra $\widetilde{\mathfrak{h}_1}\subset \mathfrak{g}$. Let $\Sigma\subset i\widetilde{\mathfrak{a}_1}^*\setminus \{0\}$ denote the collection of restricted roots of $\mathfrak{g}$ with respect to $\widetilde{\mathfrak{a}_1}$.
By Proposition 7.76 of \cite{Kna05}, there exists a choice of positive restricted roots $\Sigma^+\subset \Sigma$ with simple roots $\Pi\subset \Sigma^+$ and a subset $\Pi'\subset \Pi$ for which 
\[\mathfrak{p}_1=Z_{\mathfrak{g}}(\widetilde{\mathfrak{a}_1})\oplus \sum_{\alpha\in \Sigma'}\mathfrak{g}_{\alpha}\]
where $\Sigma'=\Sigma^+\cup \{\alpha\in \Sigma\mid \alpha\in \operatorname{Span}(\Pi')\}$. Fix a nondegenerate, $G$-invariant bilinear form $B$ on $\mathfrak{g}$, use it to identify $i\mathfrak{g}\simeq i\mathfrak{g}^*$, and suppose $\xi$ corresponds to $X\in i\mathfrak{h}_1$. Under this isomorphism $i(\mathfrak{g}/\mathfrak{p}_1)^*$ corresponds to $i\mathfrak{n}_1$ where 
\[\mathfrak{n}_1=\bigoplus_{\alpha\in \Sigma^+\cap \operatorname{Span}(\Pi')} \mathfrak{g}_{\alpha}.\]
In order to verify (\ref{eq:orbit_equality}), it is enough to check that for every $Y\in i\mathfrak{n}_1$, there exists $n\in N_1=\exp(\mathfrak{n}_1)$ such that
\begin{equation}\label{eq:Kostant_equality}
\operatorname{Ad}(n)X=X+Y.
\end{equation}
Note that every element $\alpha\in \Sigma^+\cap \operatorname{Span}(\Pi')$ can be written uniquely as a sum of simple roots in $\Pi'$; the number of simple roots in this sum is called the length of $\alpha$ and is denoted by $l(\alpha)$. Define
\[(\mathfrak{n}_1)_m=\bigoplus_{\substack{\alpha\in \Sigma^+\cap \operatorname{Span}(\Pi')\\ l(\alpha)> m}} \mathfrak{g}_{\alpha}.\]
We will show by induction that for every $m\in \mathbb{Z}_{\geq 0}$ there exists $n_m\in \exp(\mathfrak{n}_1)$ such that 
\begin{equation}\label{eq:Kostant_equality_m}
\operatorname{Ad}(n_m)X-(X+Y)\in i(\mathfrak{n}_1)_{\mathfrak{m}}.
\end{equation}
Since $(\mathfrak{n}_1)_m=0$ for sufficiently large $m$, this will verify (\ref{eq:Kostant_equality}). The base case $m=0$ follows with $n_0=e=\exp(0)$. Assume (\ref{eq:Kostant_equality_m}) for a fixed non-negative integer $m$. For every restricted root $\alpha$ with $\mathfrak{g}_{\alpha}\subset \mathfrak{n}_1$ and $l(\alpha)=m+1$, let $X_{\alpha}$ denote the component of $\operatorname{Ad}(n_m)X-(X+Y)$ in the root space $i\mathfrak{g}_{\alpha}$. Observe $\alpha(X)\neq 0$ since $\xi$, and therefore $X$, was assumed to be regular. Hence, we may define
\[Z_{m+1}=\sum_{\substack{\alpha\in \Sigma^+\cap \operatorname{Span}(\Pi')\\ l(\alpha)=m+1}} \frac{-X_{\alpha}}{\alpha(X)}.\]
Then
\[\operatorname{Ad}(\exp(Z_{m+1}))\operatorname{Ad}(n_m)X=\operatorname{Ad}(\exp(Z_{m+1}))X+\operatorname{Ad}(\exp(Z_{m+1}))Y+W\]
where 
\[W=\operatorname{Ad}(\exp(Z_{m+1}))(\operatorname{Ad}(n_m)X-(X+Y))\in i(\mathfrak{n}_1)_{m+1}.\]
Moreover, 
\[\operatorname{Ad}(\exp(Z_{m+1}))X=X+\operatorname{ad}(Z_{m+1})X+W'\]
where $W'\in i(\mathfrak{n}_1)_{m+1}$. And
\[\operatorname{ad}(Z_{m+1})X=\sum_{\substack{\alpha\in \Sigma^+\cap \operatorname{Span}(\Pi')\\ l(\alpha)=m+1}} \frac{-[X_{\alpha},X]}{\alpha(X)}=\sum_{\substack{\alpha\in \Sigma^+\cap \operatorname{Span}(\Pi')\\ l(\alpha)=m+1}} X_{\alpha}.\]
Further,
\[\operatorname{Ad}(\exp(Z_{m+1}))Y=Y+W''\]
with $W''\in i(\mathfrak{n}_1)_{m+1}$. Putting $n_{m+1}=\exp(Z_{m+1})n_m$ and combining the above expressions, we obtain
\[\operatorname{Ad}(n_{m+1})X-(X+Y)\in i(\mathfrak{n}_1)_{m+1}.\]
Therefore, we have verified (\ref{eq:Kostant_equality_m}) for the integer $m+1$. The Lemma follows.
\end{proof}

Next, we prove Lemma \ref{technicalcontour}.

\begin{proof} First, suppose $\xi=d\lambda_1+\operatorname{Im}\nu\in i(\mathfrak{h}_1)^*$ is a regular, semisimple element. Since the fibers of $p$ are $G$ invariant, Lemma \ref{orbit_equality} implies 
\begin{align*}
\widetilde{X}(\Gamma_1)&=\operatorname{Ad}^*(K)\cdot(\mathcal{O}^{M_1}_{d\lambda_1+\operatorname{Im}\nu_1}+i(\mathfrak{g}/\mathfrak{p}_1)^*)\\
&=\operatorname{Ad}^*(K)\operatorname{Ad}^*(P_1)\cdot (d\lambda_1+\operatorname{Im}\nu_1)\\
&\subset p^{-1}(d\lambda_1+\operatorname{Im}\nu_1).
\end{align*}
The Lemma has been proven in the case when $d\lambda_1+\operatorname{Im}\nu_1$ is regular. 

Next, assume $d\lambda_1+\operatorname{Im}\nu_1$ is singular, fix $\eta\in \mathcal{O}_{D(\Lambda_1)}^{M_1}$, and decompose $\eta=\eta_s+\eta_n$ via the Jordan decomposition into semisimple and nilpotent parts. Note $\eta_s=d\lambda_1$, define $\mathfrak{l}:=Z_{\mathfrak{m}_1}(\eta_s)$ and $L:=Z_{M_1}(\eta_s)$, fix a nondegenerate $L$-invariant form $B$ on $\mathfrak{l}$ and use it to identify $\mathfrak{l}\simeq \mathfrak{l}^*$. Then, after possibly replacing $\eta_n$ by a conjugate under $L$, we may find an $\mathfrak{sl}_2$-triple $\{h,e,f\}$ in $\mathfrak{l}$ for which $ie$ corresponds to $\eta_n$ under $B$ and $e-f\in \mathfrak{h}_1'$ (see part (c) of Lemma B of \cite{Ro82}). As remarked on page 223 of \cite{Ro82}, one observes 
\[e=\lim_{t\rightarrow \infty} e^{-2t}\operatorname{Ad}(\exp(th))(e-f).\]
Suppose $i(e-f)$ corresponds to the element $\zeta\in i\mathfrak{h}_1^*$ under $B$, and define $\zeta_t=e^{-2t}\zeta$. Then, for any $\eta'\in i(\mathfrak{g}/\mathfrak{p}_1)^*$, we have
\[\eta+\operatorname{Im}\nu_1+\eta'=\lim_{t\rightarrow \infty} \operatorname{Ad}^*(\exp(th))(\zeta_t)+d\lambda_1+\operatorname{Im}\nu_1+\eta'.\]
Putting $m_t=\exp(th)\in M_1$ and noting that $\operatorname{Ad}^*(m_t)$ stabilizes $d\lambda_1$, $\operatorname{Im}\nu_1$ and preserves $i(\mathfrak{g}/\mathfrak{p}_1)^*$, one obtains
\[\eta+\operatorname{Im}\nu_1+\eta'=\lim_{t\rightarrow \infty} \operatorname{Ad}^*(m_t)\cdot (\zeta_t+ d\lambda_1+\operatorname{Im}\nu_1+\eta'')\]
with $\eta''\in i(\mathfrak{g}/\mathfrak{p}_1)^*$. If $q\in \operatorname{Pol}(\mathfrak{g}_{\mathbb{C}}^*)^{G_{\mathbb{C}}}$, then by continuity and invariance, 
\begin{align*}
q(\eta+\operatorname{Im}\nu_1+\eta')&=q\left(\lim_{t\rightarrow \infty} \operatorname{Ad}^*(m_t)\cdot (\zeta_t+ d\lambda_1+\operatorname{Im}\nu_1+\eta'')\right)\\
&=\lim_{t\rightarrow \infty} q\left(\zeta_t+ d\lambda_1+\operatorname{Im}\nu_1+\eta''\right).
\end{align*}
Next, since $\zeta_t+d\lambda_1+\operatorname{Im}\nu_1$ is regular and semisimple, by Lemma \ref{orbit_equality}, we deduce that $\zeta_t+d\lambda_1+\operatorname{Im}\nu_1$ and $\zeta_t+d\lambda_1+\operatorname{Im}\nu_1+\eta''$ lie in the same coadjoint $G$-orbit and 
\[q(\zeta_t+d\lambda_1+\operatorname{Im}\nu_1+\eta'')=q(\zeta_t+d\lambda_1+\operatorname{Im}\nu_1).\]
Applying continuity of $q$ one more time, we have
\[q(\eta+\operatorname{Im}\nu_1+\eta'')=q(d\lambda_1+\operatorname{Im}\nu_1).\]
Lemma \ref{technicalcontour} follows.
\end{proof}
\bigskip

Now, we will finally finish off the proof of Lemma \ref{singularbound}. Suppose $\xi\in i(\mathfrak{g}^*)'$. By (\ref{eq:singularbound_reduction}) we must show 
\[\xi\notin\operatorname{AC}\left(\bigcup_{J(\Gamma_1)\in (\widehat{G}_{\text{ext},s})_{H,H_1}}\widetilde{X}(\Gamma_1)\right)\]
where $\widetilde{X}(\Gamma_1)=K\cdot (\mathcal{O}^{M_1}_{D(\Lambda_1)}+\operatorname{Im}\nu_1+i(\mathfrak{g}/\mathfrak{p}_1)^*)$. Taking the union separately over $J(\Gamma_1)$ with $d\lambda_1+\operatorname{Im}\nu_1$ singular and $J(\Gamma_1)$ with $d\lambda_1+\operatorname{Im}\nu_1$ regular and applying Lemma \ref{tinylemma}, it is enough to show 
\begin{equation}\label{eq:singularcase}
\xi\notin\operatorname{AC}\left(\bigcup_{\substack{J(\Gamma_1)\in (\widehat{G}_{\text{ext},s})_{H,H_1}\\ d\lambda_1+\operatorname{Im}\nu_1\ \text{singular}}}\widetilde{X}(\Gamma_1)\right)
\end{equation}
and 
\begin{equation}\label{eq:regularcase}
\xi\notin\operatorname{AC}\left(\bigcup_{\substack{J(\Gamma_1)\in (\widehat{G}_{\text{ext},s})_{H,H_1}\\ d\lambda_1+\operatorname{Im}\nu_1\ \text{regular}}}\widetilde{X}(\Gamma_1)\right).
\end{equation}
We begin by checking (\ref{eq:singularcase}). If $d\lambda_1+\operatorname{Im}\nu_1$ is singular, by Lemma \ref{technicalcontour}\[\widetilde{X}(\Gamma_1)\subset p^{-1}(d\lambda_1+\operatorname{Im}\nu_1)\subset i\mathfrak{g}^*_s.\]
Therefore, the union in (\ref{eq:singularcase}) is entirely contained in $i\mathfrak{g}_s^*$. Since the asymptotic cone of any subset of the singular set is contained in the singular set and $\xi\in i(\mathfrak{g}^*)'$ is regular, (\ref{eq:singularcase}) follows.

To check (\ref{eq:regularcase}), observe Lemma \ref{orbit_equality} implies
\[\widetilde{X}(\Gamma_1)=K\cdot (\mathcal{O}^{M_1}_{D(\Lambda_1)}+\operatorname{Im}\nu_1+i(\mathfrak{g}/\mathfrak{p}_1)^*)\subset \mathcal{O}_{d\lambda_1+\operatorname{Im}\nu_1}^G:=\operatorname{Ad}^*(G)\cdot (d\lambda_1+\operatorname{Im}\nu_1)\]
whenever $d\lambda_1+\operatorname{Im}\nu_1$ is regular. In particular, it is enough to check
\begin{equation}\label{eq:regularcase_invariant}
\xi\notin\operatorname{AC}\left(\bigcup_{\substack{J(\Gamma_1)\in (\widehat{G}_{\text{ext},s})_{H,H_1}\\ d\lambda_1+\operatorname{Im}\nu_1\ \text{regular}}}\mathcal{O}_{d\lambda_1+\operatorname{Im}\nu_1}^G\right)
\end{equation}
and the set on the right is $G$-invariant. If $\xi$ is not conjugate to an element of $\mathfrak{h}_1^*$, then choose a Cartan $\xi\in \mathfrak{h}_2^*$ and put 
\[\mathcal{C}:=\operatorname{Ad}^*(G)\cdot (\mathfrak{h}_2^*)'.\]
Then $\xi\in \mathcal{C}\subset \mathfrak{g}^*$ is an open cone that does not intersect 
\[\mathcal{O}_{d\lambda_1+\operatorname{Im}\nu_1}^G\subset \operatorname{Ad}^*(G)\cdot (d\lambda_1+\operatorname{Im}\nu_1)\]
if $J(\Gamma_1)\in (\widehat{G}_{\text{ext},s})_{H,H_1}$. Hence, (\ref{eq:regularcase_invariant}) follows. Therefore, we may assume without loss of generality that $\xi$ is conjugate to an element in $i\mathfrak{h}_1^*$. In fact, since the right hand side of (\ref{eq:regularcase_invariant}) is $G$-invariant, we may assume $\xi\in i\mathfrak{h}_1^*$ without loss of generality.

Recall we fixed a $W_{1,\mathbb{C}}$-invariant norm $|\cdot|$ on $(\mathfrak{h}_1)_{\mathbb{C}}^*$, and we have a corresponding invariant distance function. Note there exists a constant $c>0$ such that
\[\operatorname{dist}(\xi,i\mathfrak{h}_s^*)>c\]
and deduce
\[\operatorname{dist}(t\xi,i\mathfrak{h}_s^*)>tc\]
for all $t>0$. Next, for $\epsilon>0$, define the open cone
\[\mathcal{C}_{\epsilon}(\xi)=\left\{\eta\in i\mathfrak{h}_1^*\setminus \{0\}\mid \left|\frac{\eta}{|\eta|}-\frac{\xi}{|\xi|}\right|<\epsilon\right\} \]
or equivalently 
\[\mathcal{C}_{\epsilon}(\xi)=\left\{\eta\in i\mathfrak{h}_1^*\mid \left|\eta-\frac{|\eta|}{|\xi|}\xi\right|<\epsilon|\eta|\right\}\setminus \{0\}.\]
For $\eta\in \mathcal{C}_{\epsilon}(\xi)$ the triangle inequality yields
\begin{align*}
\operatorname{dist}(\eta,i(\mathfrak{h}_1)_s^*)&\geq \operatorname{dist}\left(\frac{|\eta|}{|\xi|}\xi,i(\mathfrak{h}_1)_s^*\right)-\operatorname{dist}\left(\frac{|\eta|}{|\xi|}\xi,\eta\right)\\
&\geq \frac{|\eta|}{|\xi|}c-\epsilon|\eta|\\
&=|\eta|\left(\frac{c}{|\xi|}-\epsilon\right).
\end{align*}
Now, recall from (\ref{eq:bound}) that 
\[\operatorname{dist}(d\lambda_1+\operatorname{Im}\nu_1,i\mathfrak{h}^*_s)\leq 2d\]
for a fixed constant $d$ whenever $J(\Gamma_1)\in (\widehat{G}_{\text{ext},s})_{H,H_1}$. Combined with the previous inequality, it implies that so long as $\epsilon<\frac{c}{|\xi|}$, every $d\lambda_1+\operatorname{Im}\nu_1\in \mathcal{C}_{\epsilon}(\xi)$ with $J(\Gamma_1)\in (\widehat{G}_{\text{ext},s})_{H,H_1}$ and $d\lambda_1+\operatorname{Im}\nu_1$ regular satisfies 
\[|d\lambda_1+\operatorname{Im}\nu_1|< r(\xi)\]
for some fixed constant $r(\xi)>0$. 

Now, there are finitely many elements of the form $\operatorname{Ad}^*(g)\cdot \xi\in i\mathfrak{h}_1^*$
with $g\in G$ and every such element can be written in the form $w\cdot \xi$ for $w\in W_{1,\mathbb{C}}$; we call $W_{1,\mathbb{R}}$ the collection of such $w$. Therefore, we may form the cone $\mathcal{C}_{\epsilon}(w\cdot \xi)$ for each $w\in W_{1,\mathbb{R}}$, and we may put
\[\mathcal{C}_{\epsilon}:=\bigcup_{w\in W_{1,\mathbb{R}}} \mathcal{C}_{\epsilon}(w\cdot \xi).\]
Notice $\operatorname{Ad}^*(G)\cdot \mathcal{C}_{\epsilon}\cap i\mathfrak{h}_1^*=\mathcal{C}_{\epsilon}$. In addition, repeating the above argument with $\xi$ replaced by $w\cdot \xi$ for all $w\in W_{1,\mathbb{R}}$, one deduces that, after choosing $\epsilon>0$ sufficiently small, there exists $r>0$ for which 
\[|d\lambda_1+\operatorname{Im}\nu_1|< r\]
for every $d\lambda_1+\operatorname{Im}\nu_1\in \mathcal{C}_{\epsilon}$ with $J(\Gamma_1)\in (\widehat{G}_{\text{ext},s})_{H,H_1}$ and $d\lambda_1+\operatorname{Im}\nu_1$ regular. In particular, all such parameters are contained in the ball $B_r(0)\subset i\mathfrak{h}_1^*$.

Finally, choose a precompact, open subset $e\in U\subset G$, and define 
\[\widetilde{\mathcal{C}}_{\epsilon}:=\operatorname{Ad}^*(U)\cdot \mathcal{C}_{\epsilon}.\]
Note $\xi\in \widetilde{\mathcal{C}}_{\epsilon}\subset i\mathfrak{g}^*$ is an open cone. Further, $\eta\in \mathcal{O}^G_{d\lambda_1+\operatorname{Im}\nu_1}$ intersects $\widetilde{\mathcal{C}}_{\epsilon}$ only if $d\lambda_1+\operatorname{Im}\nu_1\in \mathcal{C}_{\epsilon}$ and if true the intersection is 
\[\operatorname{Ad}^*(U)\cdot (d\lambda_1+\operatorname{Im}\nu_1)\subset \operatorname{Ad}^*(U)\cdot B_r(0).\]
The set $\operatorname{Ad}^*(U)\cdot B_r(0)$ is precompact since $U$ is precompact. We conclude 
\[\widetilde{\mathcal{C}}_{\epsilon}\cap \left(\bigcup_{\substack{J(\Gamma_1)\in (\widehat{G}_{\text{ext},s})_{H,H_1}\\ d\lambda_1+\operatorname{Im}\nu_1\ \text{regular}}}\mathcal{O}_{d\lambda_1+\operatorname{Im}\nu_1}^G\right)\]
is bounded; (\ref{eq:regularcase_invariant}) follows. Lemma \ref{singularbound} has been proven.
\end{proof}

\section{On Integrals of Characters and Contours}

Now, suppose $\pi$ is a unitary representation of a real, reductive algebraic group $G$. We may write 
$$\pi\simeq \int_{J(\Gamma)\in \widehat{G}}J(\Gamma)^{\oplus m(\pi,J(\Gamma))} d\mu_{\Gamma}$$
as a direct integral of irreducible, unitary representations. The positive measure $\mu$ on $\widehat{G}$ is only unique up to an equivalence relation. We say $\mu$ and $\mu'$ are equivalent if and only if $\mu$ and $\mu'$ are absolutely continuous with respect to each other (see for instance Chapter 8 of \cite{Di} for this general theory). We can always find a positive measure $\mu'$ that is equivalent to $\mu$ for which $\mu'(\widehat{G})<\infty$. Hence, without loss of generality, we will assume from now on that $\mu$ is a finite positive measure on $\widehat{G}$. In addition, we will often identify $\widehat{G}$ with the set of Langlands parameters $\Gamma$ such that $J(\Gamma)$ is unitary, and we will write $\mu$ both for the measure on $\widehat{G}$ and the measure on the corresponding set of parameters. 

The next step in proving Theorem \ref{maintheorem} is to study integrals of irreducible characters.

\begin{lemma} \label{chint} Suppose $\mu$ is a finite, positive measure on $\widehat{G}$. 
\begin{enumerate}
\item If we form the integral 
\[C(\mu)=\int_{J(\Gamma)\in \widehat{G}} C(\Gamma)d\mu_{\Gamma}\]
then the functional
\[\omega\mapsto \langle C(\mu),\mathcal{F}[\omega] \rangle:=\int_{J(\Gamma)\in \widehat{G}}\langle C(\Gamma),\mathcal{F}[\omega]\rangle d\mu_{\Gamma}\]
defines a distribution on $\mathfrak{g}$ for $\omega\in \mathcal{D}_c^{\infty}(\mathfrak{g})$. By Proposition \ref{prop:Duflo_Rossmann}, we may also write this distribution as
\[\omega\mapsto \langle \theta(\mu), \omega\rangle:=\int_{J(\Gamma)\in \widehat{G}}\langle \theta(\Gamma), \omega\rangle d\mu_{\Gamma}\]
where  
\[\theta(\mu):=\int_{J(\Gamma)\in \widehat{G}} \theta(\Gamma)d\mu_{\Gamma}.\]
\item For every smooth, compactly supported density, $\omega$, on $\mathfrak{g}$, the convolution $$C(\mu)*\mathcal{F}[\omega]$$ 
defines a smooth, polynomially bounded function on $i\mathfrak{g}^*$. 
\end{enumerate}
\end{lemma}

\begin{proof}
To begin the proof, we first show that 
$$\omega\mapsto \langle C(\mu),\mathcal{F}[\omega]\rangle$$
is a distribution on $\mathfrak{g}$. Fix $S\subset \mathfrak{g}$ a compact set, and let $\mathcal{D}_S^{\infty}(\mathfrak{g})$ be the space of smooth densities, $\omega$, supported in $S$. It is enough to show $\omega\mapsto \langle C(\mu),\mathcal{F}[\omega]\rangle$ is a continuous map on $\mathcal{D}_S^{\infty}(\mathfrak{g})$ for every compact $S\subset \mathfrak{g}$. Since $\mu$ is a finite measure on $\widehat{G}$, it is enough to show that for every $J(\Gamma)\in \widehat{G}$, the maps $\omega\mapsto \langle C(\Gamma),\mathcal{F}[\omega]\rangle$ are equicontinuous in $\Gamma$, in the sense that all of these expressions can be bounded in absolute value by a single seminorm on $\mathcal{D}_S^{\infty}(\mathfrak{g})$ whenever $\omega\in \mathcal{D}_S^{\infty}(\mathfrak{g})$.

As in the proof of Lemma \ref{singularbound}, we define
$$\widehat{G}_{\text{ext}}=\{J(\Gamma_1)|\ \exists\ \Gamma\ \text{s.t.}\ J(\Gamma)\in \widehat{G}\ \text{and}\ M_{\Gamma_1,\Gamma}\neq 0\}.$$
As in the proof of Lemma \ref{singularbound}, if $\Gamma=(H,\gamma,R_{i\mathbb{R}}^+)$ is a Langlands parameter, let $H(\Gamma):=H$ denote the associated Cartan subgroup. Moreover, if $H$ and $H_1$ are Cartan subgroups of $G$, define
$$(\widehat{G}_{\text{ext}})_{H,H_1}$$
$$=\{J(\Gamma_1)|\ H_1=H(\Gamma_1),\ \exists\ \Gamma\ \text{s.t.}\ M_{\Gamma_1,\Gamma}\neq \{0\}\ \&\ J(\Gamma)\in \widehat{G},\ H=H(\Gamma)\}.$$
Up to conjugacy, there are a finite number of Cartan subgroups $H\subset G$; therefore, we have divided the extended unitary dual $\widehat{G}_{\text{ext}}$ into a finite number of pieces $(\widehat{G}_{\text{ext}})_{H,H_1}$. We recall that we may write
$$C(\Gamma)=\sum_{\Gamma_1} M_{\Gamma_1,\Gamma}X(\Gamma_1).$$
Further, the coefficients $M_{\Gamma_1,\Gamma}$ are uniformly bounded over all Langlands parameters $\Gamma$ and $\Gamma_1$ for $G$. This follows from Vogan's work on the Jantzen filtration (see \cite{Vo84} for the original reference or Section 14 of \cite{ALTV} for an exposition) together with the Jantzen-Juckerman translation principle (see \cite{Zu77}, Chapter 7 of \cite{Vo81} for the original references or Section 16 of \cite{ALTV} for an exposition).

Combining all of this information, we come to the following conclusion. For every compact $S\subset \mathfrak{g}$, it is enough to give a bound on $\langle X(\Gamma_1),\mathcal{F}[\omega]\rangle$ for every $\omega\in \mathcal{D}_S^{\infty}(\mathfrak{g})$ in terms of seminorms of this space applied to $\omega$ and uniformly in $\Gamma_1$ with $J(\Gamma_1)\in (\widehat{G}_{\text{ext}})_{H,H_1}$. From now on, we fix two such Cartan subgroups $H,H_1\subset G$.

Recall that we have the decomposition $$X(\Gamma_1)=K\cdot \left(\mathcal{O}_{D(\Lambda_1)}+\nu_1+i(\mathfrak{g}/\mathfrak{p}_1)^*\right).$$
For each $k\in K$, define
$$X_k(\Gamma_1)=k\cdot \left(\mathcal{O}_{D(\Lambda_1)}+\nu_1+i(\mathfrak{g}/\mathfrak{p}_1)^*\right).$$
Since the measure on $K$ is invariant and therefore finite, we conclude that for every compact $S\subset \mathfrak{g}$, it is enough to give a bound on $\langle X_k(\Gamma_1),\mathcal{F}[\omega]\rangle$ for every $\omega\in \mathcal{D}_S^{\infty}(\mathfrak{g})$ in terms of seminorms of this space applied to $\omega$ and uniformly in $\Gamma_1$ with $J(\Gamma_1)\in (\widehat{G}_{\text{ext}})_{H,H_1}$ and uniformly in $k\in K$.

If $V$ is a real, finite dimensional vector space and $U\subset V$ is an open subset, let $\mathcal{S}(U)$ denote the Schwartz space on $U\subset V$. This is the space of all $\varphi\in C^{\infty}(U)$ for which $|D\varphi|$ is a bounded function on $U$ for every linear partial differential operator with polynomial coefficients $D$ on $V$. Let $H_1=H(\Gamma_1)$ be the Cartan subgroup corresponding to $\Gamma_1$, write $H_1=T_1A_1$ as a compact piece times a vector part, and let $M(\Gamma_1)=M_1$ be the standard reductive piece of the centralizer of $A_1$, $Z_G(A_1)=M_1A_1$. Let $\mathfrak{h}_1$ (resp. $\mathfrak{m}_1$) denote the Lie algebra of $H_1$ (resp. $M_1$). Recall Harish-Chandra's invariant integral map
$$\psi:\ \mathcal{S}(i\mathfrak{m}_1^*)\rightarrow \mathcal{S}((i\mathfrak{h}_1^*)')$$
by
$$\varphi\mapsto \psi_{\varphi}$$
where $$\psi_{\varphi}(\lambda)=\langle \varphi,\mathcal{O}^{M_1}_{\lambda} \rangle.$$

Harish-Chandra proved that his invariant integral map is continuous (see Theorem 3 of \cite{HC57}). Fix norms, both written $|\cdot|$, on the finite dimensional vector spaces $i\mathfrak{h}^*$ and $i\mathfrak{m}^*$. Harish-Chandra showed that there exists $l\in \mathbb{N}$ and a constant $b>0$ for which we have the estimate
$$\sup_{\xi\in i(\mathfrak{h}_1^*)'}|\psi_{\varphi}(\xi)|\leq b \sup_{\eta\in i\mathfrak{m}_1^*} (1+|\eta|)^l|\varphi(\eta)|.$$
This estimate follows from (the stronger estimate in) Lemma 7 on page 203 of \cite{HC57} (note that the constant $q$ in that Lemma can be chosen to be at least as large as one by the proof of Lemma 5 of \cite{HC57}). Further, by Lemma 22 on page 576 of \cite{HC64}, the function $\psi_{\varphi}$ extends to a smooth function on the closure of each Weyl chamber in $\mathfrak{h}^*$; the above bound extends to these values of Harish-Chandra's invariant integral by continuity. It follows from Supplement A and Supplement C on page 218 of \cite{Ro82} that for every map
$$\varphi\mapsto \langle \mathcal{O}_{D(\Lambda_1)}^{M_1},\varphi\rangle$$ 
with $\varphi\in \mathcal{S}(i\mathfrak{m}_1^*)$, there exists a closed Weyl chamber $i(\mathfrak{h}_1)^*_+\subset i\mathfrak{h}_1^*$ and (a possibly singular) $\lambda_0\in i(\mathfrak{h}_1)^*_+$ for which this map can be written as 
$$\varphi\mapsto \lim_{\substack{\lambda\rightarrow \lambda_0\\ \lambda\in i((\mathfrak{h}_1)^*_+)'}}\psi_{\varphi}(\lambda).$$ It follows that we have a uniform bound on $|\langle \mathcal{O}_{D(\Lambda_1)}^{M_1},\varphi\rangle|$ over all $\Lambda_1$.

Next, we define 
$$\widetilde{X}_k(\Gamma_1)=k\cdot \left(\mathcal{O}_{D(\Lambda_1)}^{M_1}+\operatorname{Im}\nu_1+i(\mathfrak{g}/\mathfrak{p}_1)^*\right)$$
and we note $X_k(\Gamma_1)=\widetilde{X}_k(\Gamma_1)+k\cdot \operatorname{Re}\nu_1$. As in the proof of Lemma \ref{singularbound}, we have decomposed $\nu_1=\operatorname{Re}\nu_1+\operatorname{Im}\nu_1$ with $\operatorname{Re}\nu_1\in \mathfrak{a}^*$ and $\operatorname{Im}\nu_1\in i\mathfrak{a}^*$. We observe that each $\widetilde{X}_k(\Gamma_1)$ is a tempered distribution on $i\mathfrak{g}^*$ since it is a linear transformation applied to a product of tempered distributions on $i\mathfrak{m}^*$ and $i(\mathfrak{g}/\mathfrak{p})^*$. Further, the fact that $k\in K$ is compact combined with our previous uniform bound on the tempered distributions $\mathcal{O}^{M_1}_{D(\Lambda_1)}$ yields the following uniform bound on $\widetilde{X}_k(\Gamma_1)$. Then there exists a natural number $l\in \mathbb{N}$ and a positive constant $b>0$ such that
\begin{equation}\label{eq:Harish-Chandra_bound}
|\langle \widetilde{X}_k(\Gamma_1),\varphi\rangle|\leq b\sup_{\xi\in i\mathfrak{g}^*} (1+|\xi|)^l |\varphi(\xi)|
\end{equation}
for all $k\in K$, all Langlands parameters $\Gamma_1$ with $J(\Gamma_1)\in (\widehat{G}_{\text{ext}})_{H,H_1}$, and all $\varphi\in \mathcal{S}(i\mathfrak{g}^*)$.

All that is left to do is to deal with the translation by $k\cdot \operatorname{Re}\nu_1$. If $S\subset \mathfrak{g}$ is a compact set, then by the Paley-Wiener Theorem (see for instance page 181 of \cite{Hor83}), every smooth density $\omega$ supported in $S$ satisfies a sequence of estimates
\begin{equation}\label{eq:Paley_Wiener}
\left| \mathcal{F}[\omega](\xi)\right|\leq \frac{A_ne^{B|\operatorname{Re}\xi|}}{(1+|\operatorname{Im}\xi|)^n}.
\end{equation}
If $\mathcal{D}_S^{\infty}(\mathfrak{g})$ is the space of smooth densities supported in $S\subset \mathfrak{g}$, then the constant $B>0$ depends on $S$ but not on $\omega\in \mathcal{D}_S^{\infty}(\mathfrak{g})$. Moreover, there exist seminorms $d_n$ on $\mathcal{D}_S^{\infty}(\mathfrak{g})$ such that $A_n(\omega)\leq d_n(\omega)$ for all $n\in \mathbb{N}$ and $\omega\in \mathcal{D}_S^{\infty}(\mathfrak{g})$ (see for instance page 181 of \cite{Hor83}).

By Lemma \ref{realpartlemma}, the set of all $k\cdot \nu_1$ with $k\in K$ and $J(\Gamma_1)\in (\widehat{G}_{\text{ext}})_{H,H_1}$ is contained in a bounded subset of $\mathfrak{h}_1^*$. In particular, $|k\cdot \nu_1|\leq c$ is bounded by a positive constant $c>0$ for all $\Gamma_1$ with $J(\Gamma_1)\in (\widehat{G}_{\text{ext}})_{H,H_1}$ and all $k\in K$. Thus, if $\omega\in \mathcal{D}_S^{\infty}(\mathfrak{g})$, then we deduce
$$|\langle X(\Gamma_1),\mathcal{F}[\omega]\rangle|\leq bA_le^{cB}$$
for every $\Gamma_1$ with $J(\Gamma_1)\in (\widehat{G}_{\text{ext}})_{H,H_1}$. Observe that the constants $b$, $c$, and $B$ do not depend on $\omega\in \mathcal{D}_S^{\infty}(\mathfrak{g})$. Further, the constant $A_l$ is bounded by a finite sum of seminorms on $\mathcal{D}_S^{\infty}(\mathfrak{g})$ as remarked above. It now follows that the linear functional
$$\omega\mapsto \langle C(\mu),\mathcal{F}[\omega]\rangle$$
defines a distribution on $\mathfrak{g}$ for every finite, positive measure $\mu$ on $\widehat{G}$. This proves part (1).


For part (2), we first show polynomial boundedness. Note 
$$(C(\mu)*\mathcal{F}[\omega])(\eta)=\langle C(\mu)_{\xi},\mathcal{F}[\omega](\eta-\xi)\rangle$$
if $\eta\in i\mathfrak{g}^*$. Utilizing (\ref{eq:Paley_Wiener}), we have
$$\left| \mathcal{F}[\omega](\eta-\xi)\right|\leq \frac{A_ne^{B|\operatorname{Re}(\eta-\xi)|}}{(1+|\operatorname{Im}(\eta-\xi)|)^n}\leq (1+|\operatorname{Im}\eta|)^n\frac{A_ne^{B|\operatorname{Re}\xi|}}{(1+|\operatorname{Im}\xi|)^n}.$$
The triangle inequality was used to obtain the inequality on the right. Then, following the argument above, we obtain
$$\left|(C(\mu)*\mathcal{F}[\omega])(\eta)\right|\leq c_{\omega} (1+|\operatorname{Im}\eta|)^l$$
for some constant $c_{\omega}>0$. 

Next, by the Paley-Weiner Theorem, one has bounds on the derivatives of $\mathcal{F}[\omega]$ analogous to the above bounds on $\mathcal{F}[\omega]$. These bounds allow us to differentiate under the integral sign. Smoothness of the convolution follows. 
\end{proof}
 
Suppose $V$ is a finite dimensional, real vector space, and let $v_1,\ldots,v_n$ be a basis for $V$. If $\alpha=(\alpha_1,\ldots,\alpha_n)$, denote 
$$D^{\alpha}=v_1^{\alpha_1}\cdots v_n^{\alpha_n},$$
which may be thought of as a differential operator on $V$. Suppose $0\in U_1\subset U$ are open, precompact sets in $V$ with $U_1$ compactly contained in $U$, and suppose $\{\varphi_{N,U_1,U}\}$ is a family of smooth functions depending on $N\in \mathbb{N}$ such that
\begin{enumerate} 
\item $\varphi_{N,U_1,U}(x)=1$ if $x\in U_1$ for all $N\in \mathbb{N}$.
\item $\varphi_{N,U_1,U}(x)=0$ if $x\notin U$ for all $N\in \mathbb{N}$.
\item For every multi-index $\alpha$, there exists a constant $C_{\alpha}>0$ such that
$$\sup_{x\in U}|D^{\alpha+\beta}\varphi_{N,U_1,U}(x)|\leq C_{\alpha}^{|\beta|+1}(N+1)^{|\beta|}$$
if $|\beta|\leq N$.
\end{enumerate}
Such families $\{\varphi_{N,U_1,U}\}$ always exist (see pages 25-26, 282 of \cite{Hor83}). 

\begin{lemma} \label{FphiN} Suppose $V$ is a finite dimensional real vector space, suppose $\varphi_{N,U_1,U}$ is a family of smooth, compactly supported functions on $V$ satisfying the above properties, and let $|\cdot|$ be a norm on $V_{\mathbb{C}}^*$. Write
$$\mathcal{F}[\varphi_{N,U_1,U}]=\widetilde{\mathcal{F}}[\varphi_{N,U_1,U}]d\xi$$
where $d\xi$ is a Lebesgue measure on $iV^*$ and $\widetilde{\mathcal{F}}[\varphi_{N,U_1,U}]$ is an analytic function on $iV^*$. Then there exist constants $B>0$ and $C>0$ such that
\begin{equation}\label{eq:Paley_Weiner_bound}
|\widetilde{\mathcal{F}}[\varphi_{N,U_1,U}](\xi)|\leq \frac{C^{N+1}(N+1)^Ne^{B|\operatorname{Re}\xi|}}{(1+|\operatorname{Im}\xi|)^{N}}
\end{equation}
for $\xi\in V_{\mathbb{C}}^*$.
\end{lemma} 

This estimate follows immediately from the proof of the Paley-Wiener Theorem (see the proof of Theorem 7.3.1 on page 181 of \cite{Hor83}).

\begin{lemma} [H\"{o}rmander] \label{Hormander} Let $V$ be a finite dimensional real vector space, and let $|\cdot|$ be a norm on $V_{\mathbb{C}}^*$, the complexification of the real dual space of $V$. Suppose we have a collection of pairs $\{(T_{\alpha},\xi_{\alpha})\}_{\alpha\in \mathcal{A}}$ with $T_{\alpha}$ a tempered distribution on $iV^*$ and $\xi_{\alpha}\in V^*$ for every $\alpha\in \mathcal{A}$. In addition, suppose they satisfy the following properties.  
\begin{enumerate}
\item There exists an upper bound $R$ on the collection of real numbers $|\xi_{\alpha}|$.
\item There exists a constant $b>0$ and a natural number $l\in \mathbb{N}$ such that 
$$\left|\langle T_{\alpha}, \varphi\rangle\right|\leq b\sup_{\xi\in iV^*} (1+|\xi|)^l |\varphi(\xi)|$$
for every Schwartz function $\varphi\in \mathcal{S}(iV^*)$ and for every $\alpha\in \mathcal{A}$.
\end{enumerate}
If $\xi\in V_{\mathbb{C}}^*$, write $\xi=\operatorname{Re}\xi+\operatorname{Im}\xi$ with $\operatorname{Re}\xi\in V^*$ and $\operatorname{Im}\xi\in iV^*$. Suppose $\psi\in C^{\infty}(V_{\mathbb{C}}^*)$ and suppose that for every $r>0$ and every $N\in \mathbb{N}$, there exists a constant $b_N(r)>0$ such that
\begin{equation}\label{eq:Hormander_bound1}
|\psi(\xi)|\leq b_N(r)(1+|\operatorname{Im}\xi|)^{-N}
\end{equation}
whenever $|\operatorname{Re}\xi|\leq r$. 
If $\Omega=\bigcup_{\alpha} \operatorname{supp}T_{\alpha}$ and $\eta\in iV^*\setminus \operatorname{AC}(\Omega)$, then there exists an open set $\eta\in W\subset iV^*$ and a constant $c>0$ such that for every natural number $N\in \mathbb{N}$, we have
\begin{equation}\label{eq:Hormander_bound2}
\left|\langle (T_{\alpha})_{\xi},\psi(t\eta'-(\xi-\xi_{\alpha}))\rangle\right|\leq c\cdot b_{N+l}(R)(1+t)^{-N}
\end{equation}
for all $\alpha\in \mathcal{A}$, all $\eta'\in W$ and all $t>0$.
\end{lemma}

\begin{proof}
This Lemma is a slight generalization of Lemma 8.1.7 of \cite{Hor83}. For the convenience of the reader, we give a self-contained argument. Since $\operatorname{AC}(\Omega)\subset iV^*$ is a closed cone and $\eta\notin \operatorname{AC}(\Omega)$, we may find open cones $\mathcal{C},\mathcal{C}_1\subset iV^*$ with 
\[\eta\in \mathcal{C}\subset \overline{\mathcal{C}}\setminus \{0\}\subset iV^*\setminus \operatorname{AC}(\Omega).\]
Following H\"{o}rmander, we claim there exist $C'>0$ and $\epsilon'>0$ such that 
\begin{equation}\label{eq:Hormander_cone}
|\eta'-\xi|\geq \epsilon' |\eta'|\ \text{if}\ \xi\in \Omega,\ \eta'\in \overline{\mathcal{C}},\ \text{and}\ |\eta'|> C'.
\end{equation}
If not, one could find sequences $\{\xi_j\}\subset \Omega$ and $\{\eta_j'\}\subset \overline{\mathcal{C}}$ with $|\eta_j'-\xi_j|<|\eta_j'|/j$ and $|\eta_j'|>j$. By compactness of the unit sphere, one deduces that the sequence $\{\xi_j/|\eta_j'|\}$ has a limit point in $\operatorname{AC}(\Omega)\cap (\overline{\mathcal{C}}\setminus \{0\})$. This is a contradiction since these two sets have empty intersection by definition.


Next, we claim that we may choose a sufficiently small open subset $\eta\in W\subset \mathcal{C}$ and constants $\epsilon>0$ and $C>0$ for which
\begin{equation}\label{eq:modified_cone}
|t\eta'-\xi|\geq t\epsilon \ \text{if}\ \eta'\in W,\ \xi\in \Omega,\ \text{and}\ t\geq C.
\end{equation}
Indeed, for $\delta>0$, define $W_{\delta}=\{\eta'\in \mathcal{C}\mid e^{-\delta}|\eta|<|\eta'|<e^{\delta}|\eta|\}$. If $\eta'\in W_{\delta}$ with $|t\eta'|\geq C'$, then by (\ref{eq:Hormander_cone}), 
\[|t\eta'-\xi|\geq \epsilon'|t\eta'|\geq t\epsilon'e^{-\delta}|\eta|\geq t\epsilon\]
where $\epsilon:=\epsilon'e^{-\delta}$. Further, $|t\eta'|\geq C'$ if $te^{\delta}|\eta|\geq C'$ if $t\geq C'e^{-\delta}|\eta|^{-1}$. Therefore, if $C:=C'e^{-\delta}|\eta|^{-1}$ and $\epsilon:=\epsilon'e^{-\delta}$, then (\ref{eq:modified_cone}) follows from (\ref{eq:Hormander_cone}) with $W:=W_{\delta}$ for some fixed $\delta>0$.

Now, fix a cutoff function $\chi\in C^{\infty}(iV^*)$ with $\chi(\xi)=1$ if $|\xi|\geq 1$, $\chi(\xi)=0$ if $|\xi|\leq \frac{1}{2}$, and $0\leq \chi(\xi)\leq 1$ for all $\xi\in iV^*$. Moreover, define $\chi_s(\xi)=\chi(\xi/s)$ for $s>0$. Next, choose $\psi$ satisfying (\ref{eq:Hormander_bound1}). Notice if $t\geq C$ and $s\leq t\epsilon$, then by (\ref{eq:modified_cone})
\[\left|\langle (T_{\alpha})_{\xi}, \psi(t\eta'-(\xi-\xi_{\alpha}))\rangle\right|=\left|\langle (T_{\alpha})_{\xi}, \chi_s(t\eta'-\xi)\psi(t\eta'-(\xi-\xi_{\alpha}))\rangle\right|.\]
Next, set $s=t\epsilon$ plug in $\varphi(\xi):=\chi(t\eta'-\xi)\psi(t\eta'-(\xi-\xi_{\alpha}))$ into the assumption (2) on $\{T_{\alpha}\}_{\alpha\in \mathcal{A}}$ and utilize (\ref{eq:Hormander_bound1}) to obtain
\begin{align*}&\left|\langle (T_{\alpha})_{\xi}, \chi_s(t\eta'-\xi)\psi(t\eta'-(\xi-\xi_{\alpha}))\rangle\right|\\
\leq &b \sup_{\xi\in iV^*} (1+|\xi|)^l|\chi_s(t\eta'-\xi)||\psi(t\eta'-(\xi-\xi_{\alpha}))|\\
\leq &b\cdot b_N(R)\sup_{\substack{\xi\in iV^*\\ |t\eta'-\xi|\geq \frac{t\epsilon}{2}}} (1+|\xi|)^l(1+|t\eta'-\xi|)^{-N}.\\
\leq &b\cdot b_N(R)\left(1+t\left(|\eta'|+\frac{\epsilon}{2}\right)\right)^l\left(1+\frac{t\epsilon}{2}\right)^N. \numberthis \label{eq:almost}
\end{align*} 
In the last step, we utilized the triangle inequality to check $|t\eta'-\xi|\geq \frac{t\epsilon}{2}$ implies $|\xi|\leq |t\eta'|+\frac{t\epsilon}{2}$. Next, if we rechoose $\epsilon<2$, then (\ref{eq:almost}) yields
\begin{align*}\left|\langle (T_{\alpha})_{\xi}, \psi(t\eta'-(\xi-\xi_{\alpha}))\rangle\right|&\leq b\cdot b_N(R)\cdot \max{\left(1,2e^{\delta}|\eta|,\epsilon\right)}^l\left(1+t\right)^l\left(1+t\right)^{-N}\\ 
&\leq c\cdot b_N(R)\cdot (1+t)^{-(N-l)}
\end{align*}
where $c>0$ is a constant independent of $N$. Replacing $N-l$ by $N$, we obtain
\[\left|\langle (T_{\alpha})_{\xi}, \psi(t\eta'-(\xi-\xi_{\alpha}))\rangle\right|\leq c\cdot b_{N+l}(R)\cdot (1+t)^{-N}.\]
This is the desired expression (\ref{eq:Hormander_bound2}).
\end{proof}





Next, we consider the singular spectrum of our integral $\theta(\mu)$.

\begin{lemma} \label{sslemma} If $\mu$ is a finite, positive measure on $\widehat{G}$, then the singular spectrum of the integral 
$$\theta(\mu)=\int_{J(\Gamma)\in \widehat{G}}\theta(\Gamma)d\mu_{\Gamma}$$
at zero is subject to the bound
$$\operatorname{SS}_0(\theta(\mu))\subset \operatorname{AC}\left(\bigcup_{J(\Gamma)\in \operatorname{supp}\mu} \operatorname{supp}C(\Gamma)\right).$$
\end{lemma}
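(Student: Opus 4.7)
To prove $\operatorname{SS}_0(\theta(\mu))\subset \operatorname{AC}\bigl(\bigcup_{J(\Gamma)\in\operatorname{supp}\mu}\operatorname{supp}C(\Gamma)\bigr)$, I would fix $\eta\in i\mathfrak{g}^*$ outside the right-hand asymptotic cone and produce an open conic neighborhood $W\ni\eta$ together with a family of cutoff functions $\varphi_{N,U_1,U}$ of the sort from Lemma \ref{FphiN} (with $U_1$ a sufficiently small neighborhood of $0\in\mathfrak{g}$) for which
$$\bigl|\widetilde{\mathcal{F}}[\varphi_{N,U_1,U}\,\theta(\mu)](t\eta')\bigr|\leq C^{N+1}N!\,(1+t)^{-N}$$
uniformly in $\eta'\in W$, $t\geq 1$, $N\in\mathbb{N}$. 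This estimate is precisely the defining condition for $\eta\notin\operatorname{SS}_0(\theta(\mu))$.

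The first step is to push the estimate onto the Fourier side. Applying Lemma \ref{chint}(2) to the smooth, compactly supported density $\omega_{t,\eta',N}(X)=\varphi_{N,U_1,U}(X)\,e^{\langle t\eta',X\rangle}dX$ gives
$$\widetilde{\mathcal{F}}[\varphi_{N,U_1,U}\,\theta(\mu)](t\eta')=\bigl\langle C(\mu)_\xi,\ \widetilde{\mathcal{F}}[\varphi_{N,U_1,U}](\xi+t\eta')\bigr\rangle.$$
Next I unfold $C(\mu)=\int_{\widehat{G}}\sum_{\Gamma_1}M_{\Gamma_1,\Gamma}\int_K X_k(\Gamma_1)\,dk\,d\mu_\Gamma$ and split each $X_k(\Gamma_1)=\widetilde{X}_k(\Gamma_1)+\xi_\alpha$ with $\xi_\alpha=k\cdot\operatorname{Re}\nu_1$, as in the proof of Lemma \ref{chint}.

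The heart of the argument is then Lemma \ref{Hormander}, applied to the collection $\{(\widetilde{X}_k(\Gamma_1),\xi_\alpha)\}$ indexed by pairs $\alpha=(k,\Gamma_1)$ with $k\in K$ and $M_{\Gamma_1,\Gamma}\neq 0$ for some $\Gamma\in\operatorname{supp}\mu$, taking as test function $\psi=\widetilde{\mathcal{F}}[\varphi_{N,U_1,U}]$. Its hypotheses are verified as follows: the points $\xi_\alpha$ are uniformly bounded by Lemma \ref{realpartlemma} combined with compactness of $K$; the uniform tempered-distribution estimate $|\langle\widetilde{X}_k(\Gamma_1),\varphi\rangle|\leq b\sup(1+|\xi|)^l|\varphi(\xi)|$ was established in the proof of Lemma \ref{chint}(1) within each of the finitely many subsets $(\widehat{G}_{\text{ext}})_{H,H_1}$; and Lemma \ref{FphiN} supplies the estimate $|\psi(\xi)|\leq C^{N+1}(N+1)^N e^{B|\operatorname{Re}\xi|}(1+|\operatorname{Im}\xi|)^{-N}$. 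The asymptotic-cone hypothesis transfers correctly because translating each $X_k(\Gamma_1)$ by the uniformly bounded $\xi_\alpha$ does not alter its asymptotic cone, and only finitely many $\Gamma_1$'s contribute to each $C(\Gamma)$. Lemma \ref{Hormander} then produces an open $W\ni\eta$ such that
$$\bigl|\langle(\widetilde{X}_k(\Gamma_1))_\xi,\psi(t\eta'-(\xi-\xi_\alpha))\rangle\bigr|\leq b_N''(1+t)^{-N}$$
with $b_N''\leq b_1\sum_{k\leq 2l+N}b_k'(B)$; Stirling's formula converts the resulting $(N+1)^N$ growth into a bound $b_N''\leq \widetilde{C}^{N+1}N!$.

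Finally, I would integrate this pointwise bound against $d\mu_\Gamma\,dk$ and sum over the finitely many $\Gamma_1$ contributing to each $\Gamma$; since $\mu$ is a finite measure, $K$ is compact, and the multiplicities $M_{\Gamma_1,\Gamma}$ are uniformly bounded by Vogan's Jantzen-filtration result cited in the proof of Lemma \ref{chint}, the desired bound $C^{N+1}N!(1+t)^{-N}$ survives passage to the integral. The main obstacle is the careful bookkeeping of constants in Lemma \ref{Hormander} as $N$ varies: the $(N+1)^N$ growth coming from Lemma \ref{FphiN}, combined with the summation $\sum_{k\leq 2l+N}$ inherent in Hormander's proof, must be shown to still yield analytic-class decay $C^{N+1}N!$ rather than merely a Gevrey-class bound $(CN)^{CN}$. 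Uniformity of every intermediate constant across the index set of $\alpha$, guaranteed by Lemma \ref{realpartlemma}, the finiteness of Cartan conjugacy classes, and the boundedness of the Jantzen-filtration coefficients, is what allows the estimate to be preserved under integration against $\mu$.
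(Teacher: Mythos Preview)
Your proposal is correct and follows essentially the same route as the paper: passage to the Fourier side via Lemma \ref{chint}, unfolding $C(\mu)$ into the pieces $X_k(\Gamma_1)=\widetilde{X}_k(\Gamma_1)+k\cdot\operatorname{Re}\nu_1$, and applying Lemma \ref{Hormander} with $\psi=\widetilde{\mathcal{F}}[\varphi_{N,U_1,U}]$, with hypotheses supplied by Lemma \ref{realpartlemma}, the Harish-Chandra estimates from the proof of Lemma \ref{chint}(1), Lemma \ref{FphiN}, and the uniform bound on the $M_{\Gamma_1,\Gamma}$. The only cosmetic difference is that the paper records the final bound as $(C')^{N+1}(N+1)^N t^{-N}$ (after an elementary manipulation of $(2l+N+1)^{2l+N}$) rather than invoking Stirling to write $C^{N+1}N!$; these are equivalent for the singular spectrum criterion.
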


\begin{proof}
To prove Lemma \ref{sslemma}, we recall the definition of the singular spectrum of a distribution (see Definition 2.3 of \cite{HHO}). Suppose $V$ is a finite dimensional real vector space, and 
let $0\in U_1\subset U$ be precompact open sets in $V$ with $U_1$ compactly contained in $U$. Fix a sequence $\varphi_{N,U_1,U}$ of smooth functions supported in $U$ and satisfying the properties given before Lemma \ref{FphiN}. Suppose $u$ is a distribution on the vector space $V$, and write 
$$\mathcal{F}[\varphi_{N,U_1,U}u]=\widetilde{\mathcal{F}}[\varphi_{N,U_1,U}u]d\xi$$
where $d\xi$ is a Lebesgue measure on $i\mathfrak{g}^*$ and $\widetilde{\mathcal{F}}[\varphi_{N,U_1,U}u] \in C^{\infty}(i\mathfrak{g}^*)$ is a smooth function on $i\mathfrak{g}^*$. Then we may compute the singular spectrum of $u$ at $0$ by recalling $(0,\xi)$ with $\xi\neq 0$ is not in the singular spectrum if and only if there exists an open set $\xi\in W\subset iV^*$ and a constant $C>0$ such that
$$|\widetilde{\mathcal{F}}[\varphi_{N,U_1,U}u](t\eta)|\leq C^{N+1}(N+1)^Nt^{-N}$$ for every $\eta\in W$.

The distribution we wish to study is $\theta(\mu)$ where $\mu$ is a finite, positive measure on the unitary dual, $\widehat{G}$. First, we check 
$$\mathcal{F}[\theta(\mu)\varphi_{N,U_1,U}]=C(\mu)*\mathcal{F}[\varphi_{N,U_1,U}].$$ 
We will do this by manipulating the right hand side to obtain the left hand side.  
By part (2) of Lemma \ref{chint}, the left hand side converges absolutely to a smooth, polynomially bounded density on $i\mathfrak{g}^*$. In particular, it is a distribution on $i\mathfrak{g}^*$.

Now, the Fourier transform of the tempered distribution $C(\mu)*\mathcal{F}[\varphi_{N,U_1,U}]$ is by definition the tempered distribution 
$$\omega\mapsto \langle C(\mu)*\mathcal{F}[\varphi_{N,U_1,U}], \mathcal{F}[\omega]\rangle$$
where $\omega$ is in the space of Schwartz densities on $\mathfrak{g}$. Since we have already checked absolute convergence of this integral, we can formally transfer the convolution to obtain
$$\langle C(\mu), \mathcal{F}[\varphi_{N,U_1,U}]*\mathcal{F}[\omega]\rangle.$$
Now, since $\mathcal{F}[\varphi_{N,U_1,U}]$ is a Schwartz density on $i\mathfrak{g}^*$ and $\mathcal{F}[\omega]$ is a Schwartz function on $i\mathfrak{g}^*$, we note
$$\mathcal{F}[\varphi_{N,U_1,U}]*\mathcal{F}[\omega]=\mathcal{F}[\varphi_{N,U_1,U}\cdot \omega]$$
on $i\mathfrak{g}^*$. However, this is also true on $\mathfrak{g}_{\mathbb{C}}^*$ since both sides have holomorphic extensions to $\mathfrak{g}_{\mathbb{C}}^*$, which are determined by their restrictions to $i\mathfrak{g}^*$. 
Now, we finish by observing
$$\langle C(\mu),\mathcal{F}[\varphi_{N,U_1,U}\omega]\rangle=\langle \theta(\mu),\varphi_{N,U_1,U}\omega\rangle=\langle \theta(\mu)\varphi_{N,U_1,U},\omega\rangle.$$
As remarked earlier, this follows from work of Duflo and Rossmann.

Now that we have checked that $C(\mu)*\mathcal{F}[\varphi_{N,U_1,U}]$ is the Fourier transform of $\theta(\mu)\varphi_{N,U_1,U}$, we can estimate the singular spectrum of $\theta(\mu)$ by estimating the decay of the former distribution.

More precisely, if $\eta\in i\mathfrak{g}^*\setminus \operatorname{AC}\left(\bigcup_{J(\Gamma)\in \operatorname{supp}\mu} \operatorname{supp}C(\Gamma)\right)$, we must find an open neighborhood $\eta\in W\subset i\mathfrak{g}^*$ and a constant $C>0$ such that
$$\left|[C(\mu)*\widetilde{\mathcal{F}}[\varphi_{N,U_1,U}]](t\eta)\right|=\left|\int_{J(\Gamma)\in \widehat{G}} \int_{C(\Gamma)} \widetilde{\mathcal{F}}[\varphi_{N,U_1,U}](t\eta'-\xi)d\xi d\mu_{\Gamma}\right|$$
$$\leq C^{N+1}(N+1)^Nt^{-N}$$
for all $\eta'\in W$ and $N\in \mathbb{N}$. Here as before, we are writing 
$$\mathcal{F}[\varphi_{N,U_1,U}]=\widetilde{\mathcal{F}}[\varphi_{N,U_1,U}]d\xi$$
where $d\xi$ is a Lebesgue measure on $i\mathfrak{g}^*$ and $\widetilde{\mathcal{F}}[\varphi_{N,U_1,U}]$ is a smooth function on $i\mathfrak{g}^*$, which extends to a holomorphic function on $\mathfrak{g}_{\mathbb{C}}^*$.
Since the measure $\mu$ is finite, it is enough to show that there exists a neighborhood $\eta\in W\subset i\mathfrak{g}^*$ and a constant $C>0$ such that $$\left|\langle C(\Gamma)_{\xi}, \widetilde{\mathcal{F}}[\varphi_{N,U_1,U}](t\eta'-\xi)\rangle\right|\leq  C^{N+1}(N+1)^Nt^{-N}$$ 
for every $\eta'\in W$, $J(\Gamma)\in \operatorname{supp}\mu$, and $N\in \mathbb{N}$. Next, suppose $M_{\Gamma_1,\Gamma}\neq 0$ for some $\Gamma_1$ for which $J(\Gamma)\in \operatorname{supp}\mu$, and recall
$$X(\Gamma_1)=K\cdot (\mathcal{O}_{D(\Lambda_1)}^{M_1}+\nu_1+i(\mathfrak{g}/\mathfrak{p}_1)^*).$$
As in Section 2, for every $k\in K$, define
$$X_k(\Gamma_1)=k\cdot (\mathcal{O}_{D(\Lambda_1)}^{M_1}+\nu_1+i(\mathfrak{g}/\mathfrak{p}_1)^*).$$
Then it is enough to show that there exists a neighborhood $\eta\in W\subset i\mathfrak{g}^*$ and a constant $C>0$ such that 
\begin{equation}\label{eq:final_bound}
\left|\langle X_k(\Gamma_1)_{\xi}, \widetilde{\mathcal{F}}[\varphi_{N,U_1,U}](t\eta'-\xi)\rangle\right|\leq  C^{N+1}(N+1)^Nt^{-N}
\end{equation}
for every $\eta'\in W$, $N\in \mathbb{N}$, $k\in K$, and $\Gamma_1$ for which $M_{\Gamma_1,\Gamma}\neq 0$ for some $\Gamma$ with $J(\Gamma)\in \operatorname{supp}\mu$. 

Justifying the last remark requires the fact that the coefficients $M_{\Gamma_1,\Gamma}$ are bounded uniformly for all Langlands parameters $\Gamma$ and $\Gamma_1$ and for a fixed real, reductive algebraic group $G$. This follows from Vogan's work on the Jantzen filtration (see \cite{Vo84} for the original reference or Section 14 of \cite{ALTV} for an exposition) together with the Jantzen-Juckerman translation principle (see \cite{Zu77}, Chapter 7 of \cite{Vo81} for the original references or Section 16 of \cite{ALTV} for an exposition).

Now, to establish (\ref{eq:final_bound}), we must apply Lemma \ref{FphiN} and Lemma \ref{Hormander}. In Lemma \ref{Hormander}, we let $\mathcal{A}$ be the collection of pairs $(\Gamma_1,k)$ with $M_{\Gamma_1,\Gamma}\neq 0$ for some $J(\Gamma)\in \operatorname{supp}\mu$ and $k\in K$. If $\alpha=(\Gamma_1,k)$, we let $T_{\alpha}=T_{(\Gamma_1,k)}$ be the tempered distribution
$$\widetilde{X}_k(\Gamma_1):=k\cdot (\mathcal{O}_{D(\Lambda_1)}^{M_1}+\operatorname{Im}\nu_1+i(\mathfrak{g}/\mathfrak{p}_1)^*),$$
and we let $\xi_{\alpha}=\xi_{(\Gamma_1,k)}:=k\cdot \operatorname{Re}\nu_1$. We must check that $\{(T_{\alpha},\xi_{\alpha})\}_{\alpha\in \mathcal{A}}$ satisfy properties (1) and (2) of Lemma \ref{Hormander}. The first property follows from Lemma \ref{realpartlemma}. The second property follows from (\ref{eq:Harish-Chandra_bound}).

Now, we put $\psi=\widetilde{\mathcal{F}}[\varphi_{N,U_1,U}]$ in Lemma \ref{Hormander}, and we compare the constants in equations (\ref{eq:Paley_Weiner_bound}) and (\ref{eq:Hormander_bound1}) to determine
\[b_N(r)=C^{N+1}(N+1)^Ne^{Br}.\]
Plugging this expression for $b_N(r)$ into (\ref{eq:Hormander_bound2}), we obtain
\begin{align*}
\left|\langle X_k(\Gamma_1)_{\xi}, \widetilde{\mathcal{F}}[\varphi_{N,U_1,U}](t\eta'-\xi)\rangle\right|&=\left|\langle \widetilde{X}_k(\Gamma_1)_{\xi}, \widetilde{\mathcal{F}}[\varphi_{N,U_1,U}](t\eta'-(\xi-\xi_{\alpha}))\rangle\right|\\
&\leq  c\cdot b_{N+l}(R)(1+t)^{-N} \\&= c\cdot e^{BR}\cdot C^{N+l+1}(N+l+1)^{l+N}(1+t)^{-N}\\
&\leq (C')^{N+l+1}(N+l+1)^{l+N}t^{-N} \numberthis \label{eq:nearly}
\end{align*}
for a constant $C'>0$ that is independent of $N$. If $N>l$, then
\begin{align*}
(N+l+1)^{N+l}&\leq (N+1)^N \left(\frac{l+N+1}{N+1}\right)^N (l+N+1)^{l}\\
&\leq (N+1)^N 2^N (2N)^{l}\\
&\leq (N+1)^N 2^N (2N+3)^{l+3}\\
&\leq (N+1)^N 2^N (l+3)^{2N+3}. \numberthis \label{eq:nearly2}
\end{align*}
In the last step, we used that $s^r>r^s$ if $r>s\geq 3$ are natural numbers; one checks this by observing $\log r/r$ is a decreasing function for $r>e$. Plugging (\ref{eq:nearly2}) into (\ref{eq:nearly}), we see there exists a constant $C''>0$ independent of $N$ such that
$$\left|\langle X_k(\Gamma_1)_{\xi}, \widetilde{\mathcal{F}}[\varphi_{N,U_1,U}](t\eta'-\xi)\rangle\right|\leq  (C'')^{N+1}(N+1)^{N}t^{-N}$$
if $N>l$. The Lemma follows.
\end{proof}

The proof of Theorem \ref{maintheorem} is nearly complete. Thus far, we have shown that if 
$$\pi\simeq \int_{J(\Gamma)\in \widehat{G}_s} J(\Gamma)^{\oplus m(J(\Gamma),\pi)} d\mu_{\Gamma}$$
is a direct integral of singular representations and $\mu'$ is a finite, positive measure that is equivalent to $\mu$, then
$$\operatorname{SS}_0(\theta(\mu'))\subset i(\mathfrak{g}_s^*)'.$$
The final Lemma we need is essentially already in \cite{HHO}. 

\begin{lemma} \label{wfch} Suppose $G$ is a Lie group, suppose $(\pi,V)$ is a unitary representation of $G$, and write
$$\pi\simeq \int_{\sigma\in \widehat{G}} \sigma^{\oplus m(\pi,\sigma)}d\mu_{\sigma}$$
as a direct integral of irreducible representations. Suppose that for every $\sigma\in \operatorname{supp}\pi$, the character $\Theta_{\sigma}$ exists as a distribution on $G$, and suppose that the integral
$$\Theta_{\mu'}=\int_{\sigma\in \widehat{G}} \Theta_{\sigma}d\mu'_{\sigma}$$
exists as a distribution on a neighborhood of the identity in $G$ for all finite, positive measures $\mu'$ on $\widehat{G}$ that are equivalent to $\mu$. If $u,v\in V$ are vectors, then 
$$\operatorname{SS}_e(\pi(g)u,v)\subset \bigcup_{\substack{\mu'\sim \mu\\ \mu'\ \text{finite,\ positive}}} \operatorname{SS}_e(\Theta_{\mu'})$$
where $\mu'\sim \mu$ means that $\mu'$ and $\mu$ are absolutely continuous with respect to each other.
\end{lemma}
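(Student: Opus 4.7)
The plan is to construct, for each pair of vectors $u, v \in V$, a single finite positive measure $\mu' \sim \mu$ so that $\operatorname{SS}_e((\pi(g)u, v)) \subset \operatorname{SS}_e(\Theta_{\mu'})$. The heart of the argument is a pointwise positivity bound that dominates a matrix coefficient by an integrated character, which then transfers Fourier-decay information from $\Theta_{\mu'}$ to $(\pi(g)u,v)$.

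First I would establish the key single-representation inequality. For $f \in C_c^\infty(G)$, the assumption that $\Theta_\sigma$ exists as a distribution means $\sigma(f)$ is trace class, hence Hilbert--Schmidt, so $\sigma(f^* * f) = \sigma(f)^* \sigma(f)$ is positive and trace class. The elementary fact that $(Aw, w) \le \|w\|^2\, \operatorname{Tr}(A)$ for any positive trace-class $A$ and vector $w$ (apply positivity of $A$ to an orthonormal basis containing $w/\|w\|$) then gives
$$\|\sigma(f) u_\sigma\|^2 = (\sigma(f^* * f) u_\sigma, u_\sigma) \le \|u_\sigma\|^2\, \Theta_\sigma(f^* * f).$$
Applying Cauchy--Schwarz in $\mu$ to $(\pi(f)u, v) = \int (\sigma(f) u_\sigma, v_\sigma)\, d\mu_\sigma$ yields $|(\pi(f)u, v)|^2 \le \|v\|^2 \int \|u_\sigma\|^2\, \Theta_\sigma(f^* * f)\, d\mu_\sigma$. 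Setting $d\mu'_\sigma = (1 + \|u_\sigma\|^2)\, d\mu_\sigma$ gives a finite positive measure (since $\mu$ is finite and $\int \|u_\sigma\|^2\, d\mu_\sigma = \|u\|^2 < \infty$) whose Radon--Nikodym derivative with respect to $\mu$ is at least $1$, so $\mu' \sim \mu$. Using $\Theta_\sigma(f^* * f) \ge 0$, this gives the clean bound
$$|(\pi(f) u, v)|^2 \le \|v\|^2\, \Theta_{\mu'}(f^* * f).$$

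Next I would insert the specific test functions used to compute the singular spectrum at $e$. Work in the $\exp$-chart near $e$, let $m_{u,v}(X) = (\pi(\exp X)u, v)$, fix the cutoff family $\varphi_{N,U_1,U}$ from before Lemma \ref{FphiN}, and set
$$f_{N,\xi}(g) = \varphi_{N,U_1,U}(\log g)\, e^{\langle \xi, \log g\rangle}\, j_G(\log g)^{-1},$$
so that $\widetilde{\mathcal{F}}(\varphi_{N,U_1,U}\, m_{u,v})(\xi) = (\pi(f_{N,\xi})u, v)$. The main computation is to identify $\Theta_{\mu'}(f_{N,\xi}^* * f_{N,\xi})$, up to harmless analytic corrections, with $\widetilde{\mathcal{F}}(\Phi_N\, \theta_{\mu'})(\xi)$, where $\Phi_N := \varphi_{N,U_1,U}^* * \varphi_{N,U_1,U}$ is the Euclidean self-convolution. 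The underlying algebraic point is that for purely imaginary $\xi$ the Euclidean self-convolution of $\varphi_{N,U_1,U}(X)\, e^{\langle \xi, X\rangle}$ factors as $\Phi_N(X)\, e^{\langle \xi, X\rangle}$, so pairing with $\theta_{\mu'}$ reproduces $\widetilde{\mathcal{F}}(\Phi_N\, \theta_{\mu'})(\xi)$. Granted this identification, if $\eta \notin \operatorname{SS}_e(\Theta_{\mu'}) = \operatorname{SS}_0(\theta_{\mu'})$, applying Lemma \ref{Hormander} to $\theta_{\mu'}$ with test function $\widetilde{\mathcal{F}}(\Phi_N)$ (whose Paley--Wiener bounds follow from the proof of Lemma \ref{FphiN}) yields, on an open cone $W \ni \eta$, a bound $|\widetilde{\mathcal{F}}(\Phi_N\, \theta_{\mu'})(t\eta')| \le C^{N+1}(N+1)^N t^{-N}$. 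Combining with the previous inequality and taking square roots gives a bound of the same form for $|\widetilde{\mathcal{F}}(\varphi_{N,U_1,U}\, m_{u,v})(t\eta')|$ on a slightly smaller cone around $\eta$, so $\eta \notin \operatorname{SS}_e((\pi(g)u, v))$.

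The main obstacle is justifying the identification between $\Theta_{\mu'}(f_{N,\xi}^* * f_{N,\xi})$ and $\widetilde{\mathcal{F}}(\Phi_N\, \theta_{\mu'})(\xi)$. Two classes of corrections need to be controlled: the Jacobians $j_G$ and $j_G^{1/2}$ relating the group distribution $\Theta_{\mu'}$ to its Lie-algebra analogue $\theta_{\mu'}$, and the Baker--Campbell--Hausdorff corrections distinguishing group convolution (pulled back via $\exp$) from Euclidean convolution on $\mathfrak{g}$. Both corrections are analytic and nonvanishing near $0$, so absorbing them into $\Phi_N$ produces a modified cutoff family still satisfying the seminorm bounds of the family before Lemma \ref{FphiN} with enlarged constants. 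The squaring inherent in $\widetilde{\mathcal{F}}(\Phi_N) = |\widetilde{\mathcal{F}}(\varphi_{N,U_1,U})|^2$ doubles the constants from Lemma \ref{FphiN}, but Lemma \ref{Hormander} absorbs this and still yields decay of the form $C^{N+1}(N+1)^N t^{-N}$ after taking square roots and adjusting constants. This bookkeeping parallels the final step of the proof of Lemma \ref{sslemma} and constitutes the bulk of the technical work.
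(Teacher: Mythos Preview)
Your strategy is the same one the paper sketches: bound the matrix coefficient by Cauchy--Schwarz in the direct integral, use Howe's observation $\|\sigma(f)\|_{\mathrm{HS}}^2=\Theta_\sigma(f^**f)$ (your trace inequality is an equivalent formulation), and absorb the weight $\|u_\sigma\|^2$ into an equivalent finite measure $\mu'$. Your explicit choice $d\mu'_\sigma=(1+\|u_\sigma\|^2)\,d\mu_\sigma$ is exactly the right way to produce a single $\mu'$ that works for a given pair $u,v$.

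There is one genuine misstep in the last stage. You invoke Lemma~\ref{Hormander} to pass from $\eta\notin\operatorname{SS}_0(\theta_{\mu'})$ to decay of $\widetilde{\mathcal F}(\Phi_N\,\theta_{\mu'})(t\eta')$. But Lemma~\ref{Hormander} takes as input a family of tempered distributions on $iV^*$ together with the asymptotic cone of their supports; it is designed for the situation of Lemma~\ref{sslemma}, where one has the Fourier-side object $C(\mu)$ with controlled support. Here you only know $\eta\notin\operatorname{SS}_0(\theta_{\mu'})$, i.e.\ the required decay holds for \emph{some} admissible cutoff family, and you must transfer this to your specific family $\Phi_N$ (modified by Jacobian and BCH corrections). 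Lemma~\ref{Hormander} does not do that. What is needed instead is the standard microlocal fact that the singular spectrum at a point is independent of the admissible cutoff family: one writes $\Phi_N\theta_{\mu'}=\Phi_N\psi_{N}\theta_{\mu'}$ for cutoffs $\psi_N$ witnessing $\eta\notin\operatorname{SS}_0(\theta_{\mu'})$ (with supports arranged so $\psi_N\equiv 1$ on $\operatorname{supp}\Phi_N$), and then controls the convolution $\widetilde{\mathcal F}[\Phi_N]*\widetilde{\mathcal F}[\psi_N\theta_{\mu'}]$ using the Paley--Wiener bounds of Lemma~\ref{FphiN} on $\widetilde{\mathcal F}[\Phi_N]$ together with the assumed decay of $\widetilde{\mathcal F}[\psi_N\theta_{\mu'}]$. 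This is the ``technical Lemma from microlocal analysis'' the paper alludes to (proved in \cite{HHO}), and it replaces your appeal to Lemma~\ref{Hormander}. With that substitution the rest of your outline, including the handling of the analytic Jacobian and BCH corrections and the square-root bookkeeping, goes through as you describe.
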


Though not stated explicitly, this Lemma is proved in Section 6 of \cite{HHO}. There, it is assumed that $G$ is reductive and $\operatorname{supp}\pi$ is contained in the set of irreducible, tempered representations. However, these assumptions are not utilized in the proof. Because Lemma \ref{wfch} is not stated explicitly in \cite{HHO}, we include a brief sketch for the reader below.

First, we remark that in our setting, $\operatorname{SS}_0(\theta(\mu'))=\operatorname{SS}_e(\Theta_{\mu'})$. Hence, Theorem \ref{maintheorem} now follows from Lemma \ref{singularbound}, Lemma \ref{sslemma}, and Lemma \ref{wfch}. 
\bigskip

Now, we briefly recall the outline of the proof of Lemma \ref{wfch}, which is essentially already in Section 7 of \cite{HHO}. We choose $\varphi_{N,U_1,U}$ as in Section 7 of \cite{HHO}. As in the proof of Proposition 7.1 of \cite{HHO}, we write $u=(u_{\sigma})$ and $v=(v_{\sigma})$ using our direct integral decomposition. And the computation that follows in \cite{HHO} yields
$$\left|\int_G \varphi_{N,U_1,U}(g)(\pi(g)u,v)e^{it\eta(\log(g))}dg\right|$$
$$\leq \left(\int_{\sigma\in \operatorname{supp}\pi} \left|\sigma(\varphi_{N,U_1,U}e^{it\eta\log(\cdot)})\right|_{\operatorname{HS}}^2 |u_{\sigma}|^2 d\mu(\sigma)\right)^{1/2}\left(\int_{\sigma\in \operatorname{supp}\pi} |v_{\sigma}|^2 d\mu(\sigma)\right)^{1/2}.$$
Here $|\cdot|_{\operatorname{HS}}$ denotes the Hilbert Schmidt norm of the operator. Next, using a calculation of Howe \cite{How}, one writes the Hilbert Schmidt norm of the operator as an integral involving the character (see the proof of Proposition 7.1 in \cite{HHO}). Using a technical Lemma from microlocal analysis, one is able to show that this integral decays sufficiently rapidly to imply that 
$$\eta\notin \operatorname{SS}_e(\pi(g)u,v)$$ whenever 
$$\eta\notin \bigcup_{\substack{\mu'\sim \mu\\ \mu'\ \text{finite,\ positive}}} \operatorname{SS}_e(\Theta_{\mu'})$$
(see the last page of Section 7 of \cite{HHO}). Lemma \ref{wfch} follows.

\section{Applications to Induction Problems}

We begin by explaining how Corollary \ref{inducedcor} and Corollary \ref{L^2cor} follow from Theorem \ref{maintheorem} together with Theorem 1.1 of \cite{HHO} and Theorem 1.2 of \cite{HHO}. Suppose $G$ is a real, reductive algebraic group, suppose $H\subset G$ is a closed subgroup, and suppose $(\tau,W)$ is an irreducible, unitary representation of $H$. Then we may write 
$$\operatorname{Ind}_H^G\tau\simeq\int_{J(\Gamma)\in \widehat{G}} J(\Gamma)^{\oplus m(J(\Gamma),\operatorname{Ind}_H^G\tau)} d\mu_{\Gamma}$$
with $\mu$ a positive measure on $\widehat{G}$. If $\widehat{G}_{\text{temp}}^{\text{\ }\prime}\subset \widehat{G}$ is the open subset of tempered representations with regular infinitesimal character, then we may decompose
$$\mu=\mu|_{\widehat{G}_{\text{temp}}^{\text{\ }\prime}}+\mu|_{\widehat{G}-\widehat{G}_{\text{temp}}^{\text{\ }\prime}}.$$
This gives rise to a decomposition
$$\operatorname{Ind}_H^G\tau\simeq \int_{J(\Gamma)\in \widehat{G}_{\text{temp}}^{\text{\ }\prime}} J(\Gamma)^{\oplus m(J(\Gamma),\operatorname{Ind}_H^G\tau)} d\mu|_{\widehat{G}_{\text{temp}}^{\text{\ }\prime}}$$
$$\bigoplus \int_{J(\Gamma)\in \widehat{G}-\widehat{G}_{\text{temp}}^{\text{\ }\prime}} J(\Gamma)^{\oplus m(J(\Gamma),\operatorname{Ind}_H^G\tau)} d\mu|_{\widehat{G}-\widehat{G}_{\text{temp}}^{\text{\ }\prime}}.$$
Every matrix coefficient of $\operatorname{Ind}_H^G\tau$ can be decomposed into the sum of a matrix coefficient of the first representation plus a matrix coefficient of the second representation. As in Proposition 1.3 of \cite{How}, we deduce
$$\operatorname{WF}(\operatorname{Ind}_H^G\tau)$$
is the union of the 
$$\operatorname{WF}\left(\int_{J(\Gamma)\in \widehat{G}_{\text{temp}}^{\text{\ }\prime}} J(\Gamma)^{\oplus m(J(\Gamma),\operatorname{Ind}_H^G\tau)} d\mu|_{\widehat{G}_{\text{temp}}^{\text{\ }\prime}}\right)$$
and 
$$\operatorname{WF}\left(\int_{J(\Gamma)\in \widehat{G}-\widehat{G}_{\text{temp}}^{\text{\ }\prime}} J(\Gamma)^{\oplus m(J(\Gamma),\operatorname{Ind}_H^G\tau)} d\mu|_{\widehat{G}-\widehat{G}_{\text{temp}}^{\text{\ }\prime}}\right).$$
The same goes for the singular spectrum of $\operatorname{Ind}_H^G\tau$. Now, we know from Theorem 1.1 of \cite{HHO} that
$$\operatorname{WF}(\operatorname{Ind}_H^G\tau)\supset \overline{\operatorname{Ad}^*(G)\cdot q^{-1}(\operatorname{WF}(\tau))}$$
where $\mathfrak{g}$ (resp. $\mathfrak{h}$) denotes the Lie algebra of $G$ (resp. $H$) and $q\colon i\mathfrak{g}^*\rightarrow i\mathfrak{h}^*$ is the natural projection. Further, the analogous statement for the singular spectrum is also part of Theorem 1.1 of \cite{HHO}. Now, by Theorem 1.1 of this paper, we deduce
$$\operatorname{WF}\left(\int_{J(\Gamma)\in \widehat{G}-\widehat{G}_{\text{temp}}^{\text{\ }\prime}} J(\Gamma)^{\oplus m(J(\Gamma),\operatorname{Ind}_H^G\tau)} d\mu|_{\widehat{G}-\widehat{G}_{\text{temp}}^{\text{\ }\prime}}\right)\subset i\mathfrak{g}_s^*.$$
Again, the analogous statement holds for the singular spectrum as well. Thus, we deduce
$$\operatorname{WF}\left(\int_{J(\Gamma)\in \widehat{G}_{\text{temp}}^{\text{\ }\prime}} J(\Gamma)^{\oplus m(J(\Gamma),\operatorname{Ind}_H^G\tau)} d\mu|_{\widehat{G}_{\text{temp}}^{\text{\ }\prime}}\right)\supset \overline{\operatorname{Ad}^*(G)\cdot q^{-1}(\operatorname{WF}(\tau))}\cap i(\mathfrak{g}^*)'.$$
Again, this result still holds if we replace $\operatorname{WF}$ with $\operatorname{SS}$ everywhere. Finally, we apply Theorem 1.2 of \cite{HHO} to deduce
$$\operatorname{AC}\left(\bigcup_{\substack{\sigma\in \operatorname{supp}\operatorname{Ind}_H^G\tau\\ \sigma\in \widehat{G}_{\text{temp}}^{\text{\ }\prime}}}\mathcal{O}_{\sigma}\right)\supset \overline{\operatorname{Ad}^*(G)\cdot q^{-1}(\operatorname{WF}(\tau))}\cap i(\mathfrak{g}^*)'.$$
Since Theorem 1.2 of \cite{HHO} is also stated for the singular spectrum, we deduce the identical statement with $\operatorname{WF}$ replaced by $\operatorname{SS}$, which is the statement of Corollary \ref{inducedcor}.

Next, we check Corollary 1.3. To do this, we note that when $\tau$ is the trivial representation of $H$, $\operatorname{WF}(\tau)=\operatorname{SS}(\tau)=\{0\}$ since all of the matrix coefficients of $\tau$ are necessarily analytic. Assuming $X=G/H$ has a non-zero $G$ invariant density, we deduce
$$\operatorname{AC}\left(\bigcup_{\substack{\sigma\in \operatorname{supp}L^2(X)\\ \sigma\in \widehat{G}_{\text{temp}}^{\text{\ }\prime}}}\mathcal{O}_{\sigma}\right)\supset \overline{\operatorname{Ad}^*(G)\cdot i(\mathfrak{g}/\mathfrak{h})^*}\cap i(\mathfrak{g}^*)'$$
since $i(\mathfrak{g}/\mathfrak{h})^*$ is the inverse image of zero under the map $q:i\mathfrak{g}^*\rightarrow i\mathfrak{h}^*$. Next, if $x\in X$, let $G_x$ denote the stabilizer of $x$ in $G$, and let $\mathfrak{g}_x$ denote the Lie algebra of $G_x$. For each $x\in X$, we consider the map $g\mapsto g\cdot x$. We note that the kernel of the associated map on tangent spaces
$$\mathfrak{g}=T_eG\rightarrow T_xX$$
is $\mathfrak{g}_x$. Thus, the image of the pullback map $iT_x^*X\rightarrow iT_e^*G=i\mathfrak{g}^*$ is the set of imaginary valued linear functionals on $\mathfrak{g}$ that vanish on $i\mathfrak{g}_x$. Thus, this injection identifies $iT_x^*X$ with $i(\mathfrak{g}/\mathfrak{g}_x)^*\subset i\mathfrak{g}^*$. Further, $\mathfrak{g}_{\overline{e}}=\mathfrak{h}$, and 
$$\operatorname{Ad}^*(g)\cdot i(\mathfrak{g}/\mathfrak{g}_{\overline{e}})^*=i(\mathfrak{g}/\mathfrak{g}_{\overline{g}})^*$$ 
where $\overline{g}$ denotes the image of $g$ under the map $G\rightarrow X$ by $g\cdot e$. Then we have
$$\bigcup_{x\in X} iT^*_xX=\bigcup_{g\in G} [\operatorname{Ad}^*(g) \cdot i(\mathfrak{g}/\mathfrak{h})^*]$$
and we immediately deduce Corollary \ref{L^2cor}.
\bigskip

Now, in order to use Corollary \ref{inducedcor} and Corollary \ref{L^2cor} to compute concrete examples, it is useful to write things down concretely on the spaces of purely imaginary valued linear functionals on Cartan subalgebras of $\mathfrak{g}$. Suppose $H\subset G$ is a Cartan subgroup with Lie algebra $\mathfrak{h}$, let $W=N_G(H)/H$ denote the real Weyl group of $H$, and let $i(\mathfrak{h}^*)'=i\mathfrak{h}^*\cap i(\mathfrak{g}^*)'$ be the set of regular, semisimple elements in $i\mathfrak{h}^*$. For each Langlands parameter $\Gamma=(H,\gamma,R_{i\mathbb{R}}^+)$ with $J(\Gamma)\in \widehat{G}_{\text{temp}}^{\text{\ }\prime}$, an irreducible, tempered representation with regular infinitesimal character, and $H(\Gamma)=H$, we associate to $J(\Gamma)$ the $W$ orbit $W\cdot d\gamma\subset (i\mathfrak{h}^*)'$ through $d\gamma$. If $\pi$ is any unitary representation of $G$, we define
$$i(\mathfrak{h}^*)'-\operatorname{supp}\pi=\bigcup_{\substack{J(\Gamma) \in \operatorname{supp}\pi \\J(\Gamma)\in \widehat{G}_{\text{temp}}^{\text{\ }\prime}\\ H(\Gamma)=H}}(W\cdot d\gamma)$$
by taking the union of all such $W$ orbits over all irreducible, tempered representations with regular infinitesimal character $J(\Gamma)$ occurring in the decomposition of $\pi$ into irreducibles and which correspond to the Cartan $H$.

Now, we have an analogue of Corollary \ref{L^2cor} on $i$ times the dual of every Cartan.

\begin{corollary} \label{cartanL^2cor} Suppose $G$ is a real, reductive algebraic group with Lie algebra $\mathfrak{g}$, and suppose $X$ is a homogeneous space for $G$ with a non-zero invariant density. Suppose $\mathfrak{h}\subset \mathfrak{g}$ is a Cartan subalgebra, suppose $\xi\in i(\mathfrak{h}^*)'$, and suppose there exists $x\in X$ such that
$$\xi|_{\mathfrak{g}_x}=0.$$
Then $$\xi\in \operatorname{AC}\left(i(\mathfrak{h}^*)'-\operatorname{supp}L^2(X)\right).$$
\end{corollary}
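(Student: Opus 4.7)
The plan is to bootstrap from Corollary~\ref{L^2cor}, which gives asymptotic-cone information in $i\mathfrak{g}^*$, down to the single Cartan $i\mathfrak{h}^*$. The hypothesis $\xi|_{\mathfrak{g}_x}=0$ says precisely that $\xi\in i(\mathfrak{g}/\mathfrak{g}_x)^*=iT_x^*X\subset i\mathfrak{g}^*$. Since $\xi$ is also assumed regular, Corollary~\ref{L^2cor} immediately yields
$$\xi\in\operatorname{AC}\left(\bigcup_{\substack{\sigma\in\operatorname{supp}L^2(X)\\\sigma\in\widehat{G}_{\text{temp}}^{\text{\ }\prime}}}\mathcal{O}_\sigma\right),$$
so there are tempered representations $\sigma_n\in\operatorname{supp}L^2(X)$ with regular infinitesimal character, together with points $\eta_n\in\mathcal{O}_{\sigma_n}$ satisfying $|\eta_n|\to\infty$ and $\eta_n/|\eta_n|\to\xi/|\xi|$.

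The heart of the proof, and the main obstacle, is to push this data into $i\mathfrak{h}^*$. The tool I have in mind is the Chevalley-type projection
$$\chi:i(\mathfrak{g}^*)'\longrightarrow i(\mathfrak{h}^*)'/W,$$
sending a regular semisimple $\eta$ to the unique $W$-orbit in $i\mathfrak{h}^*$ contained in $\operatorname{Ad}^*(G)\cdot\eta$. This map is continuous on the regular semisimple locus and positively homogeneous of degree one, since the coadjoint action commutes with scaling. For a tempered $\sigma$ with regular infinitesimal character and Cartan $G$-conjugate to $H$, one may normalize the Langlands parameter so that $H(\Gamma)=H$; then $\mathcal{O}_\sigma\cap i\mathfrak{h}^*=W\cdot d\gamma_\sigma$, i.e.\ $\chi(\mathcal{O}_\sigma)=W\cdot d\gamma_\sigma$. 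Applying $\chi$ to our sequence and invoking homogeneity at the regular point $\xi/|\xi|$,
$$\chi(\eta_n)/|\eta_n|=\chi(\eta_n/|\eta_n|)\longrightarrow \chi(\xi/|\xi|)=W\cdot(\xi/|\xi|),$$
so $W\cdot d\gamma_{\sigma_n}/|d\gamma_{\sigma_n}|\to W\cdot(\xi/|\xi|)$ in Hausdorff distance with respect to any $W$-invariant norm, and $|d\gamma_{\sigma_n}|\to\infty$.

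Finally, given any open cone $\mathcal{C}_0\subset i(\mathfrak{h}^*)'$ about $\xi$, I would pick for each $n$ the representative $\xi_n\in W\cdot d\gamma_{\sigma_n}$ closest in direction to $\xi/|\xi|$; the convergence above forces $\xi_n/|\xi_n|\to\xi/|\xi|$ and $|\xi_n|\to\infty$, so $\xi_n\in\mathcal{C}_0$ for $n$ sufficiently large. Hence $\mathcal{C}_0\cap\bigcup_n(W\cdot d\gamma_{\sigma_n})$ is unbounded, which is exactly the statement
$$\xi\in\operatorname{AC}\bigl(i(\mathfrak{h}^*)'-\operatorname{supp}L^2(X)\bigr).$$
The only nontrivial technical point is the continuity and scale-equivariance of $\chi$ on the regular semisimple locus, which follows from the Chevalley restriction theorem applied to the homogeneous $G$-invariant polynomials on $i\mathfrak{g}^*$.
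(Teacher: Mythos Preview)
Your overall strategy---pass from the asymptotic-cone statement in $i\mathfrak{g}^*$ (Corollary~\ref{L^2cor}) back to $i\mathfrak{h}^*$ via a ``project to the Cartan'' map---is sound and close in spirit to the paper's proof. But there is a genuine gap in your justification of the key technical step.

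First, a minor point: your map $\chi$ is not defined on all of $i(\mathfrak{g}^*)'$. A regular semisimple $\eta$ lies in a unique Cartan, but in a real reductive group there are finitely many non-conjugate Cartans, and $\operatorname{Ad}^*(G)\cdot\eta$ meets $i\mathfrak{h}^*$ only when that Cartan is $G$-conjugate to $\mathfrak{h}$. You must observe that $\operatorname{Ad}^*(G)\cdot i(\mathfrak{h}^*)'$ is open in $i\mathfrak{g}^*$ (the action map is a submersion at regular points), so your $\eta_n$ eventually land there; only then is $\chi(\eta_n)$ defined.

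More seriously, the Chevalley restriction theorem does \emph{not} give continuity of $\chi$. The invariant polynomials yield a continuous map $i\mathfrak{g}^*\to \mathfrak{h}_{\mathbb{C}}^*/W_{\mathbb{C}}$, where $W_{\mathbb{C}}$ is the \emph{complex} Weyl group; this is strictly coarser than $i(\mathfrak{h}^*)'/W$. For the compact Cartan $\mathfrak{t}$ of $\operatorname{SL}(2,\mathbb{R})$, the real Weyl group $N_G(T)/T$ is trivial while $W_{\mathbb{C}}\cong\mathbb{Z}/2$, so the polynomial map identifies $\xi$ with $-\xi$ although they lie on distinct $G$-orbits (holomorphic versus antiholomorphic discrete series). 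Your $\chi$ is still continuous, but the reason is properness of the $G$-action on $i(\mathfrak{g}^*)'$: the map $G\times_{N_G(H)} i(\mathfrak{h}^*)'\to \operatorname{Ad}^*(G)\cdot i(\mathfrak{h}^*)'$ is a homeomorphism, and $\chi$ is the second projection composed with its inverse.

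The paper's proof avoids constructing $\chi$ altogether. It takes an open cone $\mathcal{C}\subset i(\mathfrak{h}^*)'$ around $\xi$, thickens it to an open cone $\mathcal{C}'=\operatorname{Ad}^*(U)\cdot\mathcal{C}\subset i\mathfrak{g}^*$ using a precompact neighborhood $e\in U\subset G$, applies Corollary~\ref{L^2cor} to see $\mathcal{C}'\cap\bigcup\mathcal{O}_\sigma$ is unbounded, and then uses properness of $U\times\mathcal{C}\to i(\mathfrak{g}^*)'$ together with $G$-invariance of $\bigcup\mathcal{O}_\sigma$ to conclude that already $\mathcal{C}\cap\bigcup\mathcal{O}_\sigma$ is unbounded. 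Since $\mathcal{C}\subset i\mathfrak{h}^*$, any point of this intersection lies in some $W\cdot d\gamma_\sigma$. Both arguments rest on the same properness fact; the paper's packaging is slightly more elementary because it never needs to name the quotient map or worry about which Weyl group appears.
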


\begin{proof} We describe how to deduce this Corollary from Corollary \ref{L^2cor}. Suppose the hypotheses of Corollary \ref{cartanL^2cor} hold, and suppose $\xi\in \mathcal{C}\subset i\mathfrak{h}^*$ is an open cone in $i\mathfrak{h}^*$ containing $\xi$. We must show that $i(\mathfrak{h}^*)'-\operatorname{supp}L^2(X)$ intersects $\mathcal{C}$ in an unbounded set. Without loss of generality, we may assume $\mathcal{C}\subset i(\mathfrak{h}^*)'$ is small enough to be contained in the set of regular, semisimple elements. Let $e\in U\subset G$ be a precompact open neighorhood of the identity in $G$, and let $\mathcal{C}'=\operatorname{Ad}^*(U)\cdot \mathcal{C}$. We observe that $\xi\in \mathcal{C}'\subset i\mathfrak{g}^*$ is an open cone containing $\xi$ in $i\mathfrak{g}^*$.

Next, we note $\xi|_{\mathfrak{g}_x}=0$ implies that $\xi$ is on the right hand side of Corollary \ref{L^2cor}; hence, it must also be on the left hand side of Corollary \ref{L^2cor}. In particular, 
$$\xi\in \operatorname{AC}\left(\bigcup_{\substack{\sigma\in \operatorname{supp}L^2(X)\\ \sigma\in \widehat{G}_{\text{temp}}^{\text{\ }\prime}}}\mathcal{O}_{\sigma}\right).$$
Therefore,
$$\mathcal{C}'_X:=\mathcal{C}'\cap \left(\bigcup_{\substack{\sigma\in \operatorname{supp}L^2(X)\\ \sigma\in \widehat{G}_{\text{temp}}^{\text{\ }\prime}}}\mathcal{O}_{\sigma}\right)$$ 
is an unbounded set. However, the map $U\times \mathcal{C}\rightarrow i\mathfrak{g}^*$ is continuous and $U$ is precompact. Hence, 
$$\{(u,\eta)|\ u\in U,\ \eta\in \mathcal{C},\ \text{and}\ \operatorname{Ad}^*(u)\cdot \eta\in \mathcal{C}'_X\}$$
must contain an unbounded subset of $\eta\in \mathcal{C}$. However, since we are intersecting $\mathcal{C}'$ with a $G$ invariant set, the above set is independent of $u\in U$. In particular, $(e,\eta)$ occurs in the set for an unbounded subset of $\eta\in \mathcal{C}$ and 
$$i(\mathfrak{h}^*)'-\operatorname{supp}L^2(X)\cap \mathcal{C}$$
is unbounded. The Lemma follows.
\end{proof}

Next, we give an application of this result. We begin with the case where $\tau$ is an involution of $G$, $G^{\tau}\subset G$ is the symmetric subgroup of fixed points of $\tau$, and $H\subset G^{\tau}$ is a closed, unimodular subgroup. The decomposition of $L^2(G/G^{\tau})$ is already quite well understood. The representations of $G$ occurring discretely in $L^2(G/G^{\tau})$ were classified by Flensted-Jensen and Matsuki-Oshima \cite{FJ80}, \cite{MO84}; see also the exposition of Vogan \cite{Vo88}, which describes these representations in terms that are useful for our purposes. Proofs of the full Plancherel formula were given by Delorme \cite{De98} and van den Ban-Schlichtkrull \cite{BS05a}, \cite{BS05b}. 

Now, suppose $H\subset G^{\tau}$ is a closed, unimodular subgroup. One can ask about the decomposition of $L^2(G/H)$ into irreducibles; in particular, one can ask about the irreducible representations of $G$ occurring discretely in $L^2(G/H)$. Some results in this vein were given by Kobayashi \cite{Ko98}. We give a different yet analogous result.

\begin{corollary} \label{symmetriccor} Suppose $G$ is a real, reductive algebraic group, suppose $\tau$ is an involution of $G$ with fixed point set $G^{\tau}$, and suppose $H\subset G^{\tau}$ is a closed, unimodular subgroup of $G^{\tau}$. Assume that $G$ contains a compact Cartan subgroup $T$ with Lie algebra $\mathfrak{t}$. Then 
$$i(\mathfrak{t}^*)'\cap \operatorname{AC}\left(i(\mathfrak{t}^*)'-\operatorname{supp}L^2(G/G^{\tau})\right) \subset \operatorname{AC}\left(i(\mathfrak{t}^*)'-\operatorname{supp}L^2(G/H)\right).$$
\end{corollary}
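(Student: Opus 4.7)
The plan is to reduce the statement to Corollary~\ref{cartanL^2cor} for $X=G/H$: for each $\xi$ in the left-hand side I will produce a point $x\in G/H$ with $\xi|_{\mathfrak{g}_x}=0$, which immediately places $\xi$ in $\operatorname{AC}(i(\mathfrak{t}^*)'-\operatorname{supp}L^2(G/H))$.

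The key nontrivial input is an \emph{upper bound} on the compact-Cartan tempered support of $L^2(G/G^\tau)$. Since $\xi$ lies in $i(\mathfrak{t}^*)'$, the tempered representations with regular infinitesimal character whose Harish-Chandra parameters sit on $\mathfrak{t}$ are precisely the discrete series of $G$, and those appearing in $\operatorname{supp}L^2(G/G^\tau)$ are the Flensted-Jensen/Matsuki-Oshima discrete series of $G/G^\tau$. The classification of discrete series of semisimple symmetric spaces (applicable here because the existence of a compact Cartan forces the rank condition whenever such discrete series show up) asserts that their Harish-Chandra parameters $\lambda\in i\mathfrak{t}^*$ satisfy the anti-invariance condition $\lambda|_{\mathfrak{t}^\tau}=0$, where $\mathfrak{t}^\tau:=\mathfrak{t}\cap\mathfrak{g}^\tau$. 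Since the definition of $i(\mathfrak{t}^*)'-\operatorname{supp}\pi$ takes a union over $W$-orbits (with $W=N_G(T)/T$), and the set $\bigcup_{w\in W}\{\lambda\in i\mathfrak{t}^*:\lambda|_{w\mathfrak{t}^\tau}=0\}$ is closed and conical, this gives
\[
\operatorname{AC}\bigl(i(\mathfrak{t}^*)'-\operatorname{supp}L^2(G/G^\tau)\bigr)\subset\bigcup_{w\in W}\bigl\{\lambda\in i\mathfrak{t}^*:\lambda|_{w\mathfrak{t}^\tau}=0\bigr\}.
\]

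Next I would choose a Cartan involution $\theta$ of $G$ commuting with $\tau$, which exists by standard arguments on commuting pairs of involutions in reductive groups, and arrange $\mathfrak{t}$ to be $\tau$-stable. The root space decomposition of $\mathfrak{g}$ with respect to $\mathfrak{t}$ is then $\tau$-invariant, so the projection $p\colon\mathfrak{g}\to\mathfrak{t}$ along the root spaces is $\tau$-equivariant; it also commutes with $\operatorname{Ad}(n)$ for $n\in N_G(T)$. In particular $p(\mathfrak{g}^\tau)\subset\mathfrak{t}^\tau$, hence $p(\mathfrak{h})\subset\mathfrak{t}^\tau$ because $\mathfrak{h}\subset\mathfrak{g}^\tau$.

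Given $\xi$ in the left-hand side of Corollary~\ref{symmetriccor}, the first step produces $w\in W$ with $\xi|_{w\mathfrak{t}^\tau}=0$. Let $n\in N_G(T)$ represent $w$ and set $x=nH\in G/H$, so $\mathfrak{g}_x=\operatorname{Ad}(n)\mathfrak{h}$. Extending $\xi$ to $\mathfrak{g}^*$ via $p^*$,
\[
p(\mathfrak{g}_x)=\operatorname{Ad}(n)\,p(\mathfrak{h})\subset\operatorname{Ad}(n)\mathfrak{t}^\tau=w\mathfrak{t}^\tau,
\]
so $\xi(p(Y))=0$ for every $Y\in\mathfrak{g}_x$, i.e.\ $\xi|_{\mathfrak{g}_x}=0$. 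Corollary~\ref{cartanL^2cor} applied to $G/H$ at the point $x$ then places $\xi$ in the right-hand side.

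The hard part is the first step: the machinery developed in this paper produces only lower bounds on tempered support, so controlling the asymptotic cone for $L^2(G/G^\tau)$ from above has to be imported from the Flensted-Jensen/Matsuki-Oshima theory of discrete series for symmetric spaces, which is used here as a black box. Everything after that is essentially linear algebra on the compact Cartan together with a single application of Corollary~\ref{cartanL^2cor}.
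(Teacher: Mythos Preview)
Your approach is essentially the paper's: both import the Flensted--Jensen/Matsuki--Oshima/Vogan description of the discrete spectrum of $G/G^{\tau}$ as a black box to bound the left-hand side from above, and both finish by a single application of Corollary~\ref{cartanL^2cor} to $G/H$. The paper, however, packages the black box more efficiently. It simply asserts that the \emph{converse} of Corollary~\ref{cartanL^2cor} holds for $X=G/G^{\tau}$ on a compact Cartan (this is what can be read off \cite{Vo88}): any $\xi$ in the left-hand side satisfies $\xi|_{\mathfrak{g}_x}=0$ for some $x=gG^{\tau}\in G/G^{\tau}$, i.e.\ $\xi|_{\operatorname{Ad}(g)\mathfrak{g}^{\tau}}=0$. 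Since $\mathfrak{h}\subset\mathfrak{g}^{\tau}$, this immediately gives $\xi|_{\operatorname{Ad}(g)\mathfrak{h}}=0$, and Corollary~\ref{cartanL^2cor} applied at $y=gH\in G/H$ finishes the proof in one line. There is no need to arrange $\mathfrak{t}$ to be $\tau$-stable, no projection $p$ along root spaces, and no restriction to $g\in N_G(T)$; your extra linear algebra just re-derives, in the special case $g\in N_G(T)$, the trivial inclusion $\operatorname{Ad}(g)\mathfrak{h}\subset\operatorname{Ad}(g)\mathfrak{g}^{\tau}$. Your formulation of the black box (parameters vanish on $\mathfrak{t}^{\tau}$ up to $W$) is equivalent, for $\xi\in i\mathfrak{t}^*$, to the paper's ``$\xi|_{\operatorname{Ad}(n)\mathfrak{g}^{\tau}}=0$ for some $n\in N_G(T)$'', which is a slightly sharper claim than the converse of Corollary~\ref{cartanL^2cor} actually requires.
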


This Corollary says that $L^2(G/H)$ has ``asymptotically more'' Harish-Chandra discrete series representations than $L^2(G/G^{\tau})$ when $H\subset G^{\tau}$ is a closed, unimodular subgroup. It follows directly from Lemma \ref{cartanL^2cor} together with the fact that the converse of Lemma \ref{cartanL^2cor} is also true for symmetric spaces in the case of a compact Cartan subalgebra $\mathfrak{t}\subset \mathfrak{g}$ (this can be read off of the description of the discrete spectrum in these symmetric spaces given in \cite{Vo88}). 

Let us look at the example where $G=\operatorname{Sp}(2n,\mathbb{R})$ and $G^{\tau}=\operatorname{GL}(n,\mathbb{R})$. If one chooses a compact Cartan subgroup $T\subset \operatorname{Sp}(2n,\mathbb{R})$, one can see either from Corollary \ref{cartanL^2cor} or work on the Plancherel formula for symmetric spaces (see for instance \cite{Vo88}) that
$$\operatorname{AC}\left(i(\mathfrak{t}^*)'-\operatorname{supp}L^2(\operatorname{Sp}(2n,\mathbb{R})/\operatorname{GL}(n,\mathbb{R}))\right)=i\mathfrak{t}^*.$$
Let $W=N_G(T)/T$ denote the Weyl group of $T$ in $G$. Then we deduce from Corollary \ref{symmetriccor} that whenever $H\subset \operatorname{GL}(n,\mathbb{R})$ is a closed unimodular subgroup and $\mathcal{C}\subset i\mathfrak{t}^*$ is an open cone in $i\mathfrak{t}^*$, then there are infinitely many Harish-Chandra discrete series $J(\Gamma)$ of $\operatorname{Sp}(2n,\mathbb{R})$ for which $(W\cdot d\gamma)\cap \mathcal{C}\neq \{0\}$ and
$$\operatorname{Hom}_{\operatorname{Sp}(2n,\mathbb{R})}(J(\Gamma),L^2(\operatorname{Sp}(2n,\mathbb{R})/H))\neq \{0\}.$$
As remarked in the introduction, one can take 
$$H=\prod_{i=1}^k \operatorname{GL}(p_i,\mathbb{R})\times \prod_{j=1}^l \operatorname{GL}(q_j,\mathbb{Z})$$
whenever $\sum p_i+\sum q_j\leq n$ and one can take $\mathcal{C}$ to be a Weyl chamber in $i\mathfrak{t}^*$.
\bigskip

We conclude this section by remarking that the recent work of Benoist and Kobayashi \cite{BK15} provides a wealth of additional examples where the right hand side of Corollary \ref{L^2cor} is nonempty.




\section{Applications to Restriction Problems}

Next, we consider the case of restriction to a real, reductive algebraic group. Let $G$ be a Lie group with Lie algebra $\mathfrak{g}$, let $H\subset G$ be a closed, real, reductive algebraic subgroup with Lie algebra $\mathfrak{h}$,  and let $\pi$ be a unitary representation of $G$.
We may write 
$$\pi|_H\simeq \int_{J(\Gamma)\in \widehat{H}} J(\Gamma)^{\oplus m(J(\Gamma),\pi|_H)} d\mu_{\Gamma}$$
with $\mu$ a positive measure on $\widehat{H}$. If $\widehat{H}_{\text{temp}}^{\text{\ }\prime}\subset \widehat{H}$ is the open subset of tempered representations with regular infinitesimal character, then we may decompose
$$\mu=\mu|_{\widehat{H}_{\text{temp}}^{\text{\ }\prime}}+\mu|_{\widehat{H}-\widehat{H}_{\text{temp}}^{\text{\ }\prime}}.$$
This gives rise to a decomposition
$$\pi|_H\simeq \int_{J(\Gamma)\in \widehat{H}_{\text{temp}}^{\text{\ }\prime}} J(\Gamma)^{\oplus m(J(\Gamma),\pi|_H)} d\mu|_{\widehat{H}_{\text{temp}}^{\text{\ }\prime}}$$
$$\bigoplus \int_{J(\Gamma)\in \widehat{H}-\widehat{H}_{\text{temp}}^{\text{\ }\prime}} J(\Gamma)^{\oplus m(J(\Gamma),\pi|_H)} d\mu|_{\widehat{H}-\widehat{H}_{\text{temp}}^{\text{\ }\prime}}.$$
Every matrix coefficient of $\pi|_H$ can be decomposed into the sum of a matrix coefficient of the first representation plus a matrix coefficient of the second representation. As in Proposition 1.3 of \cite{How}, we deduce
$$\operatorname{WF}(\pi|_H)$$
is the union of the 
$$\operatorname{WF}\left(\int_{J(\Gamma)\in \widehat{H}_{\text{temp}}^{\text{\ }\prime}} J(\Gamma)^{\oplus m(J(\Gamma),\pi|_H)} d\mu|_{\widehat{H}_{\text{temp}}^{\text{\ }\prime}}\right)$$
and 
$$\operatorname{WF}\left(\int_{J(\Gamma)\in \widehat{H}-\widehat{H}_{\text{temp}}^{\text{\ }\prime}} J(\Gamma)^{\oplus m(J(\Gamma),\pi|_H)} d\mu|_{\widehat{H}-\widehat{H}_{\text{temp}}^{\text{\ }\prime}}\right).$$
The same goes for the singular spectrum of $\pi|_H$. Let 
$$q\colon i\mathfrak{g}^*\rightarrow i\mathfrak{h}^*$$
be the pullback of the inclusion map. We know from Proposition 1.5 of \cite{How} that 
$$\operatorname{WF}(\pi|_H)\supset \overline{q(\operatorname{WF}(\pi))}.$$
One checks that the proof of Proposition 1.5 of \cite{How} also yields the analogous statement for the singular spectrum. Now, by Theorem 1.1 of this paper, we deduce
$$\operatorname{WF}\left(\int_{J(\Gamma)\in \widehat{H}-\widehat{H}_{\text{temp}}^{\text{\ }\prime}} J(\Gamma)^{\oplus m(J(\Gamma),\pi|_H)} d\mu|_{\widehat{H}-\widehat{H}_{\text{temp}}^{\text{\ }\prime}}\right)\subset i\mathfrak{h}_s^*.$$
Again, the analogous statement holds for the singular spectrum as well. Thus, we deduce
$$\operatorname{WF}\left(\int_{J(\Gamma)\in \widehat{H}_{\text{temp}}^{\text{\ }\prime}} J(\Gamma)^{\oplus m(J(\Gamma),\pi|_H)} d\mu|_{\widehat{H}_{\text{temp}}^{\text{\ }\prime}}\right)\supset \overline{q(\operatorname{WF}(\pi|_H))}\cap i(\mathfrak{h}^*)'.$$
Again, this result still holds if we replace $\operatorname{WF}$ with $\operatorname{SS}$ everywhere. Finally, we apply Theorem 1.2 of \cite{HHO} to deduce
$$\operatorname{AC}\left(\bigcup_{\substack{\sigma\in \operatorname{supp}\pi|_H\\ \sigma\in \widehat{H}_{\text{temp}}^{\text{\ }\prime}}}\mathcal{O}_{\sigma}\right)\supset \overline{q(\operatorname{WF}(\pi|_H))}\cap i(\mathfrak{h}^*)'.$$
Since Theorem 1.2 of \cite{HHO} is also stated for the singular spectrum, we deduce the identical statement with $\operatorname{WF}$ replaced by $\operatorname{SS}$, which is the statement of Corollary \ref{restcor}.
\bigskip

Next, we consider the example noted in the introduction. Let $G=\operatorname{GL}(2n,\mathbb{R})$, let $H=\operatorname{SO}(n,n)$, and let $\pi$ be a Stein complementary series representation of $G$ (see \cite{St67} for the original definition of Stein complementary series; see \cite{Vo86} to understand how they fit into the unitary dual of $\operatorname{GL}(2n,\mathbb{R})$). Now, $\pi$ is (non-unitarily) parabolically induced from a parabolic subgroup with $\operatorname{GL}(n,\mathbb{R})\times \operatorname{GL}(n,\mathbb{R})$ as its Levi factor. Let $\mathfrak{p}$ denote the Lie algebra of this parabolic subgroup. Combining work of Barbasch-Vogan and Rossmann \cite{BV}, \cite{Ro95}, we check that $\operatorname{WF}(\pi)\supset i(\mathfrak{g}/\mathfrak{p})^*$.

Thus, to check $\overline{q(\operatorname{WF}(\pi))}=i\mathfrak{h}^*$, we need only check 
$$\overline{q(\operatorname{Ad}^*(G)\cdot i(\mathfrak{g}/\mathfrak{p})^*)}=i\mathfrak{h}^*.$$ To do this, it is enough to fix a standard set of representatives of the conjugacy classes of Cartan subalgebras of $\mathfrak{h}$, say $\{\mathfrak{b}_{\alpha}\}_{\alpha\in \mathcal{A}}$, and show that $$q(\operatorname{Ad}^*(G)\cdot i(\mathfrak{g}/\mathfrak{p})^*)\supset i\mathfrak{b}_{\alpha}^*$$ for every $\alpha$. One can use the set of representatives of conjugacy classes of Cartan subalgebras given in \cite{Su}. In addition, after identifying $\mathfrak{g}\cong i\mathfrak{g}^*$ equivariantly, one can take conjugates by permutation matrices of matrices consisting of scalar multiples of Jordan blocks which are also conjugate to matrices in $i(\mathfrak{g}/\mathfrak{p})^*$. One checks that one can obtain all of $i\mathfrak{b}_{\alpha}^*$ for every $\alpha\in \mathcal{A}$ by projecting these special matrices. We leave the details as an exercise for the reader.

Then by Corollary \ref{restcor}, we deduce 
$$\operatorname{AC}\left(i(\mathfrak{b}_{\alpha}^*)'-\operatorname{supp}\pi|_H\right)=i\mathfrak{b}_{\alpha}^*$$
for every $\alpha$. This justifies the claim that the tempered part of the decomposition of $\pi|_{\operatorname{SO}(n,n)}$ into irreducibles is ``asymptotically dense'' in $\widehat{H}_{\text{temp}}$. If $n$ is even and one takes the compact Cartan of $\operatorname{SO}(n,n)$, then one deduces the existence of infinitely many Harish-Chandra discrete series $\sigma$ of $\operatorname{SO}(n,n)$ for which
$$\operatorname{Hom}_{\operatorname{SO}(n,n)}(\sigma,\pi|_{\operatorname{SO}(n,n)})\neq \{0\}.$$

\section{Acknowledgements}
The author would like to thank Hongyu He and Gestur \'{O}lafsson, his postdoctoral advisers at Louisiana State University. Hongyu and Gestur taught the author many things during his time in Baton Rouge. In particular, the first paper in this series was coauthored with the two of them \cite{HHO}. 

Second, the author would like to thank Toshiyuki Kobayashi for several discussions on the relationships between his excellent papers \cite{BK15}, \cite{Ko98} and this article. Toshi's insights are a continual inspiration to the author.

Third, the author would like to thank David Vogan for answering a question about the unitary dual of a real, reductive algebraic group.

\bibliographystyle{amsalpha}
\bibliography{WFSetsII}

\end{document}